\def \E{\mathbb{E}}
\def \N{\mathbb{N}}
\def \P{\mathbb{P}}
\def \Q{\mathbb{Q}}
\def \R{\mathbb{R}}
\def \Pc{\mathcal{P}}
\def \Jc{\mathcal{J}}
\def \Lc{\mathcal{L}}
\def \Ic{\mathcal{I}}
\def \Dc{\mathcal{D}}
\def \cD{\mathcal{D}}
\def \cF{\mathcal{F}}
\def \cW{\mathcal{W}}
\def \Wc{\mathcal{W}}
\def \cB{\mathcal{B}}
\newcommand{\Div}{{\operatorname{div}}}
\newcommand{\KL}{D_{\operatorname{KL}}}
\DeclareMathOperator*{\argmin}{arg\,min}
\DeclareMathOperator*{\esssup}{ess\,sup}
\newtheorem{remark}{Remark}[section]
\newtheorem{lemma}{Lemma}[section]
\newtheorem{theorem}{Theorem}[section]
\newtheorem{definition}{Definition}[section]
\newtheorem{proposition}{Proposition}[section]
\newtheorem{corollary}{Corollary}[section]
\numberwithin{equation}{section}
\title{Mean-Field Langevin Diffusions with Density-dependent Temperature\thanks{The authors would like to thank Chii-Ruey Hwang, Yuming Paul Zhang, and Xunyu Zhou for fruitful discussions.}}
\author{Yu-Jui Huang\thanks{
Department of Applied Mathematics, University of Colorado, Boulder, CO 80309-0526, USA, email: \texttt{yujui.huang@colorado.edu}. Partially supported by National Science Foundation (DMS-2109002).
}
\and
Zachariah Malik\thanks{
Department of Applied Mathematics, University of Colorado, Boulder, CO 80309-0526, USA, email: \texttt{zachariah.malik@colorado.edu}.
}
}
\begin{document}
\maketitle

\begin{abstract}
In the context of non-convex optimization, we let the temperature of a Langevin diffusion to depend on the diffusion's own density function. The rationale is that the induced density captures to some extent the landscape imposed by the non-convex function to be minimized, such that a density-dependent temperature provides location-wise random perturbation that may better react to, for instance, the location and depth of local minimizers. As the Langevin dynamics is now self-regulated by its own density, it forms a mean-field stochastic differential equation (SDE) of the Nemytskii type, distinct from the standard McKean-Vlasov equations. 
Relying on Wasserstein subdifferential calculus, we first show that the corresponding (nonlinear) Fokker-Planck equation has a unique solution. Next, a weak solution to the SDE is constructed from the solution to the Fokker-Planck equation, by Trevisan's superposition principle. As time goes to infinity, we further show that the induced density converges to an invariant distribution, which admits an explicit formula in terms of the Lambert $W$ function. A numerical example suggests that the density-dependent temperature can simultaneously improve the accuracy of and rate of convergence to the estimate of global minimizers. 
\end{abstract}

\textbf{MSC (2020):} 
60J60,  	
60H10, 
90C26 
\smallskip

\textbf{Keywords:} non-convex optimization, Langevin diffusions, McKean-Vlasov SDEs of the {Nemytskii} type, nonlinear Fokker-Planck equations, Wasserstein subdifferential calculus

\section{Introduction}\label{sec:intro}
When it comes to minimizing a general (non-convex) $\Psi: \R^d \to \R$, significant developments have long  centered around the Langevin stochastic differential equation (SDE)
\begin{equation}\label{Langevin}
dY_t = -\nabla\Psi(Y_t)\, dt+\sqrt{2\lambda}\, dB_t,\quad Y_0 = y\in\R^d, 
\end{equation}
where $\lambda>0$ is given and $B$ is a $d$-dimensional Brownian motion. The intuition behind is straightforward: as gradient descent ``$-\nabla\Psi(Y_t)\, dt$'' alone will trap $Y_t$ at local minimizers of $\Psi$, random perturbation ``$\sqrt{2\lambda}\, dB_t$'' is added to help $Y_t$ escape from local minimizers. 
Early studies reveal that \eqref{Langevin} has a unique limiting distribution (as $t\to\infty$), which increasingly concentrates on global minimizers of $\Psi$ when $\lambda>0$ (called the {\it temperature} parameter) tends to zero; see \cite{GH86, CHS87, Hwang80}. 



Ideally, the temperature $\lambda$ should evolve over time to control the magnitude of random perturbation strategically. This has been implemented along two perspectives. The first one takes $\lambda=\lambda(t)$, a non-increasing deterministic function in time, following the intuition that less random perturbation is needed over time (as $Y_t$ should gradually move near global minimizers of $\Psi$). The $\lambda(t)$ is usually exogenously given; see e.g., \cite{GH86, CHS87, HKS89, GM91, Marquez97}. 
An exception is \cite{MN01}, where desired long-time behavior of $Y_t$ is imposed to inversely identify an ordinary differential equation for $\lambda(t)$. The second perspective takes $\lambda$ to be a function on $\R^d$, so that $\lambda=\lambda(Y_t)$ in \eqref{Langevin}; namely, the temperature evolves {\it stochastically}, depending on the present state $Y_t$. This can potentially better react to the actual landscape imposed by $\Psi$, as recently argued in \cite[Section 1]{GXZ22}: 
\begin{quotation}
{\it ``For instance, in order to quickly escape a deeper local minimum, one should impose a higher temperature on the process. On the other hand, only lower temperatures are needed when the current process is not trapped in local minima...''}
\end{quotation}
State-dependent temperature has not been studied much. It is exogenously given as $\lambda=f(\Psi)$ in \cite{FQG97}, for an increasing $f\ge 0$.
The recent work  \cite{GXZ22} endogenously designs $y\mapsto\lambda(y)$, by solving a related entropy-regularized stochastic control problem (as a proxy for $\min_{y\in\R^d}\Psi(y)$). 

This paper proposes a new perspective---taking the temperature $\lambda$ to be {\it distribution-dependent}. 
Let $\rho^{Y_t}$ denote the probability density function of $Y_t$ for all $t\ge0$. 
We propose to consider 
\begin{equation}\label{Langevin distri.}
dY_t = -\nabla\Psi(Y_t)\, dt+f\left(\rho^{Y_t}(Y_t)\right)\, dB_t,\quad \rho^{Y_0} = \rho_0, 
\end{equation}
where $\rho_0$ is an initial density and $f:[0,\infty)\to [0,\infty)$ is an increasing function of choice. Imagine that at time 0, numerous points $y\in\R^d$ are sampled from $\rho_0$. As time passes by, they are drawn closer to local minimizers of $\Psi$ by gradient descent ``$-\nabla\Psi(Y_t)\, dt$'', such that the density $\rho^{Y_t}$ tends to be large near local minimizers and small otherwise. Hence, for $Y_t = y\in\R^d$ near a local minimizer, with $\rho^{Y_t}(y)$ large and $f$ increasing, ``$f(\rho^{Y_t}(y))\, dB_t$'' can perturb $y$ very forcefully, helping it escape  effectively from the local minimizer; for $Y_t=y\in\R^d$ far away from local minimizers, with $\rho^{Y_t}(y)$ small and $f$ increasing, ``$f(\rho^{Y_t}(y))\, dB_t$'' may perturb $y$ only slightly, letting ``$-\nabla\Psi(y)dt$'' take command. In other words, the density $y\mapsto\rho^{Y_t}(y)$ naturally gives a sketch of the landscape (i.e., the location and depth of local minimizers), so that \eqref{Langevin distri.} may exhibit the desirable properties as quoted above from \cite[Section 1]{GXZ22}. For concreteness, this paper focuses on the increasing function $f(s) := \sqrt{2\lambda + 2\eta s^{m-1}}$, $s\ge 0$, for some fixed $\lambda,\eta>0$ and $m>1$. That is, \eqref{Langevin distri.} takes the form 
\begin{equation} \label{SDE mixed}
    dY_{t} = - \nabla \Psi(Y_{t}) \, dt + \sqrt{{ 2\lambda} + { 2\eta} \big(\rho^{Y_{t}}(Y_{t})\big)^{{m-1}}} \, dB_{t}, \quad \rho^{Y_{0}} = \rho_{0}. 
\end{equation}
This $f$ is chosen for a purpose: SDE \eqref{SDE mixed}, which is ``perturbed gradient descent'' for the non-convex $\Psi$ on $\R^d$, can be shown to be equivalent to ``gradient descent'' for the convex functional 
\begin{equation} \label{J}
    J_{\lambda, \eta}(\rho) :=
        \int_{\mathbb{R}^{d}} \Psi(y)\rho(y) + F(\rho(y))\, dy,\quad \rho\in\mathcal{D}_2(\mathbb{R}^{d}),
\end{equation}
where $F(s) := \lambda s\ln s+\frac{\eta}{m-1} s^m$ is convex on $[0,\infty)$ and $\Dc_2(\R^d)$ is the set of probability density functions with finite second moments; see Section~\ref{subsec:motivation} for details.  

The goal of this paper is to establish: (i) the existence of a unique solution $Y$ to SDE \eqref{SDE mixed}; (ii) the convergence of $Y_t$ to an invariant distribution as $t\to\infty$. The major challenge is that \eqref{SDE mixed} is not a standard McKean-Vlasov SDE, but one of the so-called {\it Nemytskii} type. 

In the vast literature of McKean-Vlasov SDEs (i.e., $dY_t = b(t,Y_t,\mu^{Y_t})\, dt + \sigma(t,Y_t,\mu^{Y_t})\, dB_t$, where $\mu^{Y_t}$ is the law of $Y_t$), the existence of a unique solution mostly requires (at least) continuity of $b,\sigma$ in the variables $y$ and $\mu$. However, when the dependence of $b,\sigma$ on $(y,\mu)$ is {\it mixed} through the density function $d\mu/d\Lc^d$ (where $\Lc^d$ is the Lebesgue measure on $\R^d$) and its value at $y$, i.e., 
\[
b(t,y,\mu) = \tilde b\Big(t,y,\frac{d\mu}{d\Lc^d}(y)\Big)\quad\hbox{and}\quad \sigma(t,y,\mu) = \tilde \sigma\Big(t,y,\frac{d\mu}{d\Lc^d}(y)\Big),
\]
there is in general no continuity of $b,\sigma$ in the $\mu$ variable; see \cite[p. 1904]{BR20}. Standard results and estimates, therefore, cannot be applied to this irregular class of SDEs, called McKean-Vlasov SDEs of the {Nemytskii} type. An early investigation \cite{JM98} obtains existence and uniqueness results when $\tilde b$ and $\tilde \sigma$ only depend on $({d\mu}/{d\Lc^d})(y)$. Fairly general $\tilde b$ has been treated lately, when $\tilde\sigma$ is independent of $({d\mu}/{d\Lc^d})(y)$; see e.g., \cite{HRZ21, IR23, Wang23, Le24}. As such, these studies do not readily cover \eqref{SDE mixed}. Aiming for a general theory that applies to {\it any} {Nemytskii}-type SDE, \cite{BR20} proposes that one first find a solution $u$ to the Fokker-Planck equation induced by the SDE: by rewriting the Fokker-Planck equation, usually highly nonlinear, as a Cauchy problem in $L^1(\R^d)$, the Crandall-Liggett theorem can be applied to exponentiate the Cauchy operator on $L^1(\R^d)$ via a careful discretization, which yields a solution to the Fokker-Planck equation; next, one can build from $u$ a solution $Y$ to the SDE by Trevisan's superposition principle \cite{Trevisan16}. Analyzing the $L^1(\R^d)$-discretization further shows that $Y_t$ converges to an invariant distribution as $t\to\infty$. See e.g., \cite{BR21, BR23, BR23-evolution, RR23} and the monograph \cite{BR-book-24} for this emerging new theory, and \cite{Huang23, Huang24} for its applications in machine learning.

Several boundedness assumptions, however, crucially underlie this emerging new theory. When applied to SDE \eqref{SDE mixed}, they turn into less desirable conditions, including $|\nabla\Psi| \in L^\infty(\R^d)$ and $(f^2)'\in L^\infty(\R^d)$. The former is quite restrictive for the non-convex $\Psi$ to be minimized and the latter simply fails under $f(s) = \sqrt{2\lambda + 2\eta s^{m-1}}$ in our case. 

In this paper, we instead approach the {Nemytskii}-type SDEs by the Wasserstein gradient flow theory, {\it without} relying on any boundedness condition. By the general existence of ``relaxed gradient flows'' in $\Pc_2(\R^d)$, the set of probability measures with finite second moments, for a broad class of the {Nemytskii}-type SDEs, the associated (nonlinear) Fokker-Planck equation has a solution; see \cite[Theorem 11.1.6 and Example 11.1.9]{Ambrosio08}. To apply this to SDE \eqref{SDE mixed}, no boundedness condition is required and we only need $F(s)$ in \eqref{J} to satisfy a doubling condition (Definition~\ref{def:doubling}). This is established in Lemma~\ref{lem:doubling}, which then gives a solution $\rho:[0,\infty)\to\Dc_2(\R^d)$ to the Fokker-Planck equation induced by \eqref{SDE mixed}; see Theorem~\ref{thm:sol. to FPE}. This solution $\rho$ is unique among a general class of curves in $\Dc_2(\R^d)$, as long as $\nabla\Psi$ is Lipschitz and has linear growth; see Theorem~\ref{thm:uniqueness FPE}. 

Note that this curve $\rho$ is constructed from a discretization called the ``minimizing movement'' scheme. Intriguingly, the aforementioned $L^1(\R^d)$-discretization fundamentally differs from this scheme, although they both yield a solution to the Fokker-Planck equation. The former directly discretizes the Fokker-Planck equation (as a differential equation in $L^1(\R^d)$); the latter by design reduces the value of $J_{\lambda,\eta}$ recursively in $\Dc_2(\R^d)$, and the Fokker-Planck equation only shows up as a by-product. It turns out that the minimizing movement scheme has several advantages.

First, a careful analysis of this scheme 
shows that $\{\rho(t)\}_{t\in[0,T]}$ is bounded in $L^m(\R^d)$ for any $T>0$; see Proposition~\ref{prop:L^m norm bdd}. This allows the use of Figalli-Trevisan's superposition principle in \cite{BRS21} (an upgrade of the one in \cite{Trevisan16}) to build a solution $Y$ to \eqref{SDE mixed} with $\rho^{Y_t} = \rho(t)$ for all $t\ge 0$; see Theorem~\ref{thm:sol. to Y}. In fact, such boundedness in $L^m(\R^d)$ characterizes the time-marginal laws of solutions to \eqref{SDE mixed}: for any solution $Y$ to \eqref{SDE mixed} such that $\{\rho^{Y_t}\}_{t\ge 0}$ fulfills this condition, $\rho^{Y_t}=\rho(t)$ for all $t\ge 0$; see Theorem~\ref{thm:uniqueness Y}. 
Second, as the minimizing movement scheme reduces the $J_{\lambda,\eta}$ value recursively, we quickly obtain ``$t\mapsto J_{\lambda,\eta}(\rho(t))$ is non-increasing'' in the limit; see Lemma~\ref{lem:J decreasing}. That is, we find that $J_{\lambda,\eta}$ is a Lyapunov function for the density flow $\{\rho(t)\}_{t\ge 0}$ almost {\it for free}, as opposed to the delicate work needed in \cite{BR23-evolution}. The monotonicity of $t\mapsto J_{\lambda,\eta}(\rho(t))$ directly implies that it is differentiable for a.e.\ $t>0$ and fulfills a variational inequality; 
see Proposition~\ref{prop:J'(t)}.

All the developments lead to our ultimate convergence result: the time-marginal law of a solution $Y$ to SDE \eqref{SDE mixed} (i.e., $\rho^{Y_t}\Lc^d = \rho(t)\Lc^d$) converges weakly to $\rho_\infty\Lc^d$ as $t\to\infty$, where $\rho_\infty$ is the unique minimizer of $J_{\lambda,\eta}$; see Theorem~\ref{thm:ultimate result}. 
In a nutshell, as $t \mapsto J_{\lambda,\eta}(\rho(t))$ is non-increasing and  bounded from below by $J_{\lambda,\eta}(\rho_\infty)$, we deduce that (a) $\{\rho(t)\Lc^d\}_{t\ge0}$ is tight and (b) $\frac{d}{dt} J_{\lambda,\eta}(\rho(t))\to 0$ as $t\to\infty$. Condition (b) facilitates a detailed analysis of $\{\rho(t)\Lc^d\}_{t\ge 0}$, relying on subdifferential calculus in $\Pc_2(\R^d)$ and compactness for functions of bounded variation, which reveals that its weak limit (as $t\to\infty$) must be $\rho_\infty\Lc^d$; see the proof of Theorem~\ref{thm:convergence to rho_infty} for details.

In a simple numerical example, we find that tuning $m>1$ in \eqref{SDE mixed} can {\it simultaneously} improve (i) the accuracy of the estimate of global minimizers and (ii) the rate of convergence to that estimate. This is in contrast to the Langevin dynamics \eqref{Langevin}, under which there is a tradeoff between (i) and (ii); 
see Section~\ref{sec:example} for details. 

Upon finishing this paper, we learned that \cite{BMV24} recently also approached minimizing $\Psi: M\to \R$ through a similar angle. There, the authors also consider a functional like \eqref{J}, but accommodate any convex $F$ satisfying a general condition and develop a gradient flow analysis for any such an $F$. The standing assumption in \cite{BMV24} is that the domain $M$ is compact and the main convergence result is established when $M$ is one-dimensional. By contrast, the general domain $\R^d$ is considered in our paper. Our convergence result holds without any compactness or dimensional constraint on the domain, although it is established under a specific $F$ specified below \eqref{J}. 

Considering the whole domain $\R^d$ also contributes to the literature of porous medium equations (PME). Note that the Fokker-Planck equation induced by \eqref{SDE mixed} can be viewed as a generalized PME with a drift. When the domain of this PME is bounded, classical studies \cite{DiB82, BH86} show the existence of a unique solution and its long-time convergence to equilibrium. Whether the same holds on the whole domain $\R^d$ is not fully understood. By extending the arguments in \cite{BH86} to $\R^d$, \cite{Carrillo01} constructs a solution on $\R^d$ (without looking into uniqueness) and proves its long-time convergence only when $\Psi$ is uniformly convex. Our study complements this by providing desired existence, uniqueness, and long-time convergence results on $\R^d$ without any convexity of $\Psi$. 

This paper is organized as follows. Section~\ref{sec:setup} motivates the SDE \eqref{SDE mixed}. Section~\ref{sec:subdifferential} obtains characterizations for subdifferentials of functions on $\Pc_2(\R^d)$. Section~\ref{sec:sol. to FPE} shows that the associated nonlinear Fokker-Planck equation has a unique solution, which is used to construct a solution $Y$ to \eqref{SDE mixed} in Section~\ref{sec:sol. to Y}. Section~\ref{sec:minimizer} derives the unique minimizer of $J_{\lambda,\eta}$ in \eqref{J}. Section~\ref{sec:convergence} proves the long-time convergence of $Y_t$ to the minimizer of $J_{\lambda,\eta}$. Section~\ref{sec:example} presents a numerical example. 


\section{The Setup}\label{sec:setup}
For any Polish space $S$, let $\mathcal B(S)$ be the Borel $\sigma$-algebra of $S$ and $\mathcal{P}(S)$ be the set of probability measures on $(S,\cB(S))$. Given Polish spaces $S_1$ and $S_2$, $\mu\in \mathcal P(S_1)$, and a Borel $\bm t:S_1\to S_2$, we define $\bm t_\# \mu\in\mathcal P(S_2)$, the pushforward of $\mu$ through $\bm t$, by $\bm t_\# \mu(B) := \mu(\bm t^{-1} (B))$ for all $B\in\mathcal B(S_2)$.
For $S=\R^d$ with $d\in\N$, we consider
$
\mathcal{P}_{2}(\mathbb{R}^{d}) := \big\{ \mu \in \mathcal{P}(\mathbb{R}^{d}) : {\int_{\R^d} |y|^2 d\mu(y) < \infty} \big\}
$
and equip it with the Wasserstein-2 metric, i.e.,  
${\cW_{2}(\mu, \nu)} := \inf_{\gamma \in \Gamma(\mu,\nu)} \big( \int_{\mathbb{R}^{d} \times \mathbb{R}^{d}} |x - y|^{2} \, d\gamma(x, y) \big)^{1/2}$ for $\mu,\nu\in \mathcal{P}_{2}(\mathbb{R}^{d})$, 
where $\Gamma(\mu,\nu)$ is the set of all $\gamma\in\mathcal P(\R^d\times\R^d)$ with marginals $\mu$ and $\nu$, which we call {\it transport plans} from $\mu$ to $\nu$. If a transport plan $\gamma^*\in \Gamma(\mu,\nu)$ attains the infimum in $\cW_{2}(\mu, \nu)$, we say it is {\it optimal}. 
We also consider the set $\cD(\R^d)$ of probability density functions on $\R^d$ and
$
\cD_2(\R^d) := \big\{ \rho \in \mathcal{D}(\mathbb{R}^{d}) : {\int_{\R^d} |y|^2 \rho(y)dy < \infty} \big\}. 
$
For any $\rho\in \mathcal D_2(\R^d)$, we write $\rho \mathcal L^d$ for the induced probability measure on $\R^d$ (which clearly lies in $\mathcal P_2(\R^d)$), where $\mathcal L^d$ stands for the Lebesgue measure on $\R^d$. 

\begin{remark}\label{rem:bm t}
Fix $\mu,\nu\in \Pc_2(\R^d)$. By \cite[Theorem 6.2.4]{Ambrosio08}, if $\mu\ll\Lc^d$, 
a unique optimal transport plan $\gamma^*\in\Gamma(\mu, \nu)$ exists 
and takes the form
$
\gamma^* = (\bm i\times \bm t_\mu^\nu)_\# \mu,
$
where $\bm i$ is the identity map on $\R^d$ and $\bm t_\mu^\nu:\R^d\to\R^d$ 
is Borel and satisfies ${\bm t_\mu^\nu}_\# \mu =\nu$  
(called the optimal transport map from $\mu$ to $\nu$). If $\nu\ll\Lc^d$ also holds, $\bm t_\mu^\nu$ is $\mu$-essentially injective and $\bm t_\nu^\mu \circ \bm t_\mu^\nu = \bm i$ $\mu$-a.e.; see \cite[Remark 6.2.11]{Ambrosio08}.  
\end{remark}


\subsection{Motivation and Objectives}\label{subsec:motivation}
In this paper, we consider minimizing a general $\Psi:\R^d\to\R$, assumed to be nonnegative and lie in $W^{1,1}_{\operatorname{loc}}(\R^d)$. As stated in Section~\ref{sec:intro}, the Langevin SDE \eqref{Langevin} converges as $t\to\infty$ 
to the distribution
\begin{equation}\label{rholambda}
\rho_\lambda(y) := Z_\lambda^{-1} {e^{-\frac1\lambda \Psi(y)}} 
\quad \ y\in\R^d,
\quad \hbox{with}\ \ Z_\lambda:=\int_{\R^d} e^{-\frac1\lambda \Psi(z)}dz,
\end{equation}
which centers around global minimizers of $\Psi$. That is, the Brownian perturbation $\sqrt{2\lambda}\, dB_t$ in \eqref{Langevin} is {\it well-balanced}---strong enough for $Y_t$ to escape from local minimizers and weak enough for $Y_t$ to admit a limiting distribution. It is natural to probe for the fundamental reason.

First, we note that $\rho_\lambda$ solves a convex optimization problem in $\Dc(\R^d)$. For any $\lambda>0$, 
\begin{equation}\label{G}
G_\lambda(\rho) := \int_{\R^d} \Psi(y)\rho(y)dy +\lambda\int_{\R^d}\rho(y) \ln\rho(y) dy, 
\qquad 
\rho\in\mathcal{D}(\R^d),
\end{equation}
is strictly convex on $\Dc(\R^d)$, as $\int_{\R^d} \Psi(y)\rho(y)dy$ and $\int_{\R^d}\rho(y) \ln\rho(y) dy$ are linear and strictly convex, respectively, in $\rho\in\Dc(\R^d)$. 
It is known that $\rho_\lambda$ in \eqref{rholambda} is the unique minimizer of $G_\lambda$; see e.g., \cite{JK96}. Namely, what minimizes the convex $G_\lambda:\mathcal{D}(\R^d)\to\R$ is the limiting distribution of \eqref{Langevin}. 
What underlies this coincidence can be made clear by Wasserstein subdifferential calculus.  

To see this, let us focus on $\Dc_2(\R^d)\subseteq \Dc(\R^d)$. By \cite[Lemma 10.4.1]{Ambrosio08}, under sufficient regularity, there is a unique subgradient of $G_\lambda$ at each $\rho\in\mathcal{D}_2(\R^d)$, which is a vector field given by $\nabla\Psi+\lambda {\nabla\rho}/{\rho}:\R^d\to\R^d$. Hence, the gradient-descent ODE for minimizing $G_\lambda$ can be formulated as
\begin{equation}\label{ODE'}
dY_t = -\left(\nabla \Psi(Y_t) + \lambda {\nabla\rho^{Y_t}(Y_t)}/{\rho^{Y_t}(Y_t)} \right)dt,\qquad \rho^{Y_0} =\rho_0\in\mathcal{D}_2(\R^d),
\end{equation}
where $\rho^{Y_t}$ denotes the density of $Y_t$. Here, $Y_0$ is sampled from a density $\rho_0$. Such randomness trickles through the deterministic dynamics in \eqref{ODE'}, making $Y_t$ a random variable at each $t> 0$. The {\it continuity equation} (or, {\it first-order Fokker-Planck equation}) 
for 
$u(t,y) := \rho^{Y_t}(y)$ 
is then
\begin{align}
u_t &= \text{div}\left(\left(\nabla \Psi + \lambda {\nabla u}/{u} \right) u \right)
=  \text{div}(u \nabla \Psi) + \lambda \Delta u. \label{FP}
\end{align}
Crucially, \eqref{FP} coincides with the {Fokker-Planck equation} for the Langevin SDE \eqref{Langevin}. 
It follows that 
``perturbed gradient descent \eqref{Langevin}'' for the non-convex $\Psi$ on $\R^d$ is equivalent to ``gradient descent \eqref{ODE'}'' for the convex $G_\lambda$ on $\mathcal{D}_2(\R^d)$, in the sense that they share the same Fokker-Planck equation (and thus both converge to the limiting distribution $\rho_\lambda$).  
This has intriguing implications: if we replace $\int_{\R^d}\rho(y) \ln\rho(y) dy$ in \eqref{G} by other convex functionals, $\sqrt{2\lambda}\, dB_t$ in \eqref{Langevin}, which stems from $\int_{\R^d}\rho(y) \ln\rho(y) dy$  through \eqref{ODE'}-\eqref{FP}, may change into new kinds of random perturbation. 

For instance, when $\int_{\R^d}\rho(y) \ln\rho(y) dy$ in \eqref{G} is replaced by $\int_{\R^d} \frac{\rho^m(y)}{m-1}dy$ with $m> 1$, by \cite[Lemma 10.4.1]{Ambrosio08} again, under sufficient regularity, there is a unique subgradient of $G_\lambda$ at each $\rho\in\mathcal{D}_2(\R^d)$, given by $\nabla\Psi+\lambda m \rho^{m-2} {\nabla\rho}:\R^d\to\R^d$. The resulting gradient-descent ODE is then
$dY_t = -\left(\nabla \Psi(Y_t) + \lambda m (\rho^{Y_t}(Y_t))^{m-2}  {\nabla\rho^{Y_t}(Y_t)} \right)dt$, $\rho^{Y_0} =\rho_0\in\mathcal{D}_2(\R^d)$. 
The continuity equation for $u(t,y) := \rho^{Y_t}(y)$ accordingly becomes
$
u_t =  \text{div}((\nabla \Psi + \lambda m u^{m-2}\nabla u)u )=  \text{div}(u \nabla \Psi) + \lambda \Delta u^m,
$
which is a {\it porous medium equation} (PME) with a drift. As it can be rewritten as $u_t = \text{div}(u \nabla \Psi) + \frac12\sum_{i=1}^d \frac{\partial^2}{\partial y_i^2} ((2\lambda u^{m-1}) u)$, 
the corresponding SDE is
\begin{equation}\label{PME SDE}
dY_t = -\nabla\Psi(Y_t)dt+\sqrt{2\lambda  (\rho^{Y_t}(Y_t))^{m-1}}\, dB_t,\quad \rho^{Y_0} = \rho_0\in\mathcal{D}_2(\R^d).
\end{equation}
As discussed below \eqref{Langevin distri.}, $\rho^{Y_t}(\cdot)$ here serves to give a sketch of the landscape imposed by $\Psi$ (i.e., the location and depth of local minimizers), so that the perturbation ``$\sqrt{2\lambda  (\rho^{Y_t}(Y_t))^{m-1}}\, dB_t$'' tends to be large near local minimizers and small otherwise. However, as PMEs are known to induce only finite propagation of mass (see \cite[Section 1]{Vazquez15}), it is likely that once ``$\sqrt{2\lambda  (\rho^{Y_t}(Y_t))^{m-1}}\, dB_t$'' forcefully pulls $Y_t =y$ away from a local minimizer, the perturbation wanes so quickly that $Y$ cannot move further away and is drawn back by ``$-\nabla\Psi(Y_t)dt$'' to the original local minimizer. 

In view of this, we propose to consider \eqref{SDE mixed}, a mixture of \eqref{Langevin} and \eqref{PME SDE}, where ``${2 \lambda}$'' provides {\it baseline} perturbation applied uniformly over $\R^d$ and ``$2\eta (\rho^{Y_t}(Y_t))^{m-1}$'' generates {\it location-wise} perturbation depending on the local landscape. By the same analysis in \eqref{G}-\eqref{FP}, ``perturbed gradient descent \eqref{SDE mixed}'' for the non-convex $\Psi$ on $\R^d$ is equivalent to ``gradient descent'' for the convex functional $J_{\lambda,\eta}$ on $\mathcal{D}_2(\R^d)$ in \eqref{J}, as they share the Fokker-Planck equation
    \begin{equation} \label{FPE mixed}
        \partial_{t} u - \Div\big(u \nabla \Psi  + \nabla \left(  \lambda u + \eta u^{m}  \right) \big) = 0,\quad u(0,\cdot) = \rho_0. 
    \end{equation}

This paper aims to: (i) show that 
\eqref{FPE mixed} has a unique solution $\rho$ (Section~\ref{sec:sol. to FPE}); (ii) use the superposition principle in \cite{Trevisan16, BRS21} to construct from $\rho$ a unique solution $Y$ to SDE \eqref{SDE mixed} (Section~\ref{sec:sol. to Y}); (iii) establish the long-time convergence of $Y_t$ to the minimizer of $J_{\lambda,\eta}$ (Sections~\ref{sec:minimizer} and \ref{sec:convergence}). To achieve these goals, we need to first characterize the subdifferentials of functions on $\Pc_2(\R^d)$. 
 

\section{Preliminaries: Subdifferentials in $\Pc_2(\R^d)$}\label{sec:subdifferential}
The next definition is in line with \cite[Definition 10.1.1]{Ambrosio08}. Recall $\bm t_\mu^\nu$ from Remark~\ref{rem:bm t}.

\begin{definition}\label{def:subdifferential}
Consider a lower semicontinuous $\phi:\mathcal P_2(\R^d)\to (-\infty,\infty]$ such that 
\begin{equation}\label{D(phi)}
D(\phi):=\{\mu\in\Pc_2(\R^d):\phi(\mu)<\infty\}\neq\emptyset\quad\hbox{and}\quad D(\phi)\subseteq\{\mu\in\Pc_2(\R^d):\mu\ll \Lc^d\}. 
\end{equation}
Given $\mu\in D(\phi)$, we say $\xi:\R^d\to\R^d$ is a {subgradient} of $\phi$ at $\mu$ if
$\xi\in L^2(\R^d,\mu)$ and
\begin{align}\label{subdifferential'}
    \phi(\nu)  \geq \phi(\mu) + \int_{\mathbb{R}^{d}} \xi(x)\cdot (\bm t_\mu^\nu(x) - x) \, d\mu(x)+ o\big(\cW_{2}(\mu, \nu)\big),\quad\forall\nu \in \mathcal{P}_{2}(\mathbb{R}^{d}). 
\end{align}
The set of all such {$\xi: \R^d\to\R^d$}, denoted by $\partial \phi(\mu)$, is called the {subdifferential} of $\phi$ at $\mu\in\mathcal P_2(\R^d)$.
\end{definition}

\begin{remark}\label{rem:subdifferential convex}
In Definition~\ref{def:subdifferential}, if $\phi$ is additionally convex along geodesics in $\mathcal P_2(\R^d)$ (see \cite[Definition 9.1.1]{Ambrosio08}), 
then 
 the arguments in \cite[Section 10.1.1, Part B]{Ambrosio08} indicate that $\xi\in\partial\phi(\mu)$ if and only if 
   $ G(\nu)  \geq G(\mu) + \int_{\mathbb{R}^{d}} \xi(x)\cdot (\bm t_\mu^\nu(x) - x) \, d\mu(x)$ for all $\nu \in \mathcal{P}_{2}(\mathbb{R}^{d}).$
\end{remark}

\begin{remark}\label{rem:in Tan}
For the case $\partial \phi(\mu) = \{\xi\}$ (i.e., $\partial \phi(\mu)$ is a singleton), $\xi$ must lie in the tangent space $\operatorname{Tan}_\mu \Pc_2(\R^d)$, i.e., the closure of $\{\nabla\varphi: \varphi\in C^\infty_c(\R^d)\}$ in $L^2(\R^d,\mu)$. Indeed, if $\xi\notin \operatorname{Tan}_\mu \Pc_2(\R^d)$, then $\xi = \zeta + w$ for some $\zeta\in \operatorname{Tan}_\mu \Pc_2(\R^d)$ and nonzero $w\in \operatorname{Tan}^\bot_\mu \Pc_2(\R^d):= \{u\in L^2(\R^d,\mu):\nabla\cdot(u\mu)=0\}$. When we plug $\xi = \zeta + w$ into \eqref{subdifferential'}, as $w\in \operatorname{Tan}^\bot_\mu \Pc_2(\R^d)$, $\int_{\R^d} w(x)\cdot (\bm t_\mu^\nu(x) - x) \, d\mu(x)$ is zero due to \cite[(8.4.3) and Theorem 8.5.1]{Ambrosio08}. It then implies $\zeta\in\partial \phi(\mu)$, contradicting $\partial \phi(\mu) = \{\xi\}$.
\end{remark}


To characterize the subdifferentials of two specific types of functions on $\Pc_2(\R^d)$ (i.e., \eqref{Ic} and \eqref{Kc} below), we need the ``doubling condition'' below (cf.\ \cite[(10.4.23)]{Ambrosio08}). 

\begin{definition}\label{def:doubling}
We say $G:[0,\infty) \rightarrow \mathbb{R}$ fulfills the {doubling} condition if for some $C > 0$, 
\begin{equation}\label{doubling}
    G(x+y) \leq C(1 + G(x) + G(y))\quad \hbox{for all $x,y \ge 0$}.
\end{equation}
\end{definition}

\begin{lemma}\label{lem:in subdifferential}
Let $G:[0,\infty)\to\R$ be convex and differentiable, satisfy the doubling condition \eqref{doubling}, and have superlinear growth at infinity. Assume additionally that
\begin{equation}\label{G/s^alpha}
G(0) = 0,\quad \liminf_{s\downarrow 0} \frac{G(s)}{s^\alpha} >-\infty\ \ \hbox{for some}\ \alpha>\frac{d}{d+2};
\end{equation}
\begin{equation}\label{makes I_G convex}
s\mapsto s^d G(s^{-d})\ \ \hbox{is convex and nondecreasing on $(0,\infty)$} . 
\end{equation}
Consider 
\begin{equation} \label{Ic}
   \mathcal I_G(\mu) :=
    \begin{cases}
       \int_{\R^d}G(\rho(y))dy,&\quad \hbox{if}\ \mu = \rho\mathcal L^d, \\
        \infty, &\quad \hbox{if otherwise},
	\end{cases}
	\quad\forall \mu\in\Pc_2(\R^d), 
\end{equation}
and
\begin{equation}\label{L_G}
L_G(s) := s G'(s) - G(s),\quad s\ge 0.
\end{equation}
For $V:\R^d\to\R$ lower semicontinuous, bounded from below, and locally Lipschitz, consider
\begin{equation} \label{Kc}
   \mathcal K(\mu) :=
    \begin{cases}
       \int_{\R^d} V(y)\rho(y) + G(\rho(y))dy,&\quad \hbox{if}\ \mu = \rho\mathcal L^d, \\
        \infty, &\quad \hbox{if otherwise},
	\end{cases}
	\quad\forall \mu\in\Pc_2(\R^d).
\end{equation}
Then, for any $\rho\in\mathcal D_2(\R^d)$, the following two relations hold:
\begin{align}
&w\in\partial \mathcal K(\rho\mathcal L^d)\ \ \ \ \implies\ \
L_G(\rho)\in W^{1,1}_{\operatorname{loc}}(\R^d)\ \ \hbox{and}\ \  w=\nabla V+{\nabla L_G(\rho)}/{\rho}\in L^2(\R^d,\rho \mathcal L^d).\label{subdiff. K} \\
&w\in\partial \mathcal I_G(\rho\mathcal L^d)\ \ \iff\ \
L_G(\rho)\in W^{1,1}_{\operatorname{loc}}(\R^d)\ \ \hbox{and}\ \
w={\nabla L_G(\rho)}/{\rho}\in L^2(\R^d,\rho \mathcal L^d).\label{subdiff. I_G}
\end{align}
\end{lemma}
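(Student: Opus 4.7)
The plan is to apply the Wasserstein subdifferential calculus of \cite[Chapter 10]{Ambrosio08} to the internal energy $\mathcal I_G$ and to an additive potential contribution from $V$ for $\mathcal K$. The hypotheses on $G$ fit the AGS framework exactly: superlinear growth together with the doubling condition \eqref{doubling} make $\mathcal I_G$ proper and lower semicontinuous on $\Pc_2(\R^d)$, \eqref{G/s^alpha} ensures $\mathcal I_G(\rho\Lc^d)<\infty$ on $\Dc_2(\R^d)$, and \eqref{makes I_G convex} is McCann's displacement convexity condition, guaranteeing that $\mathcal I_G$ is convex along geodesics in $\Pc_2(\R^d)$. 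Since $V$ is lower semicontinuous and bounded below, adding $\mu\mapsto\int V\,d\mu$ preserves lower semicontinuity and makes $\mathcal K$ a well-defined proper functional on $\Pc_2(\R^d)$.

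For the ``$\Leftarrow$'' direction of \eqref{subdiff. I_G}, geodesic convexity of $\mathcal I_G$ permits invoking Remark~\ref{rem:subdifferential convex}, so it suffices to check the global inequality \eqref{subdifferential' convex}. Along the displacement interpolation $\nu_t:=((1-t)\bm i+t\,\bm t_\mu^\nu)_\#\mu$, the change-of-variables formula combined with $L_G(s)=sG'(s)-G(s)$ yields that $t\mapsto\mathcal I_G(\nu_t)$ is convex on $[0,1]$ with right derivative at $0$ equal to $\int_{\R^d}\tfrac{\nabla L_G(\rho)}{\rho}(x)\cdot(\bm t_\mu^\nu(x)-x)\,d\mu(x)$, and convexity then promotes this differential inequality to the global statement \eqref{subdifferential' convex}.

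For the ``$\Rightarrow$'' direction of \eqref{subdiff. I_G} and the forward implication of \eqref{subdiff. K}, I would test \eqref{subdifferential'} against smooth perturbations: fix $\varphi\in C^\infty_c(\R^d;\R^d)$ and set $\nu_\epsilon:=(\bm i+\epsilon\varphi)_\#\mu$. Since $\cW_2(\mu,\nu_\epsilon)=O(\epsilon)$, the error term $o(\cW_2(\mu,\nu_\epsilon))/\epsilon$ vanishes; writing the subgradient inequality for $\nu=\nu_{\pm\epsilon}$ and letting $\epsilon\to 0^+$ yields $\int w\cdot\varphi\,d\mu=\tfrac{d}{d\epsilon}\big|_{0}\bigl(\mathcal I_G(\nu_\epsilon)+\mathcal V(\nu_\epsilon)\bigr)$, with $\mathcal V(\mu):=\int V\,d\mu$ present only in the $\mathcal K$-case. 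The Jacobian expansion $\det(I+\epsilon\nabla\varphi)=1+\epsilon\operatorname{div}\varphi+O(\epsilon^2)$ together with an integration by parts produces $\tfrac{d}{d\epsilon}\mathcal I_G(\nu_\epsilon)|_0=-\int\tfrac{\nabla L_G(\rho)}{\rho}\cdot\varphi\,d\mu$, while Rademacher's theorem applied to the locally Lipschitz $V$ gives $\tfrac{d}{d\epsilon}\mathcal V(\nu_\epsilon)|_0=\int\nabla V\cdot\varphi\,d\mu$. Arbitrariness of $\varphi$ identifies $w$ as claimed and forces $L_G(\rho)\in W^{1,1}_{\operatorname{loc}}(\R^d)$. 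The reverse implication in \eqref{subdiff. K} is not asserted because $V$ is not assumed convex, so $\mathcal K$ may fail geodesic convexity and Remark~\ref{rem:subdifferential convex} is unavailable.

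The main obstacle lies in rigorously passing to the limit inside $\tfrac{1}{\epsilon}[\mathcal I_G(\nu_\epsilon)-\mathcal I_G(\mu)]$: one must express $d\nu_\epsilon/d\Lc^d$ through the inverse Jacobian of $\bm i+\epsilon\varphi$ and then bound $|G(d\nu_\epsilon/d\Lc^d)|$ uniformly in $\epsilon$ on a compact set containing $\operatorname{supp}\varphi$. The doubling condition \eqref{doubling} is precisely what supplies this uniform control, while \eqref{G/s^alpha} and the superlinear growth handle the behavior of $G$ near zero and at infinity, allowing dominated convergence to justify the differentiation under the integral.
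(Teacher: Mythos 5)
Your proposal re-derives from scratch what the paper obtains by citation. The paper's proof of \eqref{subdiff. K} is a direct appeal to \cite[Example 11.1.9]{Ambrosio08} (specifically (11.1.36) and the discussion following it); the ``$\Rightarrow$'' half of \eqref{subdiff. I_G} then follows by specializing to $V\equiv 0$; and the ``$\Leftarrow$'' half invokes the converse of \cite[Theorem 10.4.6]{Ambrosio08}, which yields $w\in\partial\mathcal I^*_G(\rho\Lc^d)$ for the lower-semicontinuous envelope $\mathcal I_G^*$, followed by \cite[Remark 9.3.8]{Ambrosio08} (superlinear growth $\Rightarrow$ $\mathcal I^*_G=\mathcal I_G$). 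Your strategy -- displacement interpolation for one direction, Jacobian expansion of pushforwards for the other -- is precisely the machinery \emph{inside} those AGS proofs, so it is a legitimate alternative route that replaces two black-box citations with their own internals. Note, though, that if you take this route you should be aware the AGS theorem is formulated for the relaxed functional, which is why the paper needs the separate step $\mathcal I_G^*=\mathcal I_G$; a from-scratch argument can sidestep this only if the differentiability computations are done rigorously for $\mathcal I_G$ directly.

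There is a concrete gap in your ``$\Rightarrow$'' argument: you test \eqref{subdifferential'} with $\nu_\epsilon=(\bm i+\epsilon\varphi)_\#\mu$ for a general $\varphi\in C_c^\infty(\R^d;\R^d)$ and then replace $\bm t_\mu^{\nu_\epsilon}-\bm i$ by $\epsilon\varphi$ in the subgradient inequality. But for a vector field $\varphi$ that is not a gradient, $\bm i+\epsilon\varphi$ is typically \emph{not} the optimal map from $\mu$ to $\nu_\epsilon$, so the substitution is unjustified; Definition~\ref{def:subdifferential} compares against $\bm t_\mu^\nu$, not against arbitrary transport maps. The standard fix is to take $\varphi=\nabla\psi$ with $\psi\in C_c^\infty(\R^d)$, so that $\bm i+\epsilon\nabla\psi$ is the gradient of a convex function for small $\epsilon$ and hence optimal by Brenier; this determines the projection of $w$ onto $\operatorname{Tan}_\mu\Pc_2(\R^d)$, and one then appeals to Remark~\ref{rem:in Tan} to conclude $w$ lies in that tangent space. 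Beyond this, the two steps you describe as formal identities -- the right-derivative formula for $t\mapsto\mathcal I_G(\nu_t)$ along geodesics (which requires an approximation argument when $\nu\not\ll\Lc^d$) and the passage to the limit in $\epsilon^{-1}[\mathcal I_G(\nu_\epsilon)-\mathcal I_G(\mu)]$ -- are exactly where the length of the AGS proofs lies, and your closing paragraph correctly identifies but does not resolve this; as written, the argument would not be complete without supplying those estimates.
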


\begin{proof}
Note that \eqref{subdiff. K} is obtained in \cite[Example 11.1.9]{Ambrosio08}; see \cite[(11.1.36)]{Ambrosio08} and the arguments below it. 
For \eqref{subdiff. I_G}, the sufficiency follows from \eqref{subdiff. K} by taking $V\equiv 0$, while the necessity is a consequence of the converse part of \cite[Theorem 10.4.6]{Ambrosio08} and \cite[Remark 9.3.8]{Ambrosio08}. 
\end{proof}


Before applying Lemma~\ref{lem:subdiff.=} to $J_{\lambda,\eta}$ in \eqref{J}, we need to first examine more closely what the integral in \eqref{J} means. 
For any $\lambda,\eta>0$ and $m>1$, let us write $F:[0,\infty)\to\R$ in \eqref{J} as  
\begin{equation} \label{F}
    F(s) = H(s) + P(s)\quad\forall s\ge 0, 
\end{equation}
\begin{equation}\label{H and P}
\hbox{with}\qquad H(s):=\lambda s \ln s\ \ \forall s>0,\ \ H(0):= H(0+)=0;\qquad P(s):= \frac{\eta}{m-1} s^{m}\ \ \forall s\ge 0.
\end{equation}
With $\Psi, P\ge 0$, we interpret $J_{\lambda,\eta}$ in \eqref{J} precisely as 
\[
J_{\lambda,\eta}(\rho) = \int_{\R^d}\Psi \rho dy+\int_{\R^d} H^+(\rho) dy - \int_{\R^d} H^-(\rho) dy +\int_{\R^d} P(\rho) dy. 
\]

\begin{remark}\label{rem:J}
For any  $\alpha\in (0,1)$, 
$\lim_{s\downarrow 0} {H(s)}/{s^\alpha} = \lim_{s\downarrow 0}\frac{-\lambda}{1-\alpha} s^{1-\alpha} = 0$. 
Hence, $H$ fulfills \eqref{G/s^alpha}. This implies that for any $\rho\in\mathcal D_2(\R^d)$,  $H^-(\rho)$ is integrable; see \cite[Remark 9.3.7]{Ambrosio08}. This, along with $\Psi,P\ge 0$, 
entails $J_{\lambda,\eta}(\rho) > -\infty$.  
Moreover, $J_{\lambda, \eta}(\rho)<\infty$ iff $\Psi\rho, (\rho\ln(\rho))^+, \rho^m\in L^1(\R^d)$. 
\end{remark}

We further define $\mathcal J_{\lambda,\eta}:\Pc_2(\R^d)\to \R\cup\{\infty\}$ by 
\begin{equation} \label{Jc}
   \mathcal J_{\lambda, \eta}(\mu) :=
    \begin{cases}
       J_{\lambda,\eta}(\rho),&\quad \hbox{if}\ \mu = \rho\mathcal L^d, \\
        \infty, &\quad \hbox{if otherwise},
	\end{cases}
	\quad\forall \mu\in\Pc_2(\R^d). 
\end{equation}
Observe from \eqref{Ic} and \eqref{F} that 
\begin{equation}\label{J=G+I_F}
\mathcal J_{\lambda,\eta}(\mu) = \mathcal G(\mu) + \mathcal I_F(\mu)\qquad\forall \mu\in\Pc_2(\R^d),
\end{equation}
where $\mathcal G:\Pc_2(\R^d)\to \R\cup\{\infty\}$ is defined by
   $\mathcal G(\mu) :=  \int_{\R^d}\Psi(y) d\mu(y)$ for $\mu\in\Pc_2(\R^d).$

\begin{remark}\label{rem:LSC of Jc}
If $\Psi\ge 0$ is lower semicontinuous, $\mathcal{G}$ is lower semicontinuous w.r.t.\ weak convergence in $\Pc(\R^d)$. As $\Ic_F$ is lower semicontinuous w.r.t.\ the same topology (\cite[Theorem 15.8]{Ambrosio24}), 
so is $\Jc_{\lambda, \eta}$. 
\end{remark}

Careful estimates show that $F$ in \eqref{F} satisfies the doubling condition \eqref{doubling} provided that $\lambda>0$ is not too large, as stated in the next result (whose proof is relegated to Section~\ref{sec:proof of lem:doubling}). 

\begin{lemma}\label{lem:doubling}
For any $\lambda\in (0,1/\ln 2]$, $H$ in \eqref{H and P} fulfills \eqref{doubling} with $C= \frac{1}{1-\ln 2}$. For any $\lambda\in (0,e/2]$, $\eta>0$, and $m > 1$, $F$ in \eqref{F} fulfills \eqref{doubling} with $C= \frac{1}{1-\ln 2}\vee (2^m-1)$.
\end{lemma}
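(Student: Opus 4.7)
The plan is to prove the two doubling bounds separately in a bottom-up fashion: first obtain the bound for $H$, then derive a doubling bound for $P$ by elementary means, and finally combine them into a bound for $F = H + P$.

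For the first assertion, I would exploit the convexity of $f(s) := s\ln s$ on $[0,\infty)$ via Jensen's inequality at the midpoint: writing $x+y = \tfrac{1}{2}(2x) + \tfrac{1}{2}(2y)$ gives $f(x+y) \leq \tfrac{1}{2}(f(2x) + f(2y))$, which expands to $f(x+y) \leq f(x) + f(y) + (x+y)\ln 2$. The remaining task is to absorb the unbounded term $(x+y)\ln 2$ into $f(x+y)$. To do so, I would invoke the elementary inequality $s\ln s \geq s - 1$ for all $s \geq 0$ (immediate from the fact that $s \mapsto s\ln s - s + 1$ attains its global minimum of $0$ at $s = 1$), applied at $s = x+y$, to get $(x+y)\ln 2 \leq \ln 2 + (\ln 2)\,f(x+y)$. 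Substituting this back yields $(1-\ln 2)\,f(x+y) \leq \ln 2 + f(x) + f(y)$; multiplying through by $\lambda$ and using $\lambda \leq 1/\ln 2$ (so that $\lambda \ln 2 \leq 1$) delivers $H(x+y) \leq \frac{1}{1-\ln 2}\bigl(1 + H(x) + H(y)\bigr)$, as claimed.

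For the second assertion, I would first handle $P(s) = \frac{\eta}{m-1} s^m$ alone via the power-mean inequality $(x+y)^m \leq 2^{m-1}(x^m + y^m)$ (valid for $m \geq 1$), giving $P(x+y) \leq 2^{m-1}\bigl(P(x)+P(y)\bigr)$; note that $2^{m-1} \leq 2^m - 1$ for $m \geq 1$. Writing $C_1 := \tfrac{1}{1-\ln 2}$, $C_2 := 2^m - 1$, $C := C_1 \vee C_2$, and noting that $\lambda \leq e/2 < 1/\ln 2$ so that the first assertion applies to $H$, summing the two bounds yields $F(x+y) \leq C_1 + C_1(H(x)+H(y)) + C_2(P(x)+P(y))$. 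Hence $C(1 + F(x) + F(y)) - F(x+y) \geq (C - C_1)\bigl(1 + H(x) + H(y)\bigr) + (C - C_2)\bigl(P(x) + P(y)\bigr)$. The second summand is nonnegative since $P \geq 0$. For the first, I would use $H(s) = \lambda s \ln s \geq -\lambda/e$, so $1 + H(x) + H(y) \geq 1 - 2\lambda/e$, which is nonnegative precisely under the hypothesis $\lambda \leq e/2$. This is where that hypothesis is used.

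The main obstacle is the first step for $H$: the naive Jensen bound produces an excess $(x+y)\ln 2$ that grows linearly in $x+y$, so one must realize that this excess can nevertheless be re-absorbed into $f(x+y)$ itself via the one-sided estimate $s\ln s \geq s - 1$, and this is exactly the mechanism that produces the universal constant $1/(1-\ln 2)$ and forces the threshold $\lambda \leq 1/\ln 2$. Once this bound is in hand, the combination with $P$ is a routine case analysis, and the only subtlety is the sign of $1 + H(x) + H(y)$, which pins down the second hypothesis $\lambda \leq e/2$.
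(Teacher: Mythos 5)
Your proof is correct, and for the key step (the doubling bound for $H$) it takes a genuinely different, and more elementary, route than the paper's. The paper reduces the claim $H(x+y)\le C(1+H(x)+H(y))$ to showing a two-variable function $G(x,y)$ is globally bounded, and then locates and classifies its interior critical point via first-order conditions and a Hessian computation, which pins down both the admissible constant $C$ and the threshold on $\lambda$. You instead apply midpoint Jensen to the convex function $s\mapsto s\ln s$ to get $f(x+y)\le f(x)+f(y)+(x+y)\ln 2$, and then absorb the linear excess via the one-sided bound $s\ln s\ge s-1$, which re-expresses $(x+y)\ln 2$ in terms of $f(x+y)$ itself and makes the constant $1/(1-\ln 2)$ and the threshold $\lambda\le 1/\ln 2$ drop out algebraically. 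Your argument avoids multivariable calculus entirely and is shorter. A further point in your favor: the paper establishes $(x+y)^m\le(2^m-1)(x^m+y^m)$ via binomial expansion and explicitly restricts to integer $m\ge 2$ in that part of the proof, even though the lemma claims the result for all real $m>1$; your use of the power-mean inequality $(x+y)^m\le 2^{m-1}(x^m+y^m)$ together with $2^{m-1}\le 2^m-1$ is valid for all real $m\ge 1$ and so actually covers the stated range without the integrality restriction. The final combination step (exploiting $1+H(x)+H(y)\ge 1-2\lambda/e\ge 0$ under $\lambda\le e/2$) is essentially the same in both arguments, just organized slightly differently.
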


With $F$ satisfying the doubling condition, Lemma~\ref{lem:in subdifferential} can be applied to characterize $\partial\Ic_F(\mu)$. 

\begin{lemma}\label{lem:subdiff.=}
Fix $\lambda, \eta>0$, and $m>1$. For any $\rho\in\Dc(\R^d)$, the following hold:
(i) if $L_F(\rho)\in W^{1,1}_{\operatorname{loc}}(\R^d)$, then $\rho, \rho^m\in W^{1,1}_{\operatorname{loc}}(\R^d)$ with $\nabla \rho^m = m\rho^{m-1}\nabla\rho$; (ii) if additionally $\lambda<e/2$, $\rho\in\Dc_2(\R^d)$, and $\nabla L_F(\rho)/\rho \in L^2(\R^d,\rho\mathcal L^d)$, then 
\begin{align}
\partial\Ic_F(\rho\Lc^d) &= \{\nabla L_F(\rho)/\rho\}= \{\lambda\nabla\rho/\rho +\eta\nabla \rho^m/\rho\},\label{subdiff. I_F=}\\
\partial\Ic_P(\rho\Lc^d) &= \{\nabla L_P(\rho)/\rho\} = \{\eta\nabla \rho^m/\rho\}.\label{subdiff. I_P=}
\end{align}
\end{lemma}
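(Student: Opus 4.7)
The plan is to first compute the auxiliary functions $L_F$ and $L_P$ explicitly, then prove part~(i) by exploiting that $L_F$ has a globally Lipschitz inverse, and finally derive part~(ii) by verifying the hypotheses of Lemma~\ref{lem:in subdifferential} for both $F$ and $P$ and reading off the subdifferentials. Direct substitution into $L_G(s)=sG'(s)-G(s)$ gives $L_F(s)=\lambda s+\eta s^m$ and $L_P(s)=\eta s^m$. In particular, $L_F$ is $C^1$ and strictly increasing on $[0,\infty)$ with $L_F'(s)=\lambda+\eta m s^{m-1}\ge\lambda>0$, so $L_F^{-1}:[0,\infty)\to[0,\infty)$ is well-defined and globally Lipschitz with constant $1/\lambda$.

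For~(i), under the assumption $L_F(\rho)\in W^{1,1}_{\operatorname{loc}}(\R^d)$, the composition $\rho=L_F^{-1}(L_F(\rho))$ lies in $W^{1,1}_{\operatorname{loc}}(\R^d)$ by the standard chain rule for Sobolev functions composed with a Lipschitz function. Consequently, $\rho^m=(L_F(\rho)-\lambda\rho)/\eta\in W^{1,1}_{\operatorname{loc}}(\R^d)$ as well. To obtain $\nabla\rho^m=m\rho^{m-1}\nabla\rho$, I would compare two expressions for $\nabla L_F(\rho)$: linearity in the Sobolev sense yields $\nabla L_F(\rho)=\lambda\nabla\rho+\eta\nabla\rho^m$, while the chain rule applied to $\rho=L_F^{-1}(L_F(\rho))$ yields $\nabla\rho=\nabla L_F(\rho)/(\lambda+\eta m\rho^{m-1})$ a.e.; substituting the second into the first and solving for $\nabla\rho^m$ produces the claim.

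For~(ii), both $F$ and $P$ are convex, differentiable on $(0,\infty)$, vanish at the origin, have superlinear growth, and satisfy the doubling condition (by Lemma~\ref{lem:doubling} for $F$, since $\lambda<e/2$, and by a direct estimate for $P$). The smallness requirement $\liminf_{s\downarrow 0}G(s)/s^\alpha>-\infty$ holds for any $\alpha\in(d/(d+2),1)$ because $s\ln s/s^\alpha=s^{1-\alpha}\ln s\to 0$, and condition~\eqref{makes I_G convex} can be checked by direct differentiation of $s^d G(s^{-d})$. Consequently, Lemma~\ref{lem:in subdifferential} applies to both $F$ and $P$, yielding the iff characterizations of $\partial\Ic_F(\rho\Lc^d)$ and $\partial\Ic_P(\rho\Lc^d)$, which forces each subdifferential to be at most a singleton. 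For $F$, the hypotheses of~(ii) directly give $\partial\Ic_F(\rho\Lc^d)=\{\nabla L_F(\rho)/\rho\}$, and the explicit form $\{\lambda\nabla\rho/\rho+\eta\nabla\rho^m/\rho\}$ follows from part~(i). For $P$, part~(i) ensures $L_P(\rho)=\eta\rho^m\in W^{1,1}_{\operatorname{loc}}(\R^d)$; and since $\nabla\rho^m=m\rho^{m-1}\nabla\rho$ is parallel to $\nabla\rho$, the decomposition $\nabla L_F(\rho)/\rho=\lambda\nabla\rho/\rho+\eta\nabla\rho^m/\rho$ gives the pointwise inequality $|\eta\nabla\rho^m/\rho|\le|\nabla L_F(\rho)/\rho|$, so $\eta\nabla\rho^m/\rho\in L^2(\R^d,\rho\Lc^d)$ and Lemma~\ref{lem:in subdifferential} yields $\partial\Ic_P(\rho\Lc^d)=\{\eta\nabla\rho^m/\rho\}$.

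The main obstacle is the chain rule $\nabla\rho^m=m\rho^{m-1}\nabla\rho$ in part~(i): since a general $\rho\in\Dc(\R^d)$ need not be locally bounded, this identity does not follow immediately from the Sobolev chain rule applied to $s\mapsto s^m$. The detour through the Lipschitz inverse $L_F^{-1}$ is what bypasses this issue, because both $\rho$ and $\rho^m$ are thereby realized as Lipschitz compositions of the single $W^{1,1}_{\operatorname{loc}}$ function $L_F(\rho)$, and the chain-rule identity for $\rho^m$ then emerges algebraically from matching the two derivations of $\nabla L_F(\rho)$.
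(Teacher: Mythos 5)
Your proposal is correct and follows essentially the same route as the paper's own proof: part (i) via the globally Lipschitz inverse $L_F^{-1}$ and the Sobolev chain rule, deducing $\nabla\rho^m=m\rho^{m-1}\nabla\rho$ by matching the two expressions for $\nabla L_F(\rho)$; part (ii) by verifying the hypotheses of Lemma~\ref{lem:in subdifferential} for $F$ and $P$ and reading off the singleton subdifferentials, with the pointwise bound $|\eta\nabla\rho^m/\rho|\le|\nabla L_F(\rho)/\rho|$ (coming from $\nabla\rho^m$ being parallel to $\nabla\rho$) supplying the needed $L^2(\R^d,\rho\Lc^d)$ integrability of $\nabla L_P(\rho)/\rho$.
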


\begin{proof}
(i) By \eqref{L_G} and \eqref{F}, $L_F(s)=\lambda s+\eta s^m$ is strictly increasing with $L_F'(s)=\lambda +\eta m s^{m-1}\ge \lambda>0$ for $s\ge 0$. Thus, $L_F^{-1}$ is well-defined and lies in $C^1([0,\infty))$ with $(L_F^{-1})'$ bounded (By the inverse function theorem, $(L_F^{-1})'(r) = 1/(L_F'(L_F^{-1}(r)))\in (0, 1/\lambda]$). As we can write $\rho= L_F^{-1}(L_F(\rho))$, where $L_F^{-1}\in C^1([0,\infty))$ has bounded first derivative and $L_F(\rho)\in W^{1,1}_{\operatorname{loc}}(\R^d)$, the chain rule for weak derivatives is applicable and it asserts the existence of the weak derivative $\nabla\rho\in L^1_{\operatorname{loc}}(\R^d)$, given by $\nabla\rho = (L_F^{-1})'(L_F(\rho)) \nabla L_F(\rho) = ({1}/{L_F'(\rho)}) \nabla L_F(\rho)$. 
This yields $\rho\in W^{1,1}_{\operatorname{loc}}(\R^d)$ and 
\begin{equation}\label{nabla L_F}
\nabla L_F(\rho) = L_F'(\rho) \nabla\rho = (\lambda + \eta m \rho^{m-1})\nabla \rho.
\end{equation}
With $L_F(\rho), \rho\in  W^{1,1}_{\operatorname{loc}}(\R^d)$, we right away get $\rho^m = \frac1\eta (L_F(\rho) - \lambda\rho)\in W^{1,1}_{\operatorname{loc}}(\R^d)$ with $\nabla \rho^m = \frac1\eta (\nabla L_F(\rho)-\lambda \nabla\rho) = m\rho^{m-1}\nabla\rho$, where the last equality follows from \eqref{nabla L_F}.  

(ii) Note that $F$ in \eqref{F} satisfies all the conditions in Lemma~\ref{lem:in subdifferential}, thanks in part to Lemma~\ref{lem:doubling}. Hence, with $L_F(\rho)\in W^{1,1}_{\operatorname{loc}}(\R^d)$ and $\nabla L_F(\rho)/\rho \in L^2(\R^d,\rho\mathcal L^d)$, \eqref{subdiff. I_G} directly implies $\partial\Ic_F(\rho\Lc^d) = \{\nabla L_F(\rho)/\rho\}= \{\lambda\nabla\rho/\rho +\eta\nabla \rho^m/\rho\}$, where the last equality follows from \eqref{nabla L_F} and part (i). Note that $P$ in \eqref{H and P} also satisfies all the conditions in Lemma~\ref{lem:in subdifferential}; particularly, the doubling condition \eqref{doubling} is fulfilled trivially due to \eqref{(x+y)^m}. By \eqref{L_G} and part (i), $L_P(\rho) = \eta \rho^m\in W^{1,1}_{\operatorname{loc}}(\R^d)$.  
Moreover, 
\begin{align}
\int_{\R^d}\frac{|\nabla L_P(\rho)|^2}{\rho^2} \rho dy = \int_{\R^d} \frac{\eta m \rho^{m-1}|\nabla\rho|^2}{\rho^2} \rho dy \le \int_{\R^d} \frac{|\nabla L_F(\rho)|^2}{\rho^2} \rho dy<\infty, 
\end{align}
where the equality follows from part (i), the first inequality is due to \eqref{nabla L_F}, and the finiteness results from $L_F(\rho)/\rho\in L^2(\R^d,\rho\mathcal L^d)$. Thus, we conclude $\nabla L_P(\rho)/\rho\in L^2(\R^d, \rho\mathcal L^d)$. With $L_P(\rho)\in W^{1,1}_{\operatorname{loc}}(\R^d)$ and $\nabla L_P(\rho)/\rho \in L^2(\R^d,\rho\mathcal L^d)$, \eqref{subdiff. I_G} implies $\partial\Ic_P(\rho\Lc^d) = \{\nabla L_P(\rho)/\rho\}= \{\eta\nabla \rho^m/\rho\}$.
\end{proof}

When $\nabla\Psi$ has linear growth, we can further characterize $\partial \Jc_{\lambda,\eta}(\mu)$ based on 
Lemma~\ref{lem:subdiff.=}. 

\begin{corollary}\label{coro:subdiff.=}
Fix $\lambda\in(0,e/2)$, $\eta>0$, and $m>1$. Assume $\Psi\in C^1(\R^d)$ and 
\begin{equation}\label{Psi' linear growth}
\exists K>0\ \ \hbox{s.t.}\ \ |\nabla\Psi(y)|\le K(1+|y|)\quad \forall y\in\R^d.
\end{equation}
Then, for any $\rho\in\mathcal D_2(\R^d)$, 
\begin{align}
w\in\partial \mathcal J_{\lambda,\eta}(\rho\mathcal L^d)\ \ \iff\ \
L_F(\rho)\in W^{1,1}_{\operatorname{loc}}(\R^d)\ \ \hbox{and}\ \ 
w=\nabla \Psi+{\nabla L_F(\rho)}/{\rho}\in L^2(\R^d,\rho \mathcal L^d). 
\label{subdiff. J} 
\end{align}
Hence, if $\partial \mathcal J_{\lambda,\eta}(\rho\mathcal L^d)\neq\emptyset$, we have $\partial \mathcal J_{\lambda,\eta}(\rho\mathcal L^d) = \{\nabla \Psi+{\nabla L_F(\rho)}/{\rho}\}= \{\nabla\Psi+\lambda\nabla\rho/\rho +\eta\nabla \rho^m/\rho\}$ as well as \eqref{subdiff. I_F=} and \eqref{subdiff. I_P=}. 
\end{corollary}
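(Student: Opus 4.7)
The plan is to exploit the decomposition $\mathcal J_{\lambda,\eta} = \mathcal G + \mathcal I_F$ from \eqref{J=G+I_F} and handle the two summands separately, combining them via the additive sum rule for Wasserstein subgradients, which is immediate from summing the defining inequalities \eqref{subdifferential'}. For the forward direction of \eqref{subdiff. J}, I simply invoke Lemma~\ref{lem:in subdifferential} with $V = \Psi$ and $G = F$. The hypotheses are met: $\Psi \in C^1(\R^d)$ is lower semicontinuous, nonnegative, and locally Lipschitz; $F$ fulfills the conditions of Lemma~\ref{lem:in subdifferential}, with the doubling condition supplied by Lemma~\ref{lem:doubling} (using $\lambda < e/2$), the $\alpha$-growth condition by \eqref{F/s^alpha}, and convexity, differentiability, superlinear growth, $F(0)=0$, and McCann's criterion \eqref{makes I_G convex} being routine. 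Then \eqref{subdiff. K} delivers $L_F(\rho) \in W^{1,1}_{\operatorname{loc}}(\R^d)$ and $w = \nabla\Psi + \nabla L_F(\rho)/\rho \in L^2(\R^d,\rho\mathcal L^d)$.

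For the reverse direction, assume $L_F(\rho) \in W^{1,1}_{\operatorname{loc}}(\R^d)$ and $w := \nabla\Psi + \nabla L_F(\rho)/\rho \in L^2(\R^d,\rho\mathcal L^d)$. The linear growth \eqref{Psi' linear growth} combined with $\rho \in \mathcal D_2(\R^d)$ yields $\nabla\Psi \in L^2(\R^d,\rho\mathcal L^d)$, so $\nabla L_F(\rho)/\rho = w - \nabla\Psi \in L^2(\R^d,\rho\mathcal L^d)$; Lemma~\ref{lem:subdiff.=}(ii) then produces $\nabla L_F(\rho)/\rho \in \partial\mathcal I_F(\rho\mathcal L^d)$. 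It thus suffices to establish $\nabla\Psi \in \partial\mathcal G(\rho\mathcal L^d)$, after which the sum rule gives $w \in \partial\mathcal J_{\lambda,\eta}(\rho\mathcal L^d)$.

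For this key step, I proceed by Taylor expansion. Setting $\mu := \rho\mathcal L^d$, since $\mu \ll \mathcal L^d$, Remark~\ref{rem:bm t} supplies the optimal transport map $\bm t_\mu^\nu$ for every $\nu \in \mathcal P_2(\R^d)$, and a direct computation gives
\[
\mathcal G(\nu) - \mathcal G(\mu) - \int_{\R^d} \nabla\Psi(x) \cdot (\bm t_\mu^\nu(x) - x)\, d\mu(x) = \int_{\R^d} \int_0^1 \big[\nabla\Psi(x + s(\bm t_\mu^\nu(x) - x)) - \nabla\Psi(x)\big] \cdot (\bm t_\mu^\nu(x) - x)\, ds\, d\mu(x),
\]
whose absolute value is bounded by $\mathcal W_2(\mu,\nu) \cdot A(\nu)^{1/2}$ via Cauchy--Schwarz, where $A(\nu) := \int_{\R^d}\int_0^1 |\nabla\Psi(x+s(\bm t_\mu^\nu(x)-x)) - \nabla\Psi(x)|^2\, ds\, d\mu(x)$. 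The main obstacle is showing $A(\nu) \to 0$ as $\mathcal W_2(\mu,\nu) \to 0$ using only $C^1$ regularity and linear growth of $\nabla\Psi$ (with no Lipschitz or displacement-convexity assumption on $\Psi$). The strategy is Vitali convergence: along any sequence $\nu_n \to \mu$ in $\mathcal W_2$, extract a subsequence along which $\bm t_\mu^{\nu_n} \to \bm i$ holds $\mu$-a.e.\ (possible because $\bm t_\mu^{\nu_n} - \bm i \to 0$ in $L^2(\mu)$); continuity of $\nabla\Psi$ then forces the integrand in $A(\nu_n)$ to zero pointwise, while uniform integrability follows from $|\nabla\Psi(z)|^2 \lesssim 1+|z|^2$ together with $\int_{\R^d} |\bm t_\mu^{\nu_n}(x)|^2\, d\mu(x) = \int_{\R^d} |y|^2\, d\nu_n(y) \to \int_{\R^d} |y|^2\, d\mu(y) < \infty$. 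Once this is in hand, the singleton claim and \eqref{subdiff. I_F=}--\eqref{subdiff. I_P=} follow at once: nonemptiness of $\partial\mathcal J_{\lambda,\eta}(\rho\mathcal L^d)$ combined with \eqref{subdiff. J} forces any subgradient to equal $\nabla\Psi + \nabla L_F(\rho)/\rho$, and Lemma~\ref{lem:subdiff.=}(ii) applies with $\nabla L_F(\rho)/\rho = w - \nabla\Psi \in L^2(\R^d,\rho\mathcal L^d)$ to deliver \eqref{subdiff. I_F=} and \eqref{subdiff. I_P=}.
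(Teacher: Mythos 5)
Your proof is correct, and for the central step it takes a genuinely different route from the paper. Both proofs decompose $\mathcal J_{\lambda,\eta} = \mathcal G + \mathcal I_F$, get the forward direction from Lemma~\ref{lem:in subdifferential} via \eqref{subdiff. K}, handle $\partial\mathcal I_F$ via Lemma~\ref{lem:subdiff.=}(ii), and combine via the additive sum rule. The divergence is in establishing $\nabla\Psi \in \partial\mathcal G(\rho\mathcal L^d)$. The paper outsources this to the Lions/L-derivative machinery of Carmona--Delarue: \eqref{Psi' linear growth} makes $\mathcal G$ continuously L-differentiable with L-derivative $\nabla\Psi$, and \cite[Theorem 5.64]{CD-book-18-I} identifies this as the unique Wasserstein subgradient. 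You instead give a self-contained first-principles argument: a line-segment Taylor expansion along $x \mapsto x + s(\bm t_\mu^\nu(x) - x)$, a Cauchy--Schwarz split into $\mathcal W_2(\mu,\nu)\cdot A(\nu)^{1/2}$, and Vitali/generalized dominated convergence (pointwise vanishing of the integrand along an a.e.-convergent subsequence of $\bm t_\mu^{\nu_n}$, uniform integrability from the quadratic domination $|\nabla\Psi(z)|^2 \lesssim 1+|z|^2$ plus convergence of second moments under $\mathcal W_2$). What your approach buys: it is elementary and avoids the lifting to $L^2(\Omega;\R^d)$ behind the L-derivative formalism. What the paper's approach buys: it is shorter given the external reference and also delivers outright that $\partial\mathcal G(\mu)$ is a \emph{singleton}, whereas you only establish $\nabla\Psi\in\partial\mathcal G(\mu)$ --- though this gap is immaterial here, since the forward direction of \eqref{subdiff. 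J} already forces any subgradient of $\mathcal J_{\lambda,\eta}$ to equal $\nabla\Psi + \nabla L_F(\rho)/\rho$, so the singleton conclusion for $\partial\mathcal J_{\lambda,\eta}$ follows anyway, exactly as you observe at the end.
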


\begin{proof}
As $\rho\in\mathcal D_2(\R^d)$, \eqref{Psi' linear growth} implies $\int_{\R^d}|\nabla\Psi(y)|^2 \rho(y) dy\le C_K\int_{\R^d} (1+|y|^2)\rho(y) <\infty$ for some $C_K>0$ depending on $K>0$. Hence, $\nabla\Psi \in L^2(\R^d,\rho\mathcal L^d)$.  

The sufficiency in \eqref{subdiff. J} follows directly from Lemma~\ref{lem:in subdifferential}; see \eqref{subdiff. K}, with $(V,G)$ therein taken to be $(\Psi,F)$. We will thus focus on proving the necessity. 
Assume $L_F(\rho)\in W^{1,1}_{\operatorname{loc}}(\R^d)$ and $w=\nabla \Psi+{\nabla L_F(\rho)}/{\rho}\in L^2(\R^d,\rho \mathcal L^d)$. As $w, \nabla\Psi \in L^2(\R^d,\rho\mathcal L^d)$, we have ${\nabla L_F(\rho)}/{\rho} = w-\nabla\Psi\in L^2(\R^d,\rho \mathcal L^d)$. Hence, Lemme~\ref{lem:subdiff.=} (ii) directly gives $ \partial\mathcal I_F(\rho\mathcal L^d) = \{\nabla L_F(\rho)/\rho\}$. 
On the other hand, 
by Example 1 in \cite[Section 5.2.2]{CD-book-18-I}, under \eqref{Psi' linear growth}, the ``L-derivative'' of $\mathcal G$ in \eqref{J=G+I_F} at any $\mu\in\Pc_2(\R^d)$ (see \cite[Definition 5.22]{CD-book-18-I}) is $\nabla \Psi$. As $\nabla \Psi$ is continuous, $\mathcal G$ is continuously L-differentiable. Then, \cite[Theorem 5.64]{CD-book-18-I} asserts that the L-derivative of $\mathcal G$ is the unique element of $\partial\mathcal G(\mu)$, i.e., $\partial\mathcal G(\mu)=\{\nabla\Psi\}$ $\forall \mu\in\Pc_2(\R^d)$. With $\partial\mathcal G(\rho\mathcal L^d)=\{\nabla\Psi\}$ and $\partial\mathcal I_F(\rho\mathcal L^d) = \{\nabla L_F(\rho)/\rho\}$, 
we deduce from $\mathcal J_{\lambda,\eta} = \mathcal G + \mathcal I_F$ (recall \eqref{J=G+I_F}) and Definition~\ref{def:subdifferential} that $\nabla\Psi + \nabla L_F(\rho)/\rho\in \partial \Jc_{\lambda,\eta}(\rho\Lc^d)$, as desired. 

Finally, if $\partial \mathcal J_{\lambda,\eta}(\rho\mathcal L^d)\neq\emptyset$, \eqref{subdiff. J} directly implies  $L_F(\rho)\in W^{1,1}_{\operatorname{loc}}(\R^d)$, $w=\nabla \Psi+{\nabla L_F(\rho)}/{\rho}\in L^2(\R^d,\rho \mathcal L^d)$, and $\partial \mathcal J_{\lambda,\eta}(\rho\mathcal L^d) = \{\nabla \Psi+{\nabla L_F(\rho)}/{\rho}\}= \{\nabla\Psi+\lambda\nabla\rho/\rho +\eta\nabla \rho^m/\rho\}$, where the last equality follows from \eqref{nabla L_F}. With $L_F(\rho)\in W^{1,1}_{\operatorname{loc}}(\R^d)$ and ${\nabla L_F(\rho)}/{\rho} = w-\nabla\Psi\in L^2(\R^d,\rho \mathcal L^d)$, Lemme~\ref{lem:subdiff.=} (ii) directly gives \eqref{subdiff. I_F=} and \eqref{subdiff. I_P=}. 
\end{proof}


\section{Solving the Nonlinear Fokker-Planck Equation \eqref{FPE mixed}}\label{sec:sol. to FPE}
We first specify precisely what constitutes a solution to the Fokker-Planck Equation \eqref{FPE mixed}. 
    
\begin{definition}\label{def:sol. to FPE}
Given $\rho_0\in\mathcal D(\R^d)$, we say $\rho:[0,\infty)\to\Dc(\R^d)$ is a solution to \eqref{FPE mixed} 
if the following hold: (i) $t\mapsto\rho(t)$ is continuous on $[0,\infty)$ under the weak-$*$ topology in $L^1(\R^d)$, with $\rho(0)=\rho_0$; 
(ii) $L_F(\rho(t))\in W^{1,1}_{\operatorname{loc}}(\R^d)$ for all $t> 0$; (iii) for any $\varphi \in C_{c}^{\infty}((0,\infty) \times \mathbb{R}^{d})$, 
    \begin{equation}\label{distri. sense}
        \int_{0}^{\infty} \int_{\mathbb{R}^{d}}   \left\{\rho\partial_{t} \varphi - \Big(\rho \nabla \Psi  + \nabla \big(  \lambda \rho + \eta \rho^{m}  \big)  \Big) \cdot \nabla \varphi \right\} \, dy \, dt = 0.
    \end{equation}
\end{definition}

\begin{remark}
By \eqref{L_G} and \eqref{F}, $L_F(s) = \lambda s +\eta s^m$ for $s\ge 0$. Hence, for any $t>0$, the condition ``$L_F(\rho(t))\in W^{1,1}_{\operatorname{loc}}(\R^d)$'' in Definition~\ref{def:sol. to FPE} is to ensure that $\int_{\R^d}\big(\nabla (\lambda \rho + \eta \rho^{m} )\cdot \nabla \varphi\big)(t,y)\, dy$ in \eqref{distri. sense} is well-defined. By Lemma~\ref{lem:subdiff.=} (i), it also implies  $\rho(t), \rho^m(t)\in W^{1,1}_{\operatorname{loc}}(\R^d)$.
\end{remark}


To state our existence result for \eqref{FPE mixed}, we still need absolutely continuous curves in $\Pc_2(\R^d)$. 

\begin{definition}
Let $I\subseteq [0,\infty)$ be a time interval. For any $p\ge 1$, a curve {$\bm\nu : I \rightarrow \mathcal{P}_{2}(\mathbb{R}^{d})$} is (locally) absolutely continuous with order $p$,  
if there exists {$m\in L^p(I)$} ($m\in L^p_{\operatorname{loc}}(I)$) such that  
       $\cW_{2}(\bm\nu_{t},\bm\nu_{s}) \leq \int_{s}^{t} {m(z)} \, dz$ for $s,t \in I$ with $s \leq t.$
We denote by {$AC^p(I;\mathcal P_2(\R^d))$} ($AC^p_{\operatorname{loc}}(I;\mathcal P_2(\R^d))$) the set of all such curves. For $p=1$, we will simply write {$AC(I;\mathcal P_2(\R^d))$} ($AC_{\operatorname{loc}}(I;\mathcal P_2(\R^d))$).
\end{definition}


\begin{theorem}\label{thm:sol. to FPE}
Fix $\lambda\in(0,e/2)$, $\eta>0$, and $m>1$. Assume $\Psi\in C^1(\R^d)$. 
For any $\rho_0\in \mathcal D_2(\R^d)$ such that $J_{\lambda,\eta}(\rho_0) <\infty$, there exists a solution $\rho:[0,\infty)\to\Dc_2(\R^d)$ to the Fokker-Planck equation \eqref{FPE mixed} 
such that $\{\mu_t\}_{t\ge 0}$, with $\mu_t := \rho(t) \mathcal L^d\in\Pc_2(\R^d)$, 
lies in $AC^2_{\operatorname{loc}}([0,\infty);\Pc_2(\R^d))$ and 
\begin{equation}\label{v}
v_\rho(t,y) := - \left(\nabla\Psi +\lambda \nabla\rho/\rho + \eta \nabla\rho^m/\rho\right)(t,y)\quad \forall (t, y)\in[0,\infty)\times\R^d
\end{equation}
fulfills the estimate
\begin{equation}\label{int v^2}
\int_0^T \int_{\R^d} \left|v_\rho(t,y)\right|^2 \rho(t,y)\, dy dt <\infty,\quad\forall T>0.
\end{equation}  
If we further assume \eqref{Psi' linear growth}, then for $\Lc^1$-a.e.\ $t\ge 0$,
\begin{align*}
&\partial\mathcal J_{\lambda,\eta}(\rho(t)\mathcal L^d) = \{-v_\rho(t,\cdot)\} =\{(\nabla\Psi +\lambda \nabla\rho/\rho + \eta\nabla \rho^m/\rho)(t,\cdot)\},\\
\partial\Ic_F(\rho(t)&\Lc^d) = \{(\lambda\nabla\rho/\rho +\eta\nabla \rho^m/\rho)(t,\cdot)\},\qquad
\partial\Ic_P(\rho(t)\Lc^d) = \{(\eta\nabla \rho^m/\rho)(t,\cdot)\}.
\end{align*}
\end{theorem}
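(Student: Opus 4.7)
The plan is to produce $\rho$ as the Wasserstein gradient flow of $\Jc_{\lambda,\eta}$ in $\Pc_2(\R^d)$, by invoking \cite[Theorem 11.1.6]{Ambrosio08} together with \cite[Example 11.1.9]{Ambrosio08}, which are tailor-made for functionals of the form $\Kc$ in our Lemma~\ref{lem:in subdifferential}. The resulting $AC^2_{\operatorname{loc}}$ curve $t\mapsto \mu_t=\rho(t)\Lc^d$ will automatically satisfy a continuity equation whose velocity field, via the subdifferential identification of Corollary~\ref{coro:subdiff.=}, turns out to be the desired \eqref{v}. Integrating this continuity equation against test functions and applying the chain-rule identity \eqref{nabla L_F} then recovers \eqref{distri. sense}.

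\textbf{Verifying the hypotheses.} The first step is to check that $F(s)=\lambda s\ln s+\tfrac{\eta}{m-1}s^m$ meets every assumption of Lemma~\ref{lem:in subdifferential}, namely: convexity and differentiability (immediate), superlinear growth at infinity (from $s^m$, $m>1$), the doubling condition \eqref{doubling} (Lemma~\ref{lem:doubling}, using $\lambda<e/2$), the lower-bound condition \eqref{G/s^alpha} (from \eqref{F/s^alpha} combined with $P\ge 0$), and McCann's displacement convexity condition \eqref{makes I_G convex} (which holds separately for $H(s)=\lambda s\ln s$ and $P(s)=\tfrac{\eta}{m-1}s^m$ by the standard computation, and is preserved under sums). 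Since $\Psi\in C^1(\R^d)$ is nonnegative, it is in particular lower semicontinuous, bounded below, and locally Lipschitz, so $(\Psi,F)$ plays exactly the role of $(V,G)$ in \eqref{Kc}. Lower semicontinuity of $\Jc_{\lambda,\eta}$ on $\Pc_2(\R^d)$ is Remark~\ref{rem:LSC of Jc}, and $D(\Jc_{\lambda,\eta})\ni \rho_0\Lc^d$ by hypothesis.

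\textbf{Producing the flow and the PDE.} With these hypotheses in place, the minimizing-movement scheme of \cite[Chapter 11]{Ambrosio08} yields a curve $\mu_t=\rho(t)\Lc^d\in AC^2_{\operatorname{loc}}([0,\infty);\Pc_2(\R^d))$ with $\mu_0=\rho_0\Lc^d$ satisfying, for $\Lc^1$-a.e.\ $t>0$, both the continuity equation $\partial_t\mu_t+\nabla\cdot(v_\rho(t,\cdot)\,\mu_t)=0$ in $\Dc'((0,\infty)\times\R^d)$ and the gradient-flow inclusion $-v_\rho(t,\cdot)\in\partial\Jc_{\lambda,\eta}(\mu_t)$ with minimal $L^2(\mu_t)$-norm. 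Under \eqref{Psi' linear growth}, Corollary~\ref{coro:subdiff.=} forces
\[
-v_\rho(t,\cdot)=\nabla\Psi+\nabla L_F(\rho(t))/\rho(t)=\nabla\Psi+\lambda\nabla\rho(t)/\rho(t)+\eta\nabla\rho(t)^m/\rho(t),
\]
matching \eqref{v}, and simultaneously delivers the three subdifferential identifications in the last display. (Without \eqref{Psi' linear growth} one still has $-v_\rho(t,\cdot)=\nabla\Psi+\nabla L_F(\rho(t))/\rho(t)$ directly from \eqref{subdiff. K}, which in particular forces $L_F(\rho(t))\in W^{1,1}_{\operatorname{loc}}(\R^d)$ for a.e.\ $t$, giving clause (ii) of Definition~\ref{def:sol. to FPE}.) The integrability estimate \eqref{int v^2} on any $[0,T]$ is the energy-dissipation identity of the flow,
\[
\int_0^T\!\!\int_{\R^d}|v_\rho(t,y)|^2\rho(t,y)\,dy\,dt\;\le\;2\bigl(\Jc_{\lambda,\eta}(\mu_0)-\Jc_{\lambda,\eta}(\mu_T)\bigr)<\infty,
\]
finite because $\Jc_{\lambda,\eta}\ge 0$ and $\Jc_{\lambda,\eta}(\mu_0)=J_{\lambda,\eta}(\rho_0)<\infty$. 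Testing the continuity equation against $\varphi\in C_c^\infty((0,\infty)\times\R^d)$ and using Lemma~\ref{lem:subdiff.=}(i) (so that $\nabla L_F(\rho)=\nabla(\lambda\rho+\eta\rho^m)$) produces \eqref{distri. sense}; continuity of $t\mapsto\rho(t)$ in the weak-$*$ topology of $L^1(\R^d)$ follows from $\Wc_2$-continuity of $\mu_t$ together with the conservation of unit mass.

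\textbf{Anticipated difficulty.} The cleanest part is the gradient-flow inclusion; the technical friction lies in checking that $F$ really does verify every item required by \cite[Example 11.1.9]{Ambrosio08}, in particular the McCann condition \eqref{makes I_G convex} for the mixed entropy-plus-power form (here the superlinear $P$ dominates and the logarithmic $H$ is handled by the classical computation). A secondary subtle point is that the subdifferential identification is only available for a.e.\ $t$, so the condition ``$L_F(\rho(t))\in W^{1,1}_{\operatorname{loc}}(\R^d)$ for all $t>0$'' in Definition~\ref{def:sol. to FPE}(ii) must be interpreted in the a.e.\ sense that suffices for the distributional formulation \eqref{distri. sense}.
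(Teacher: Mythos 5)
Your route is the same as the paper's: verify that $F$ meets every hypothesis of \cite[Example 11.1.9]{Ambrosio08}, invoke \cite[Theorem 11.1.6]{Ambrosio08} to produce the $AC^2_{\operatorname{loc}}$ curve $\mu_t=\rho(t)\Lc^d$ solving the continuity equation with velocity $\bar v = -(\nabla\Psi+\nabla L_F(\rho)/\rho)$, identify $\bar v$ with $v_\rho$ via \eqref{nabla L_F}, and then apply Corollary~\ref{coro:subdiff.=} under \eqref{Psi' linear growth} to nail the subdifferentials for a.e.\ $t$. Two details in your writeup, however, are off.

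First, your justification of \eqref{int v^2} rests on the claim ``$\Jc_{\lambda,\eta}\ge 0$,'' which is false: the entropy term $\lambda\int\rho\ln\rho$ can be arbitrarily negative, so $J_{\lambda,\eta}$ is not sign-definite. What is true — and is exactly what Remark~\ref{rem:J} records — is that $J_{\lambda,\eta}(\rho)>-\infty$ for every $\rho\in\Dc_2(\R^d)$, because the negative part of $\rho\ln\rho$ is controlled by the second moment. That lower bound is what you would actually need if you want to argue via an energy-dissipation inequality. The paper sidesteps this altogether by reading \eqref{int v^2} directly off the conclusions \cite[(11.1.36)--(11.1.38)]{Ambrosio08}, which is cleaner than reconstructing the dissipation estimate by hand.

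Second, the ``anticipated difficulty'' you flag at the end — that $L_F(\rho(t))\in W^{1,1}_{\operatorname{loc}}(\R^d)$ is only available for a.e.\ $t$, requiring a weakened reading of Definition~\ref{def:sol. to FPE}(ii) — is not actually there. The cited \cite[Theorem 11.1.6]{Ambrosio08} (see in particular \cite[(11.1.36)]{Ambrosio08}) yields $L_F(\rho(t))\in W^{1,1}_{\operatorname{loc}}(\R^d)$ for \emph{every} $t\ge 0$, not merely almost every $t$, so Definition~\ref{def:sol. to FPE}(ii) is satisfied verbatim and no reinterpretation is needed. Only the identification of $\partial\Jc_{\lambda,\eta}(\rho(t)\Lc^d)$ as a singleton requires the $L^2(\rho(t)\Lc^d)$-integrability of $v_\rho(t,\cdot)$, which \eqref{int v^2} supplies for a.e.\ $t$; that is the sole source of the ``a.e.\ $t$'' qualifier in the final display of the theorem.
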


\begin{proof}
Note that $F$ in \eqref{F} is convex, differentiable, has superlinear growth at infinity, and fulfills \eqref{G/s^alpha} (as $H$ fulfills \eqref{G/s^alpha}; see Remark~\ref{rem:J}) and the doubling condition \eqref{doubling} (by Lemma~\ref{lem:doubling}). This, along with $\Psi\ge 0$ and $\Psi\in C^1(\R^d)$, implies that all conditions in \cite[Example 11.1.9]{Ambrosio08} are fulfilled. The arguments therein assert that \cite[Theorem 11.1.6]{Ambrosio08} can be applied, which yield $\{\mu_t\}_{t\ge 0}\in AC^2_{\operatorname{loc}}([0,\infty);\Pc_2(\R^d))$ and $\rho:[0,\infty)\to\Dc_2(\R^d)$ such that: (i) $\mu_t = \rho(t) \mathcal L^d$ and $L_F(\rho(t))\in W^{1,1}_{\operatorname{loc}}(\R^d)$ for all $t\ge 0$ (recall \eqref{L_G} for $L_F$), (ii) $\rho$ is a solution (in the sense of distributions) to
       $ \partial_{t} \rho + \Div\big(\rho \bar v \big) = 0$ with $\rho(0) = \rho_0,$
where $\bar v(t,y):= -(\nabla\Psi +\nabla L_F(\rho)/\rho)(t,y)$ satisfies $\int_0^T \int_{\R^d}|\bar v(t,y)|^2\rho(t,y) \, dy dt <\infty$ for all $T>0$; 
see \cite[(11.1.36), (11.1.37), (11.1.38)]{Ambrosio08}. As $\{\mu_t\}_{t\ge 0}\in AC^2_{\operatorname{loc}}([0,\infty);\Pc_2(\R^d))$, $t\mapsto\mu_t$ is continuous on $[0,\infty)$ in $\Pc_2(\R^d)$, whence  weakly continuous on $[0,\infty)$ in $\Pc(\R^d)$.  
This is equivalent to the weak-$*$ continuity of $t\mapsto \rho(t)$ on $[0,\infty)$ in $L^1(\R^d)$. 
By \eqref{L_G} and \eqref{F}, $L_F(s) =\lambda s+\eta s^m$. This, along with \eqref{nabla L_F} (applicable as  $L_F(\rho(t))\in W^{1,1}_{\operatorname{loc}}(\R^d)$), shows that $\bar v = -(\nabla\Psi +\nabla L_F(\rho)/\rho)$ coincides with $v_\rho$ in \eqref{v}. With $\bar v =v_\rho$, we obtain the estimate \eqref{int v^2} and the equation $\partial_{t} \rho + \Div\big(\rho \bar v \big) = 0$ becomes \eqref{FPE mixed}. We thus conclude that $\rho$ is a solution to \eqref{FPE mixed} (cf.\ Definition~\ref{def:sol. to FPE}). 
Finally, note that \eqref{int v^2} implies that $v_\rho(t,\cdot) = \bar v(t,\cdot) =-(\nabla\Psi +\nabla L_F(\rho(t))/\rho(t))$ lies in $L^2(\R^d,\rho(t)\Lc^d)$ for $\Lc^1$-a.e.\ $t\ge 0$. Thus, under \eqref{Psi' linear growth}, Corollary~\ref{coro:subdiff.=} implies the remaining assertions. 
\end{proof}


Next, we will establish a stability result for solutions to the Fokker-Planck equation \eqref{FPE mixed} (i.e., Proposition~\ref{prop:uniqueness FPE} below), from which uniqueness of solutions and a flow property can be derived. The next technical lemma, taken from \cite[Lemma 7.1]{Hwang24}, will prove useful. 

\begin{lemma} \label{lem:W_2'}
Fix any $T>0$. For $i = 1,2$, let $\{\mu_{t}^{i}\}_{t\in(0,T)} \in AC((0,T);\mathcal{P}_{2}(\mathbb{R}^{d}))$ satisfy
    \begin{equation}\label{conti. eqn.}
        \partial_{t} \mu_{t}^{i} + \nabla \cdot (v^{i} \mu_{t}^{i}) = 0\quad \hbox{for}\ \ t\in (0,T)\qquad \hbox{in the sense of distributions}, 
    \end{equation}
    for some $v^{i}:(0,T)\times\R^d\to \R^d$ such that 
    $
    \int_0^T \int_{\R^d} |v^i(t,y)|^2 d\mu_t^i(y) dt < \infty. 
    $
     Then, for $\mathcal{L}^1$-a.e.\ $t \in (0,T)$, 
    \begin{equation}
        \frac{d}{dt}\Wc_{2}^{2}(\mu_{t}^{1}, \mu_{t}^{2})\leq
         2 \int_{\mathbb{R}^{d} \times \mathbb{R}^{d}} (v_{1}(t,x) - v_{2}(t,y))\cdot (x- y) \, d\gamma_{t}(x,y), \quad \forall \gamma_{t} \in \Gamma_{0}(\mu_{t}^{1}, \mu_{t}^{2}). 
    \end{equation}
\end{lemma}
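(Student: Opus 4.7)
The plan is to deduce the claim from two classical results in \cite{Ambrosio08}, as the authors already indicate: \cite[Theorem 8.4.7]{Ambrosio08} (absolute continuity and differentiability of $t \mapsto \Wc_2^2(\mu_t^1,\mu_t^2)$) and \cite[Lemma 4.3.4]{Ambrosio08} (a first-order ``push-forward'' estimate for curves satisfying the continuity equation). The full derivation then follows \cite[Lemma 7.1]{Hwang24}.

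First, since each $\{\mu_t^i\}_{t\in(0,T)} \in AC((0,T);\Pc_2(\R^d))$ satisfies \eqref{conti. eqn.} with a velocity $v^i$ that is space-time $L^2(\mu^i)$-integrable, \cite[Theorem 8.4.7]{Ambrosio08} implies that $t \mapsto \Wc_2^2(\mu_t^1,\mu_t^2)$ is absolutely continuous on $(0,T)$, and therefore differentiable for $\mathcal{L}^1$-a.e. $t$. We fix such a point $t$ of differentiability and an arbitrary $\gamma_t \in \Gamma_0(\mu_t^1,\mu_t^2)$.

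The core step is a flow-perturbation estimate. For $\mathcal{L}^1$-a.e.\ $t$, the continuity equation ensures that the push-forward $(\bm i + h v^i(t,\cdot))_{\#} \mu_t^i$ differs from $\mu_{t+h}^i$ only by an $o(h)$ error in the Wasserstein distance, so that the candidate coupling $((\bm i + h v^1(t,\cdot)) \times (\bm i + h v^2(t,\cdot)))_{\#} \gamma_t$ is, up to an $o(h)$ correction, a (generally non-optimal) transport plan between $\mu_{t+h}^1$ and $\mu_{t+h}^2$. By the definition of $\Wc_2^2$, this yields
\[
\Wc_2^2(\mu_{t+h}^1, \mu_{t+h}^2) \le \int_{\R^d \times \R^d} |x - y + h(v^1(t,x) - v^2(t,y))|^2 \, d\gamma_t(x,y) + o(h).
\]
Expanding the square, using $\int |x-y|^2 \, d\gamma_t(x,y) = \Wc_2^2(\mu_t^1,\mu_t^2)$ (by optimality of $\gamma_t$), dividing by $h > 0$, and sending $h \downarrow 0$ yields the desired upper bound on the right derivative, which coincides with $\tfrac{d}{dt}\Wc_2^2(\mu_t^1,\mu_t^2)$ at points of differentiability. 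This is exactly the claimed inequality.

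The main obstacle is justifying the displayed perturbation estimate rigorously: the map $\bm i + h v^i(t,\cdot)$ is defined only $\mu_t^i$-a.e., its push-forward does not equal $\mu_{t+h}^i$ on the nose, and one must control the difference at order $o(h)$ uniformly enough that it survives expansion of the square. This Cauchy-type estimate along the continuity equation is precisely \cite[Lemma 4.3.4]{Ambrosio08}; coupled with the $AC$-curve characterization in \cite[Chapter 8]{Ambrosio08}, it converts the informal expansion above into a rigorous bound at $\mathcal{L}^1$-a.e.\ $t$. Since this derivation has already been carried out in the exact form we need, we refer to \cite[Lemma 7.1]{Hwang24} for the self-contained proof.
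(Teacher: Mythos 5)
Your proposal is correct and follows exactly the same route as the paper's (very short) proof: both invoke \cite[Theorem 8.4.7]{Ambrosio08} for a.e.\ differentiability of $t\mapsto\Wc_2^2(\mu_t^1,\mu_t^2)$, both rely on the first-order estimate in \cite[Lemma 4.3.4]{Ambrosio08} to justify the push-forward perturbation, and both ultimately defer the full derivation to \cite[Lemma 7.1]{Hwang24}. Your expansion of the heuristic (comparing against the sub-optimal coupling $((\bm i + h v^1)\times(\bm i + h v^2))_\#\gamma_t$, expanding the square, dividing by $h$) is a faithful and useful fleshing-out of the argument the paper leaves implicit.
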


Our stability result for \eqref{FPE mixed} will be stated for the following class of functions. 

\begin{definition}\label{def:C}
For any $\lambda, \eta>0$ and $m>1$, let $\mathcal C$ be the set of $\theta:[0,\infty)\to \Dc_2(\R^d)$ such that
\begin{equation}\label{C condition}
\hbox{$L_F(\theta(t))\in W^{1,1}_{\operatorname{loc}}(\R^d)$ for all $t\ge 0$}\quad \hbox{and}\quad  
\hbox{$v_\theta(t,y)$ given by \eqref{v}  satisfies \eqref{int v^2}.}
\end{equation}
\end{definition}

\begin{proposition}\label{prop:uniqueness FPE}
Fix $\lambda, \eta>0$ and $m>1$. Assume $\Psi\in C^1(\R^d)$, \eqref{Psi' linear growth}, and that $\nabla \Psi:\R^d\to\R^d$ is $K$-Lipschitz for some $K>0$. For $i=1,2$, take $\rho_0^i\in \Dc_2(\R^d)$ such that $J_{\lambda,\eta}(\rho_0^i)<\infty$ and let $\rho^i\in\mathcal C$ be a solution to the Fokker-Planck equation \eqref{FPE mixed} with $\rho_0 = \rho_0^i$ (Definition~\ref{def:sol. to FPE}).
Then, the flow of measures $\mu_{t}^{i} := \rho^{i}(t) \mathcal{L}^{d}\in \Pc_2(\R^d)$, $t\ge 0$, satisfy
\begin{equation}\label{W_2<=W_2}
\Wc_{2}(\mu_{t}^{1}, \mu_{t}^{2}) \leq e^{2K(t-s)} \Wc_{2}(\mu_{s}^{1}, \mu_{s}^{2}),\qquad \forall 0< s\le t. 
\end{equation}
If additionally $\mu^i_t\to\mu^i_0$ in $\Pc_2(\R^d)$ as $t\downarrow 0$, then \eqref{W_2<=W_2} holds also for $s=0$; hence, if $\rho^1_0 = \rho^2_0$ $\Lc^d$-a.e., then $\mu_{t}^{1}=\mu_{t}^{2}$ (or equivalently, $\rho^1(t) = \rho^2(t)$ $\Lc^d$-a.e.) for all $t\ge 0$. 
\end{proposition}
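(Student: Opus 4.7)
The plan is to differentiate $\Wc_{2}^{2}(\mu_{t}^{1},\mu_{t}^{2})$ via Lemma~\ref{lem:W_2'}, bound the two resulting contributions separately, and close the estimate by Grönwall's inequality. Since $\rho^{i}\in\mathcal C$ and $\rho^i$ solves \eqref{FPE mixed}, each flow $\{\mu_{t}^{i}\}_{t>0}$ satisfies the continuity equation \eqref{conti. eqn.} on $(0,\infty)$ with drift $v^{i}:=v_{\rho^{i}}$ from \eqref{v}, for which \eqref{int v^2} supplies the required $L^{2}$-integrability. Hence $\{\mu_t^i\}\in AC^2_{\operatorname{loc}}((0,\infty);\Pc_2(\R^d))$ (by \cite[Theorem 8.3.1]{Ambrosio08}), and Lemma~\ref{lem:W_2'} yields, for $\Lc^{1}$-a.e.\ $t>0$ and any optimal $\gamma_{t}\in\Gamma_{0}(\mu_{t}^{1},\mu_{t}^{2})$,
\[
\frac{d}{dt}\Wc_{2}^{2}(\mu_{t}^{1},\mu_{t}^{2}) \;\le\; 2\int_{\R^{d}\times\R^{d}} \bigl(v^{1}(t,x)-v^{2}(t,y)\bigr)\cdot(x-y)\,d\gamma_{t}(x,y).
\]

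Next, I would split $v^{i}=-\nabla\Psi-\xi^{i}$ with $\xi^{i}:=\nabla L_{F}(\rho^{i}(t))/\rho^{i}(t)$, and estimate each piece separately. For the $\Psi$-contribution, Cauchy-Schwarz and $K$-Lipschitz continuity of $\nabla\Psi$ give $|(\nabla\Psi(x)-\nabla\Psi(y))\cdot(x-y)|\le K|x-y|^{2}$, whose integral against $\gamma_{t}$ equals $K\,\Wc_{2}^{2}(\mu_{t}^{1},\mu_{t}^{2})$. For the $\xi^{i}$-contribution, I would invoke geodesic convexity of $\Ic_{F}$: since $F=H+P$ is convex, has superlinear growth, and satisfies McCann's criterion \eqref{makes I_G convex} (straightforward to verify for the entropy $H$ and the $m$-power $P$), $\Ic_{F}$ is convex along geodesics in $\Pc_{2}(\R^{d})$. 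Lemma~\ref{lem:subdiff.=}(ii), combined with the fact that \eqref{int v^2} forces $\xi^{i}\in L^{2}(\R^{d},\mu_{t}^{i})$ for a.e.\ $t$, identifies $\xi^{i}$ as the unique subgradient of $\Ic_{F}$ at $\mu_{t}^{i}$. Writing the convex-along-geodesics subdifferential inequality \eqref{subdifferential' convex} once for $(\mu_{t}^{1},\mu_{t}^{2})$ and once for $(\mu_{t}^{2},\mu_{t}^{1})$, adding the two inequalities, and using $\bm t_{\nu}^{\mu}\circ\bm t_{\mu}^{\nu}=\bm i$ from Remark~\ref{rem:bm t} to rewrite both integrals against the single optimal plan $\gamma_{t}$, yields the monotonicity
\[
\int_{\R^{d}\times\R^{d}} \bigl(\xi^{1}(x)-\xi^{2}(y)\bigr)\cdot(x-y)\,d\gamma_{t}(x,y)\ge 0.
\]

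Combining the two estimates gives $\frac{d}{dt}\Wc_{2}^{2}(\mu_{t}^{1},\mu_{t}^{2})\le 2K\,\Wc_{2}^{2}(\mu_{t}^{1},\mu_{t}^{2})$ for $\Lc^{1}$-a.e.\ $t>0$, and Grönwall's lemma on $[s,t]$ with $s>0$ produces \eqref{W_2<=W_2}. Under the extra hypothesis $\mu_{t}^{i}\to\mu_{0}^{i}$ in $\Pc_{2}$ as $t\downarrow 0$, passing to the limit $s\downarrow 0$ in \eqref{W_2<=W_2} (using continuity of $\Wc_{2}$) extends it to $s=0$; uniqueness follows immediately, since $\rho_{0}^{1}=\rho_{0}^{2}$ $\Lc^{d}$-a.e.\ yields $\Wc_{2}(\mu_{0}^{1},\mu_{0}^{2})=0$, hence $\mu_{t}^{1}=\mu_{t}^{2}$ for every $t\ge 0$.

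The main obstacle is the monotonicity bound for the $\xi^{i}$-term: although geodesic convexity of $\Ic_F$ for the specific $F=\lambda s\ln s+\tfrac{\eta}{m-1}s^{m}$ is classical, one must carefully combine the two instances of \eqref{subdifferential' convex} with the optimal-map-inverse identity in Remark~\ref{rem:bm t} to recast both contributions as a single integral against $\gamma_{t}$. Everything else, namely the Lipschitz bound on the drift and the final Grönwall step, is routine.
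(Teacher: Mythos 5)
Your proposal is correct and follows essentially the same route as the paper's own proof: differentiate $\Wc_2^2$ via Lemma~\ref{lem:W_2'}, split the drift into the $\nabla\Psi$ part (controlled by $K$-Lipschitz continuity) and the $\nabla L_F(\rho^i)/\rho^i$ part (controlled by the geodesic convexity of $\Ic_F$, Lemma~\ref{lem:subdiff.=}(ii), two applications of \eqref{subdifferential' convex}, and the optimal-map inverse identity from Remark~\ref{rem:bm t}), then close with Grönwall and a continuity argument at $s=0$. The decomposition, the monotonicity estimate, and the final steps all match the paper.
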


\begin{proof}
Fix $T>0$. For $i=1,2$, when restricted to $t\in (0,T)$, \eqref{FPE mixed} (with $\rho=\rho^i$) can be equivalently written as \eqref{conti. eqn.}, where $v^i = v_{\rho^i}$ is given by \eqref{v}. As $v^i$ satisfies \eqref{int v^2} (due to $\rho^i\in\mathcal C$), we have $v^i(t,\cdot)\in L^2(\R^d,\mu^i_t)$ for $\mathcal L^1$-a.e.\ $t\in (0,T)$ and 
 $\|v^i(t,\cdot)\|_{L^2(\R^d,\mu_t)}\in L^2((0,T))$. This particularly implies $\|v^i(t,\cdot)\|_{L^2(\R^d,\mu_t)}\in L^1((0,T))$. As $t\mapsto \mu^i_t$ is weakly continuous on $(0,T)$ (by Definition~\ref{def:sol. to FPE} (i)) and $\rho^i$ fulfills \eqref{conti. eqn.} with $\|v^i(t,\cdot)\|_{L^2(\R^d,\mu_t)}\in L^1((0,T))$, the converse part of \cite[Theorem 8.3.1]{Ambrosio08} asserts $\mu^i\in AC((0,T);\Pc_2(\R^d))$. Hence, 
by Lemma~\ref{lem:W_2'} and $T>0$ being arbitrary, for $\Lc^1$-a.e.\ $t>0$, 
        \begin{align}
            \frac{d}{dt}\Wc^2_2(\mu^1_t,\mu^2_t) &\leq 2  \int_{\R^{d} \times \R^{d}} \bigg(\Big(\lambda \frac{\nabla \rho^{2}}{\rho^2}+\eta \frac{\nabla (\rho^2)^m}{\rho^2}\Big) (t,y)-\Big(\lambda \frac{\nabla \rho^{1}}{\rho^1} +\eta \frac{\nabla (\rho^1)^m}{\rho^1}\Big)(t,x)\bigg)\notag\\
&\hspace{3.7in}  \cdot(x- y) \, d\gamma_{t}(x,y)\notag \\
            &\hspace{1.7in}+2 \int_{\mathbb{R}^{d} \times \mathbb{R}^{d}}(\nabla\Psi(y) - \nabla \Psi(x))\cdot (x-y) \, d\gamma_{t}(x,y),\label{W_2' our case}
        \end{align}
 where $\gamma_t\in \Gamma_0(\mu^1_t,\mu^2_t)$ is now unique and given by $\gamma_t = \big(\bm i\times\bm t_{\mu^1_t}^{\mu^2_t}\big)_\# \mu^1_t$; 
 recall Remark~\ref{rem:bm t}. 
 
Under \eqref{Psi' linear growth}, as argued in the proof of Corollary~\ref{coro:subdiff.=}, $\nabla\Psi\in L^2(\R^d, \mu^i_t)$ for all $t\ge 0$. Also, for any $t\ge0$, 
as $L_F(\rho^i(t)) \in W^{1,1}_{\operatorname{loc}}(\R^d)$ (thanks to $\rho^i\in \mathcal C$), \eqref{nabla L_F} is applicable and it gives $\nabla L_F(\rho^i(t))/\rho^i(t) = \lambda \nabla\rho^i(t)/\rho^i(t) +\eta \nabla(\rho^i(t))^m/\rho^i(t) = -v^i(t,\cdot) - \nabla\Psi\in L^2(\R^d,\mu^i_t)$. Since $v^i(t,\cdot)$ (resp.\ $\nabla \Psi$) lies in $L^2(\R^d,\mu^i_t)$ for $\mathcal L^1$-a.e.\ (resp.\ all) $t\ge 0$, we get $\nabla L_F(\rho^i_t)/\rho^i_t\in L^2(\R^d,\mu^i_t)$ for $\mathcal L^1$-a.e.\ $t\ge 0$. Hence, by Lemma~\ref{lem:subdiff.=} (ii), $\partial\Ic_F(\mu^i_t) = \{\lambda\nabla\rho^i(t)/\rho^i(t)+\eta\nabla (\rho^i(t))^m/\rho^i(t)\}$ for $\Lc^1$-a.e.\ $t\ge 0$. As $F$ in \eqref{F} fulfills \eqref{makes I_G convex}, $\Ic_F$ is convex along geodesics in $\Pc_2(\R^d)$ (by \cite[Proposition 9.3.9]{Ambrosio08}). Thus, by Remark~\ref{rem:subdifferential convex}, for $\Lc^1$-a.e.\ $t\ge 0$,
    \begin{align*} 
        \mathcal{I}_F(\mu^2_t) &\geq \mathcal{I}_F(\mu^1_t) + \int_{\R^d} \bigg(\Big(\lambda \frac{\nabla \rho^{1}}{\rho^1} +\eta \frac{\nabla (\rho^1)^m}{\rho^1}\Big)(t,x)\bigg) \cdot \Big(\bm t_{\mu^1_t}^{\mu^2_t}(x)- x\Big)\, d\mu^1_t(x);\\
        \mathcal{I}_F(\mu^1_t) &\geq \mathcal{I}_F(\mu^2_t) + \int_{\R^d} \bigg(\Big(\lambda \frac{\nabla \rho^{2}}{\rho^2} +\eta \frac{\nabla (\rho^2)^m}{\rho^2}\Big)(t,x)\bigg) \cdot \Big(\bm t_{\mu^2_t}^{\mu^1_t}(x)- x\Big)\, d\mu^2_t(x)\\
        &= \mathcal{I}_F(\mu^2_t) + \int_{\R^d} \bigg(\Big(\lambda \frac{\nabla \rho^{2}}{\rho^2} +\eta \frac{\nabla (\rho^2)^m}{\rho^2}\Big)\Big(t,\bm t_{\mu^1_t}^{\mu^2_t}(x)\Big)\bigg) \cdot \Big(x- \bm t_{\mu^1_t}^{\mu^2_t}(x)\Big)\, d\mu^1_t(x),
        \end{align*}
where the equality follows from $\mu^2_t = \big(\bm t_{\mu^1_t}^{\mu^2_t}\big)_\# \mu^1_t$, the change of variable formula for pushforward measures, and $\bm t_{\mu^2_t}^{\mu^1_t}\circ\bm t_{\mu^1_t}^{\mu^2_t}=\bm i$ $\mu^1_t$-a.e.\ (recall Remark~\ref{rem:bm t}). Summing up the two inequalities yields
      $      0 \geq  \int_{\R^{d} \times \R^{d}} \Big(\big(\lambda \frac{\nabla \rho^{2}}{\rho^2} +\eta \frac{\nabla (\rho^2)^m}{\rho^2}\big)(t,y) - \big(\lambda \frac{\nabla \rho^{1}}{\rho^1}+\eta \frac{\nabla (\rho^1)^m}{\rho^1}\big) (t,x)\Big) \cdot(x- y) \, d\gamma_{t}(x,y)$, 
   where we used $\gamma_t = \big(\bm i\times\bm t_{\mu^1_t}^{\mu^2_t}\big)_\# \mu^1_t$. Plugging this into \eqref{W_2' our case} leads to
        \begin{align*}
            \frac{d}{dt}\Wc^2_2(\mu^1_t,\mu^2_t) &\leq 2 \int_{\mathbb{R}^{d} \times \mathbb{R}^{d}}(\nabla\Psi(y) - \nabla \Psi(x))\cdot (x-y) \, d\gamma_{t}(x,y)\\
            &\le 2 \int_{\mathbb{R}^{d} \times \mathbb{R}^{d}} K|x-y|^2 \, d\gamma_{t}(x,y) = 2K \Wc_2^2(\mu^1_t,\mu^2_t),\quad \hbox{for $\Lc^1$-a.e.\ $t>0$}, 
        \end{align*}
where the second inequality follows from $\nabla\Psi$ being $K$-Lipschitz. As $t\mapsto \Wc^2_2(\mu^1_t,\mu^2_t)$ is continuous on $[s,\infty)$ for all $s>0$ (thanks to $\mu^i\in AC((0,T);\Pc_2(\R^d))$ for all $T>0$),  Gr\"{o}nwall's inequality is applicable, so that the previous inequality implies \eqref{W_2<=W_2}. If additionally $\mu^i_t\to\mu^i_0$ in $\Pc_2(\R^d)$ as $t\downarrow 0$, then $t\mapsto \Wc^2_2(\mu^1_t,\mu^2_t)$ is continuous on $[0,\infty)$, such that \eqref{W_2<=W_2} holds even for $s=0$. 
\end{proof}

The first consequence of Proposition~\ref{prop:uniqueness FPE} is the uniqueness of solutions to \eqref{FPE mixed}. 

\begin{theorem}\label{thm:uniqueness FPE}
Fix $\lambda\in (0,e/2)$, $\eta>0$ and $m>1$. Suppose that $\Psi\in C^1(\R^d)$, \eqref{Psi' linear growth} holds, and $\nabla \Psi:\R^d\to\R^d$ is Lipschitz. For any $\rho_0\in \Dc_2(\R^d)$ such that $J_{\lambda,\eta}(\rho_0)<\infty$, $\rho:[0,\infty)\to\Dc_2(\R^d)$ in Theorem~\ref{thm:sol. to FPE} is the unique solution to the Fokker-Planck equation \eqref{FPE mixed} among 
\begin{equation}\label{C_{rho_0}}
\mathcal C_{\rho_0} := \{ \theta\in \mathcal C: \theta(t)\Lc^d\to\rho_0\Lc^d\ \hbox{in}\ \Pc_2(\R^d)\ \hbox{as}\ t\downarrow 0\}. 
\end{equation}
Specifically, if $\tilde\rho\in\mathcal C_{\rho_0}$ is a solution to \eqref{FPE mixed}, then $\tilde\rho(t)=\rho(t)$ $\Lc^d$-a.e.\ for all $t\ge 0$.
\end{theorem}

\begin{proof}
By Theorem~\ref{thm:sol. to FPE}, $\rho$ is a solution to \eqref{FPE mixed} such that $L_F(\rho(t)) \in W^{1,1}_{\operatorname{loc}}(\R^d)$ for all $t\ge 0$ (as part of Definition~\ref{def:sol. to FPE}), \eqref{int v^2} holds,  and $\mu_t := \rho(t) \mathcal L^d$ lies in $AC^2_{\operatorname{loc}}([0,\infty);\Pc_2(\R^d))$. The last condition indicates that $t\mapsto\mu_t$ is continuous on $[0,\infty)$ in $\Pc_2(\R^d)$, which in turn implies the weak-$*$ continuity of $t\mapsto\rho(t)$ on $[0,\infty)$ in $L^1(\R^d)$ and $\mu_t\to\mu_0$ in $\Pc_2(\R^d)$ as $t\downarrow 0$. We thus conclude $\rho\in\mathcal C_{\rho_0}$. The desired uniqueness then follows from Proposition~\ref{prop:uniqueness FPE}.
\end{proof}

The second consequence of Proposition~\ref{prop:uniqueness FPE} is the flow property of the (unique) solution to \eqref{FPE mixed}. To state this clearly, for any $\rho_0\in\Dc_2(\R^d)$ such that $J_{\lambda,\eta}(\rho_0)<\infty$, we introduce the notation
\[
S(t)[\rho_0] := \rho(t)\in\Dc_2(\R^d)\quad \forall t\ge 0,
\]
where $\rho:[0,\infty)\to\Dc_2(\R^d)$ is the solution to \eqref{FPE mixed} obtained in Theorem~\ref{thm:sol. to FPE}. 

\begin{corollary}\label{coro:flow property}
Assume the conditions in Proposition~\ref{prop:uniqueness FPE}. Then, for any $\rho_0\in \Dc_2(\R^d)$ such that $J_{\lambda,\eta}(\rho_0)<\infty$, $S(t+s)[\rho_0] = S(t)[S(s)[\rho_0]]$ $\Lc^d$-a.e.\ for all $s,t\ge 0$  
\end{corollary}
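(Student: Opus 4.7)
\smallskip
\noindent\textbf{Proof proposal.} The plan is to deduce the semigroup identity from the uniqueness statement Theorem~\ref{thm:uniqueness FPE} by a standard time-translation argument. Fix $s\ge 0$ and let $\rho(\cdot):=S(\cdot)[\rho_0]$ be the solution to \eqref{FPE mixed} furnished by Theorem~\ref{thm:sol. to FPE}. Define the shifted curve
\[
\tilde\rho(t) := \rho(t+s)\in\Dc_2(\R^d),\qquad t\ge 0,
\]
and verify that $\tilde\rho$ is a solution of \eqref{FPE mixed} (in the sense of Definition~\ref{def:sol. to FPE}) with initial datum $\rho(s)$. The weak-$*$ continuity of $t\mapsto\rho(t)$ in $L^1(\R^d)$ and the property $L_F(\rho(t))\in W^{1,1}_{\operatorname{loc}}(\R^d)$ for every $t\ge 0$ are immediately inherited by $\tilde\rho$. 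For the distributional identity, I would take an arbitrary $\varphi\in C_c^\infty((0,\infty)\times\R^d)$ and apply \eqref{distri. sense} to the test function $\tilde\varphi(t,y):=\varphi(t-s,y)$, which is supported in a compact subset of $(s,\infty)\times\R^d$; the change of variables $r=t-s$ then yields \eqref{distri. sense} for $\tilde\rho$.

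Next, I would check that $\tilde\rho\in\mathcal C_{\rho(s)}$. Since $\rho\in\mathcal C$ (by Theorem~\ref{thm:sol. to FPE} and the fact that $v_\rho$ satisfies \eqref{int v^2}), a direct truncation gives
\[
\int_0^T\!\int_{\R^d}|v_{\tilde\rho}(t,y)|^2\tilde\rho(t,y)\,dy\,dt
=\int_s^{s+T}\!\int_{\R^d}|v_\rho(r,y)|^2\rho(r,y)\,dy\,dr<\infty
\]
for every $T>0$, so $\tilde\rho\in\mathcal C$. The initial-time convergence $\tilde\rho(t)\Lc^d\to\rho(s)\Lc^d$ in $\Pc_2(\R^d)$ as $t\downarrow 0$ is exactly the continuity of the curve $r\mapsto\rho(r)\Lc^d$ at $r=s$, which holds because $\{\rho(r)\Lc^d\}_{r\ge 0}\in AC^2_{\operatorname{loc}}([0,\infty);\Pc_2(\R^d))$ by Theorem~\ref{thm:sol. to FPE}.

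To close the argument via Theorem~\ref{thm:uniqueness FPE} with $\rho(s)$ as new initial datum, I still need $\rho(s)\in\Dc_2(\R^d)$ (immediate) and $J_{\lambda,\eta}(\rho(s))<\infty$. The latter is the only non-routine point: it follows from the fact that the minimizing-movement construction underlying Theorem~\ref{thm:sol. to FPE} (via \cite[Theorem~11.1.6]{Ambrosio08}) automatically produces a curve along which the driving functional is non-increasing, so that $\Jc_{\lambda,\eta}(\rho(s)\Lc^d)\le\Jc_{\lambda,\eta}(\rho_0\Lc^d)=J_{\lambda,\eta}(\rho_0)<\infty$ (this is the content of the Lyapunov property alluded to in the introduction, cf.\ Lemma~\ref{lem:J decreasing}). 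Once $\tilde\rho\in\mathcal C_{\rho(s)}$ is a solution of \eqref{FPE mixed} starting from $\rho(s)$, Theorem~\ref{thm:uniqueness FPE} forces $\tilde\rho(t)=S(t)[\rho(s)]$ $\Lc^d$-a.e.\ for every $t\ge 0$, which unfolds into
\[
S(t+s)[\rho_0]=\rho(t+s)=\tilde\rho(t)=S(t)[S(s)[\rho_0]]\quad\Lc^d\text{-a.e., for all }s,t\ge 0.
\]

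The main obstacle is not the translation step itself (which is mechanical) but confirming $J_{\lambda,\eta}(\rho(s))<\infty$ so that Theorem~\ref{thm:uniqueness FPE} can be invoked at the intermediate time $s$; this is handled by the monotonicity of $t\mapsto J_{\lambda,\eta}(\rho(t))$ built into the minimizing-movement construction.
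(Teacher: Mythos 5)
Your proposal is correct and follows essentially the same route as the paper: shift the curve, check it is a solution, and invoke the uniqueness result. One thing you do \emph{more} carefully than the paper's own one-paragraph proof is flag that $J_{\lambda,\eta}(S(s)[\rho_0])<\infty$ must be verified before $S(t)[S(s)[\rho_0]]$ is even well-defined and before Theorem~\ref{thm:uniqueness FPE} can be invoked at the restarted time; the paper treats this as implicit. Since Lemma~\ref{lem:J decreasing} appears \emph{after} Corollary~\ref{coro:flow property} and its second assertion (the monotonicity of $t\mapsto J_{\lambda,\eta}(\rho(t))$) is itself proved \emph{using} the flow property, you should be explicit that you are only using the first assertion \eqref{J_t<=J_0}, which is obtained directly from the minimizing-movement estimate \eqref{J^n<=J^n-1} and does not depend on the corollary being proved; otherwise the argument would look circular. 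With that caveat made explicit, your proof is complete and, if anything, more careful than the paper's.
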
  

\begin{proof}
Fix $s\ge 0$. As $t\mapsto S(t)[\rho_0]$ is a solution to \eqref{FPE mixed} satisfying the properties in Theorem~\ref{thm:sol. to FPE}, we deduce from Definition~\ref{def:sol. to FPE} that $t\mapsto S(s+t)[\rho_0]$ is also a solution to \eqref{FPE mixed} (with initial density $S(s)[\rho_0]$) satisfying the properties in Theorem~\ref{thm:sol. to FPE}. Now that $t\mapsto S(s+t)[\rho_0]$ and $t\mapsto S(t)[S(s)[\rho_0]]$ are both solutions to \eqref{FPE mixed} satisfying the properties in Theorem~\ref{thm:sol. to FPE} and they share the same initial density $S(s)[\rho_0]$, Proposition~\ref{prop:uniqueness FPE} implies $S(t+s)[\rho_0] = S(t)[S(s)[\rho_0]]$ $\Lc^d$-a.e.\ for all $t\ge 0$. 
\end{proof}


\section{Solving the Density-Dependent Langevin SDE \eqref{SDE mixed}}\label{sec:sol. to Y}
In this section, we will build a solution $Y$ to SDE \eqref{SDE mixed} using the solution $\rho:[0,\infty)\to \Dc_2(\R^d)$ to the Fokker-Planck equation \eqref{FPE mixed} from Theorem~\ref{thm:sol. to FPE}. To this end, we need to first establish suitable $L^p(\R^d)$ estimates for $\rho(t)$, $t\ge 0$. Recall that $\rho(t)$ is constructed using \cite[Theorem 11.1.6]{Ambrosio08}, which relies on the ``minimizing movement'' scheme in \cite[Section 11.1.3]{Ambrosio08}, to be briefly reviewed below. 

Fix any time step $\tau>0$. For any $\mu,\nu\in\Pc_2(\R^d)$, consider the function 
\begin{equation}\label{Theta}
\Theta(\tau,\mu;\nu) := \frac{1}{2\tau} \Wc_2^2(\nu,\mu)+\Jc_{\lambda,\eta}(\nu).
\end{equation}
Given $\mu_0\in \Pc_2(\R^d)$ such that $\Jc_{\lambda,\eta}(\mu_0)<\infty$, set $M_\tau^0 := \mu_0$ and define $\{M_\tau^n\}_{n\in\N}$ recursively by 
\begin{equation}\label{M in argmin}
M_\tau^n \in \argmin_{} \{ \Theta(\tau,M_\tau^{n-1};\nu): \nu\in\Pc_2(\R^d)\}\quad \forall n\in\N. 
\end{equation}
In view of $\eqref{Jc}$ and $\eqref{J}$, $M_\tau^n$ must admit a density function, i.e., 
\begin{equation}\label{M has density}
\hbox{for all $n\ge 0$},\quad  M_\tau^n  = \rho^n_\tau \Lc^d\quad \hbox{for some}\ \ \rho^n_\tau\in\Dc_2(\R^d).
\end{equation}
Piecewise-constant curves $\overline M_\tau:(0,\infty)\to \Pc_2(\R^d)$ and $\overline \rho_\tau:(0,\infty)\to\Dc_2(\R^d)$ can then be defined by
\begin{equation}\label{overline M}
\overline M_\tau(t) := M_\tau^n\quad \hbox{and}\quad \overline \rho_\tau(t) := \rho_\tau^n\qquad \hbox{for}\ t\in((n-1)\tau,\tau],\ n\in\N.
\end{equation}
For each $n\in\N$, let $\bm t_\tau^n$ be the optimal transport map from $M^n_\tau$ to $M^{n-1}_\tau$ (recall Remark~\ref{rem:bm t}) and define the discrete velocity vector $V_\tau^n := ({\bm i - \bm t_\tau^n})/{\tau}$. By \cite[Lemma 10.1.2, Theorem 10.4.2]{Ambrosio08}, 
\begin{equation}\label{-V in subdiff. J}
-V_\tau^n = {(\bm t_\tau^n-\bm i)}/{\tau} \in \partial \Jc_{\lambda,\eta}(M^n_\tau). 
\end{equation}

\begin{remark}\label{rem:Thm 11.1.6}
As $\Jc_{\lambda,\eta}$ satisfies all the conditions in \cite[Theorem 11.1.6]{Ambrosio08} (thanks to $F$ fulfilling all the conditions in \cite[Example 11.1.9]{Ambrosio08}), the theorem states that there exist $\{\tau_k\}_{k\in\N}$ with $\tau_k\downarrow 0$ and $\mu:(0,\infty)\to\Pc_2(\R^d)$ such that: (a) $\overline M_{\tau_k}(t)\to \mu_t$ weakly in $\Pc(\R^d)$ for all $t\ge0$; (b) $\mu_t = \rho(t)\Lc^d$, $t>0$, where $\rho:[0,\infty)\to \Dc_2(\R^d)$ is the solution to 
\eqref{FPE mixed} in Theorem~\ref{thm:sol. to FPE}.
Now, for each fixed $t>0$, $\overline M_{\tau_k}(t) = \overline\rho_{\tau_k}(t,\cdot)\Lc^d$ by \eqref{M has density} and \eqref{overline M}. Hence, (a) and (b) above assert $\overline\rho_{\tau_k}(t)\Lc^d\to\rho(t)\Lc^d$ weakly in $\Pc(\R^d)$, which is equivalent to 
$\int_{\R^d} \phi(y) \overline\rho_{\tau_k}(t,y) dy \to \int_{\R^d} \phi(y) \rho(t,y) dy$ for all $\phi\in C^\infty_c(\R^d).$ 
\end{remark}

Looking into the minimizing movement scheme closely yields an $L^m(\R^d)$ bound for $\rho(t)$. 

\begin{proposition}\label{prop:L^m norm bdd}
Fix $\lambda\in(0,e/2)$, $\eta>0$, and $m>1$. Assume $\Psi\in C^2(\R^d)$ with $(\Delta \Psi)^+ \in L^{\infty}(\mathbb{R}^{d})$, \eqref{Psi' linear growth}, and 
\begin{equation}\label{Psi increasing}
\exists R>0\ \ \hbox{s.t.}\ \ \Psi(k y) \ge \Psi(y)\quad \forall k\ge 1\ \hbox{and $|y|\ge R$}. 
\end{equation}
For any $\rho_0\in\Dc_2(\R^d)$ such that $J_{\lambda,\eta}(\rho_0)<\infty$, let $\rho:[0,\infty)\to\Dc_2(\R^d)$ be the solution to \eqref{FPE mixed} obtained in Theorem~\ref{thm:sol. to FPE}. Then, $\rho(t) \in L^{m}(\mathbb{R}^{d})$ for all $t\ge 0$. Moreover,  
   \begin{equation}
        ||\rho(t)||_{L^{m}(\mathbb{R}^{d})} \leq e^{\frac{m-1}{m} A t} ||\rho_{0}||_{L^{m}(\mathbb{R}^{d})}\quad \forall t\ge 0,\quad \hbox{with}\  \ A:= \|(\Delta\Psi)^+\|_{L^\infty(\R^d)}. 
    \end{equation}
\end{proposition}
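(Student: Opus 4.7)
The strategy is to establish the $L^m$ bound at the discrete JKO level (Remarks~\ref{rem:Thm 11.1.6}--\ref{rem:bar rho to rho}) and pass to the limit via lower semicontinuity of $\|\cdot\|_{L^m(\R^d)}$ under weak convergence in $L^1(\R^d)$ (which holds by its convexity). The formal model at the continuous PDE level is to multiply \eqref{FPE mixed} by $m\rho^{m-1}$ and integrate by parts: the drift contributes $(m-1)\int \rho^m\Delta\Psi\, dy\le(m-1)A\|\rho\|_{L^m(\R^d)}^m$ thanks to $\Delta\Psi\le A$, while the diffusive terms yield the nonpositive $-m(m-1)\int\rho^{m-2}(\lambda+\eta m\rho^{m-1})|\nabla\rho|^2\,dy$. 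This formally gives $\frac{d}{dt}\|\rho\|_{L^m(\R^d)}^m\le(m-1)A\|\rho\|_{L^m(\R^d)}^m$, whose Gr\"onwall integration produces the claimed exponential bound.

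I would not try to justify this PDE computation directly, since the $L^m$-regularity we seek is precisely what enables the chain rule. Instead, I would argue at the discrete level. For each $\tau>0$, the monotonicity of the JKO scheme gives $\Jc_{\lambda,\eta}(\rho_\tau^n\Lc^d)\le\Jc_{\lambda,\eta}(\rho_0\Lc^d)<\infty$, so $\rho_\tau^n\in L^m(\R^d)$ for every $n$ by Remark~\ref{rem:J}. The key step is the discrete analogue of the PDE calculation. Using \eqref{-V in subdiff. J} and Corollary~\ref{coro:subdiff.=}, one has $\rho_\tau^n V_\tau^n+\rho_\tau^n\nabla\Psi+\nabla L_F(\rho_\tau^n)=0$ in the sense of distributions. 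Testing against a regularization of $m(\rho_\tau^n)^{m-1}$ (smooth truncation at level $K$ and spatial cutoff $\chi_R$, then sending $K,R\to\infty$ using \eqref{Psi' linear growth}) and integrating by parts yields
\begin{equation*}
m\int\rho_\tau^n V_\tau^n\cdot\nabla(\rho_\tau^n)^{m-1}\,dy=(m-1)\int(\rho_\tau^n)^m\Delta\Psi\,dy-m(m-1)\int(\rho_\tau^n)^{m-2}(\lambda+\eta m(\rho_\tau^n)^{m-1})|\nabla\rho_\tau^n|^2\,dy\le(m-1)A\|\rho_\tau^n\|_{L^m(\R^d)}^m.
\end{equation*}

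To convert the left-hand side into a discrete time increment, I would use the transport identity $\int\varphi(\rho_\tau^{n-1}-\rho_\tau^n)\,dy=\int[\varphi(\bm t_\tau^n(y))-\varphi(y)]\,d\rho_\tau^n(y)$, valid because $(\bm t_\tau^n)_\# M_\tau^n=M_\tau^{n-1}$, with $\varphi$ a regularization of $(\rho_\tau^n)^{m-1}$. Expanding $\varphi\circ\bm t_\tau^n-\varphi=\int_0^1\nabla\varphi((1-s)\bm i+s\bm t_\tau^n)\cdot(\bm t_\tau^n-\bm i)\,ds=-\tau\int_0^1\nabla\varphi\cdot V_\tau^n\,ds$ and combining with the convexity inequality $\|\rho_\tau^n\|_{L^m}^m-\|\rho_\tau^{n-1}\|_{L^m}^m\le m\int(\rho_\tau^n)^{m-1}(\rho_\tau^n-\rho_\tau^{n-1})\,dy$ (from convexity of $r\mapsto r^m$) yields, up to an error of order $\tau^2\|V_\tau^n\|_{L^2(\rho_\tau^n\Lc^d)}^2$,
\begin{equation*}
\|\rho_\tau^n\|_{L^m(\R^d)}^m\le(1+(m-1)A\tau)\|\rho_\tau^{n-1}\|_{L^m(\R^d)}^m+C\tau^2\|V_\tau^n\|_{L^2(\rho_\tau^n\Lc^d)}^2.
\end{equation*}
The accumulated error is summable via the standard JKO energy estimate $\sum_{n=1}^N\tau\|V_\tau^n\|_{L^2(\rho_\tau^n\Lc^d)}^2\le 2[\Jc_{\lambda,\eta}(\rho_0\Lc^d)-\inf\Jc_{\lambda,\eta}]<\infty$, so iterating gives $\|\overline\rho_\tau(t)\|_{L^m}^m\le(1+(m-1)A\tau)^{\lfloor t/\tau\rfloor}\|\rho_0\|_{L^m}^m+o(1)$. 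Sending $\tau=\tau_k\to 0$ and invoking the weak convergence $\overline\rho_{\tau_k}(t)\Lc^d\to\rho(t)\Lc^d$ (Remark~\ref{rem:Thm 11.1.6}) together with the lower semicontinuity noted above yields $\|\rho(t)\|_{L^m}^m\le e^{(m-1)At}\|\rho_0\|_{L^m}^m$, and the claim follows by taking $m$-th roots.

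The hard part will be the rigorous integration-by-parts with the non-smooth, unbounded test $(\rho_\tau^n)^{m-1}$: a careful regularization via truncation at levels $K\to\infty$ and spatial cutoffs $\chi_R\to 1$ is required, with tail contributions controlled by \eqref{Psi' linear growth} and the radial monotonicity \eqref{Psi increasing} (which prevents mass from escaping to infinity and thereby ensures that the tails of $\rho_\tau^n$ vanish in a uniform-in-$(\tau,n)$ manner before the limit is taken).
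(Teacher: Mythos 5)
Your overall strategy matches the paper's: derive the bound at the discrete JKO level, then pass to the limit $\tau_k\downarrow 0$ using weak $L^m$-compactness and lower semicontinuity of the $L^m$-norm, and control the drift contribution by an integration by parts whose boundary term is killed by the radial monotonicity \eqref{Psi increasing}. The formal PDE calculation you wrote as a guide is also the correct one. However, there is a genuine gap in the discrete one-step estimate.

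You obtain the time increment by combining the transport identity $\int\varphi(\rho_\tau^{n-1}-\rho_\tau^n)\,dy=\int[\varphi\circ\bm t_\tau^n-\varphi]\,\rho_\tau^n\,dy$ with a first-order Taylor expansion of $\varphi\circ\bm t_\tau^n-\varphi$ (with $\varphi=m(\rho_\tau^n)^{m-1}$), claiming the remainder is $O(\tau^2\|V_\tau^n\|^2_{L^2(\rho_\tau^n\Lc^d)})$. That claim is not justified: it would require $\nabla(\rho_\tau^n)^{m-1}$ to be Lipschitz with a $\tau$-uniform constant, i.e.\ second-order regularity of $\rho_\tau^n$ that is not available (you only know $\rho_\tau^n\in L^m\cap W^{1,1}_{\operatorname{loc}}$). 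Worse, the Taylor remainder carries no favorable sign here, since $y\mapsto(\rho_\tau^n)^{m-1}(y)$ is not convex; so even if it were small it could not simply be dropped. The paper avoids this entirely: because $P(s)=\frac{\eta}{m-1}s^m$ satisfies McCann's criterion \eqref{makes I_G convex}, the functional $\Ic_P$ is geodesically convex in $\Pc_2(\R^d)$, and Corollary~\ref{coro:subdiff.=} identifies $\partial\Ic_P(M_\tau^n)=\{\eta\nabla(\rho_\tau^n)^m/\rho_\tau^n\}$. The geodesic convex-subgradient inequality from Remark~\ref{rem:subdifferential convex} then gives
\begin{equation*}
\mathcal I_P(M_\tau^{n-1})\ \geq\ \mathcal I_P(M_\tau^{n})+\eta\tau\int_{\R^d}\nabla(\rho_\tau^n)^m\cdot(-V_\tau^n)\,dy
\end{equation*}
\emph{exactly}, with no remainder. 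Substituting $-V_\tau^n=\nabla\Psi+\lambda\nabla\ln\rho_\tau^n+\eta\nabla(\rho_\tau^n)^m/\rho_\tau^n$ and dropping the nonnegative diffusive cross terms lands you directly on the integration-by-parts step and the recursion $\|\rho_\tau^n\|_{L^m}^m\le(1-\tau(m-1)A)^{-1}\|\rho_\tau^{n-1}\|_{L^m}^m$. In short, the missing ingredient in your argument is the displacement convexity of $\Ic_P$: it replaces the uncontrollable Taylor remainder with an exact one-sided inequality, and it is a genuinely stronger input than pointwise convexity of $r\mapsto r^m$ plus the pushforward identity.
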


\begin{proof} 
By Remark~\ref{rem:J}, $J_{\lambda,\eta}(\rho_0)<\infty$ readily implies $\rho(0)=\rho_0\in L^m(\R^d)$. 
For any $\tau>0$, recall $\{M^n_\tau\}_{n\in\N\cup\{0\}}$ in $\Pc_2(\R^d)$ from \eqref{M in argmin}, where we take $\mu_0 = \rho_0\mathcal L^d$. For any $n\in\N$,
\begin{align}
J_{\lambda,\eta}(\rho^n_\tau) &= \Jc_{\lambda,\eta}(M^n_\tau) \le \frac{1}{2\tau} \Wc_2^2(M^n_\tau,M^{n-1}_\tau)+\Jc_{\lambda,\eta}(M^n_\tau)\notag\\
& = \Theta(\tau, M^{n-1}_\tau; M^n_\tau)\le \Theta(\tau, M^{n-1}_\tau; M^{n-1}_\tau) = \Jc_{\lambda,\eta}(M^{n-1}_\tau) = J_{\lambda,\eta}(\rho^{n-1}_\tau),\label{J^n<=J^n-1}
\end{align}
where the first and last equalities follow from \eqref{M has density} and \eqref{Jc}, the second inequality is due to \eqref{M in argmin}, and the second equality holds by the definition of $\Theta$ in \eqref{Theta}. This implies $J_{\lambda,\eta}(\rho^n_\tau)\le J_{\lambda,\eta}(\rho^0_\tau) = J_{\lambda,\eta}(\rho_0)   <\infty$ and thus $\rho^n_\tau\in L^m(\R^d)$ (by Remark~\ref{rem:J}) for all $n\in\N$. Then, by \eqref{Ic} and \eqref{H and P}, \begin{equation}\label{I_P<infty}
\Ic_P(M_\tau^n) = \frac{\eta}{m-1}\int_{\R^d} (\rho^n_\tau)^m dy<\infty,\quad \forall n\in\N\cup\{0\}.  
\end{equation}

Now, for each $n\in\N$, thanks to \eqref{-V in subdiff. J} and \eqref{M has density}, Corollary~\ref{coro:subdiff.=} implies 
\begin{equation}\label{dudu}
-V^{n}_\tau = \nabla \Psi+\lambda\nabla\ln\rho_\tau^n +\eta\nabla (\rho_\tau^n)^m/\rho_\tau^n\qquad \hbox{and}\qquad \partial\Ic_P(M_\tau^n) = \{\eta\nabla (\rho_\tau^n)^m/\rho_\tau^n\}.  
\end{equation}
As $P$ in \eqref{H and P} fulfills \eqref{makes I_G convex}, $\Ic_P$ is convex along geodesics in $\Pc_2(\R^d)$ (by \cite[Proposition 9.3.9]{Ambrosio08}). Hence, by Remark~\ref{rem:subdifferential convex} and $\partial\Ic_P(M_\tau^n) = \{\eta\nabla (\rho_\tau^n)^m/\rho_\tau^n\}$,
    \begin{align} \label{I_P>=I_P+}
        \mathcal{I}_P(M_{\tau}^{n-1}) &\geq \mathcal{I}_P(M_{\tau}^{n}) +\eta \int_{\mathbb{R}^{d}} \nabla (\rho_\tau^n)^m(y) \cdot (\bm{t}_{\tau}^{n}(y) - y)\, dy\notag\\
        & =  \mathcal{I}_P(M_{\tau}^{n}) + \eta\tau \int_{\mathbb{R}^{d}} \nabla (\rho_\tau^n)^m(y) \cdot \big(\nabla \Psi+\lambda\nabla\ln\rho_\tau^n +\eta\nabla (\rho_\tau^n)^m/\rho_\tau^n\big)(y)\, dy\notag\\
        &\ge \mathcal{I}_P(M_{\tau}^{n}) + \eta\tau \int_{\mathbb{R}^{d}} (\nabla (\rho_\tau^n)^m\cdot \nabla \Psi)(y)\, dy,
    \end{align}
where the second line stems from the definition $V^n_\tau = ({\bm i - \bm t_\tau^n})/{\tau}$ 
and the first part of \eqref{dudu}, and the last inequality holds as 
$
\nabla (\rho_\tau^n)^m(y) \cdot \big(\lambda\nabla\ln\rho_\tau^n +\eta\nabla (\rho_\tau^n)^m/\rho_\tau^n\big) = \lambda m (\rho_\tau^n)^{m-2}|\nabla\rho_\tau^n|^2 +\eta|\nabla(\rho^n_\tau)^m|^2\ge 0. 
$
Note that the integral on the right-hand side of \eqref{I_P>=I_P+} is well-defined. Indeed, under \eqref{Psi' linear growth}, we have $\nabla\Psi \in L^2(\R^d,\rho^n_\tau\Lc^d)$, as argued in the proof of Corollary~\ref{coro:subdiff.=}. Also, by Definition~\ref{def:subdifferential}, $\eta\nabla (\rho_\tau^n)^m/\rho_\tau^n\in  \partial\Ic_P(M_\tau^n)$ readily implies $\nabla (\rho_\tau^n)^m/\rho_\tau^n\in  L^2(\R^d,\rho^n_\tau\Lc^d)$. Thus, by H\"{o}lder's inequality,
\begin{align}\label{bala}
\int_{\mathbb{R}^{d}} \big|\nabla (\rho_\tau^n)^m\cdot \nabla \Psi\big|\, dy &= \int_{\mathbb{R}^{d}} \Big|({\nabla (\rho_\tau^n)^m}/{\rho_\tau^n})\cdot \nabla \Psi\Big|\ \rho_\tau^n\, dy\notag\\ 
&\le \| {\nabla (\rho_\tau^n)^m}/{\rho_\tau^n}\|_{L^2(\R^d,\rho_\tau^n\Lc^d)} \|\nabla\Psi\|_{L^2(\R^d,\rho_\tau^n\Lc^d)}<\infty.
\end{align}
It follows that
        \begin{align}
            \int_{\mathbb{R}^{d}} \nabla (\rho_{\tau}^{n})^{m} &\cdot \nabla \Psi \, dy = \lim_{R \rightarrow \infty} \int_{B_{R}(0)} \nabla (\rho_{\tau}^{n})^{m} \cdot \nabla \Psi \, dy \notag\\
            &= \lim_{R \rightarrow \infty} \int_{\partial B_{R}(0)} (\rho_{\tau}^{n})^{m} ( \nabla \Psi \cdot \mathbf{n}) \, dy - \int_{B_{R}(0)} (\rho_{\tau}^{n})^{m} \Delta \Psi \, dy \notag\\
            &\geq \lim_{R \rightarrow \infty} - \int_{B_{R}(0)} (\rho_{\tau}^{n})^{m} \Delta \Psi \, dy \ge - \int_{\mathbb{R}^{d}} (\rho_{\tau}^{n})^{m} \Delta \Psi \, dy \geq - A \int_{\mathbb{R}^{d}} ( \rho_{\tau}^{n})^{m} \, dy,
            \label{esti}
        \end{align}
        with $A:=\|(\Delta\Psi)^+\|_{L^\infty(\R^d)}$. Here, the first equality follows from the dominated convergence theorem (applicable due to \eqref{bala}), the second equality is due to integration by parts in $\R^d$ (with $\mathbf{n}$ therein the outward unit normal vector on $\partial_R B(0)$), the first inequality holds as \eqref{Psi increasing} implies $\nabla \Psi \cdot \mathbf{n} \ge0$, and the second inequality follows from the reverse Fatou lemma (applicable due to $\rho^n_\tau\in L^m(\R^d)$ and $(\Delta\Psi)^+\in L^\infty(\R^d)$). By plugging \eqref{I_P<infty} into \eqref{I_P>=I_P+} and using the estimate \eqref{esti}, we get 
\begin{align*}
\int_{\R^d} (\rho^{n-1}_\tau)^m\, dy \ge \int_{\R^d} (\rho^n_\tau)^m\, dy -\tau(m-1) A \int_{\mathbb{R}^{d}} ( \rho_{\tau}^{n})^{m} \, dy = \big(1-\tau(m-1)A\big) \int_{\R^d} (\rho^n_\tau)^m\, dy. 
\end{align*}
Hence, for any $\tau< 1/((m-1)A)$, we have $\int_{\R^d} (\rho^n_\tau)^m\, dy \le \big(1-\tau(m-1)A\big)^{-1} \int_{\R^d} (\rho^{n-1}_\tau)^m\, dy$ for all $n\in\N$. Applying this estimate recursively yields
$\int_{\R^d} (\rho^n_\tau)^m\, dy \le \big(1-\tau(m-1)A\big)^{-n} \int_{\R^d} \rho_{0}^m\, dy$ for all $n\in\N$. 
By recalling $\overline\rho_\tau(t)$ in \eqref{overline M}, this implies
$\int_{\R^d} (\overline \rho_\tau(t))^m\, dy \le \big(1-\tau(m-1)A\big)^{-\lceil t/\tau \rceil} \int_{\R^d} \rho_{0}^m\, dy$ for all $t>0,$
where $\lceil \cdot \rceil$ denotes the ceiling function, which in turn gives
\begin{equation}\label{bar rho estimate}
\|\overline \rho_\tau(t)\|_{L^m(\R^d)} \le \big(1-\tau(m-1)A\big)^{-\lceil t/\tau \rceil/m} \|\rho_{0}\|_{L^m(\R^d)},\qquad\forall t>0,
\end{equation}
Take $\{\tau_k\}_{k\in\N}$ with $\tau_k\downarrow 0$ from Remark~\ref{rem:Thm 11.1.6}. 
For any fixed $t>0$, \eqref{bar rho estimate} implies that $\{\overline \rho_{\tau_k}(t)\}_{k\in\N}$ is bounded in $L^m(\R^d)$. Hence, there is a subsequence (without relabeling) that converges weakly in $L^m(\R^d)$, i.e., there exists $\rho^*\in L^m(\R^d)$ such that
$\int_{\R^d} \phi(y) \overline\rho_{\tau_k}(t,y) dy \to \int_{\R^d} \phi(y) \rho^*(y) dy$ for all $\phi\in L^{\frac{m}{m-1}}(\R^d)$. 
This, along with the convergence at the end of Remark~\ref{rem:Thm 11.1.6}, implies 
$\rho^* = \rho(t)$ $\Lc^d$-a.e., whence $ \overline\rho_{\tau_k}(t)\to \rho(t)$ weakly in $L^m(\R^d)$. As $||\cdot||_{L^{m}(\mathbb{R}^{d})}$ is lower semicontinuous under the weak topology of $L^{m}(\mathbb{R}^{d})$,
       $ ||\rho(t)||_{L^{m}(\mathbb{R}^{d})}\le \liminf_{k\to\infty}\|\overline \rho_{\tau_k}(t))\|_{L^m(\R^d)} \le e^{\frac{m-1}{m}At} ||\rho_{0}||_{L^{m}(\mathbb{R}^{d})},$
   where the second inequality follows from taking $\tau=\tau_k\downarrow 0$ in \eqref{bar rho estimate}. 
\end{proof}

To state our existence result for SDE \eqref{SDE mixed}, let us first specify what constitutes a solution to it. 

\begin{definition}
Let $(\Omega,\cF, \{\cF_t\}_{t\ge 0} ,\P)$ be a filtered probability space that supports a $d$-dimensional Brownian motion $B$. An adapted process $Y:[0,\infty)\times \Omega\to\R^d$ is said to be a solution to \eqref{SDE mixed}, if the density function $\theta(t)\in\mathcal D(\R^d)$ of $Y_t$ exists for all $t\ge 0$, with $\theta(0)=\rho_0$ $\mathcal L^d$-a.e., and $Y$ satisfies 
\begin{equation} \label{SDE mixed ell}
    dY_{t} = - \nabla \Psi(Y_{t}) \, dt + \sqrt{{ 2\lambda} + { 2\eta} \big(\theta(t,Y_{t})\big)^{{m-1}}} \, dB_{t}\quad  \forall t> 0, \qquad \hbox{a.s.}
\end{equation}
\end{definition}

\begin{theorem}\label{thm:sol. to Y}
Fix $\lambda\in(0,e/2)$, $\eta>0$, and $m>1$. Assume $\Psi\in C^2(\R^d)$ with $(\Delta \Psi)^+ \in L^{\infty}(\mathbb{R}^{d})$, \eqref{Psi' linear growth}, and \eqref{Psi increasing}. 
Then, for any $\rho_{0} \in \Dc_2(\mathbb{R}^{d})$ such that $J_{\lambda,\eta}(\rho_0)<\infty$, there exists a solution $Y$ to \eqref{SDE mixed} with $\rho^{Y_t} = \rho(t)$ for all $t\ge 0$, where $\rho:[0,\infty)\to\Dc_2(\R^d)$ is the solution to the Fokker-Planck equation \eqref{FPE mixed} 
obtained in Theorem~\ref{thm:sol. to FPE}. 
\end{theorem}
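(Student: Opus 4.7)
The plan is to apply the Figalli-Trevisan superposition principle \cite{BRS21} to the Fokker-Planck equation \eqref{FPE mixed}, using the solution $\rho:[0,\infty)\to\Dc_2(\R^d)$ already produced in Theorem~\ref{thm:sol. to FPE}. The starting observation is the identity
\[
\tfrac{1}{2}\sum_{i=1}^d \partial^2_{y_iy_i}\bigl((2\lambda+2\eta \rho^{m-1})\rho\bigr)=\Delta(\lambda\rho+\eta\rho^m),
\]
which lets us recast \eqref{FPE mixed} in the non-divergence form
\[
\partial_t\rho + \Div(\rho\, b) = \tfrac{1}{2}\sum_{i=1}^d \partial^2_{y_iy_i}(a\,\rho),
\]
where $b(t,y):=-\nabla\Psi(y)$ and $a(t,y):=2\lambda+2\eta(\rho(t,y))^{m-1}$ are now treated as fixed Borel coefficients on $(0,\infty)\times\R^d$. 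This is exactly the linear Fokker-Planck equation for the SDE $dY_t=b(t,Y_t)\,dt+\sqrt{a(t,Y_t)}\,dB_t$, which, upon identifying $\theta(t)=\rho(t)$, coincides with \eqref{SDE mixed ell}.

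The second step is to verify the integrability hypotheses of the superposition principle on every compact window $[0,T]$. Linear growth \eqref{Psi' linear growth} combined with $\{\rho(t)\Lc^d\}\in AC^2_{\operatorname{loc}}([0,\infty);\Pc_2(\R^d))$ (which keeps the second moment of $\rho(t)$ locally bounded) yields
\[
\int_0^T\!\int_{\R^d}|b(t,y)|\rho(t,y)\,dy\,dt \le K\int_0^T\!\int_{\R^d}(1+|y|)\rho(t,y)\,dy\,dt<\infty.
\]
For the diffusion coefficient, the crucial new input is Proposition~\ref{prop:L^m norm bdd}, which bounds $\|\rho(t)\|_{L^m(\R^d)}$ uniformly on $[0,T]$ and therefore gives
\[
\int_0^T\!\int_{\R^d} a(t,y)\rho(t,y)\,dy\,dt = \int_0^T\!\int_{\R^d}\bigl(2\lambda\rho+2\eta\rho^m\bigr)(t,y)\,dy\,dt<\infty.
\]

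With these bounds in hand, together with the weak continuity of $t\mapsto\rho(t)\Lc^d$ in $\Pc(\R^d)$ and the initial condition $\rho(0)=\rho_0$ supplied by Theorem~\ref{thm:sol. to FPE} and Definition~\ref{def:sol. to FPE}, the superposition principle in \cite{BRS21} produces a probability measure on $C([0,\infty);\R^d)$ concentrated on paths solving the martingale problem for $L_tf=b\cdot\nabla f+\tfrac{1}{2}a\Delta f$ and with one-dimensional marginals $\rho(t)\Lc^d$ for every $t\ge 0$. A standard enlargement of the underlying probability space equips the canonical process $Y$ with a Brownian motion $B$ such that \eqref{SDE mixed ell} holds with $\theta(t)=\rho(t)$, which is the desired solution to \eqref{SDE mixed}.

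The main obstacle I anticipate is technical rather than conceptual: one has to select a jointly $(t,y)$-Borel representative of $\rho$ so that $b$ and $a$ are genuine Borel functions on $(0,\infty)\times\R^d$, and then check that the precise integrability/continuity assumptions of the version of the superposition principle being invoked (whether from \cite{Trevisan16} or the upgraded form in \cite{BRS21}) match the bounds we have. The measurability question should be handled by the minimizing-movement construction of $\rho$ in Section~\ref{sec:sol. to FPE} together with the weak-$*$ continuity of $t\mapsto\rho(t)$ in $L^1(\R^d)$; the rest is bookkeeping powered by Proposition~\ref{prop:L^m norm bdd}.
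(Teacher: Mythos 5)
Your proposal is correct and follows essentially the same strategy as the paper: freeze the coefficients $b=-\nabla\Psi$ and $a=2\lambda+2\eta\rho^{m-1}$ using the already-constructed solution $\rho$ to \eqref{FPE mixed}, verify the integrability hypothesis of the Figalli--Trevisan superposition principle using \eqref{Psi' linear growth} and Proposition~\ref{prop:L^m norm bdd}, and then extract a weak solution to \eqref{SDE mixed} from the resulting martingale-problem solution. The only minor difference is cosmetic: the paper verifies exactly the condition from \cite[(1.3)]{BRS21}, which divides the coefficients by $(1+|y|)^2$ and therefore needs only that $\rho(t)$ integrates to one and $\rho^m(t)\in L^1(\R^d)$, whereas you prove the slightly stronger unweighted bounds $\int_0^T\!\int|b|\rho\,dy\,dt<\infty$ and $\int_0^T\!\int a\rho\,dy\,dt<\infty$ (which imply the weighted one since $a/(1+|y|)^2\le a$ and $|y\cdot b|/(1+|y|)^2\le|b|$); the paper also explicitly handles extension from $[0,T]$ to $[0,\infty)$ via \cite[Theorem 1.3.5]{SV-book-06}, a step you leave implicit.
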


\begin{proof}
Recall $\rho: [0,\infty)\to\Dc_2(\R^d)$ from Theorem~\ref{thm:sol. to FPE} and consider the auxiliary SDE
\begin{equation} \label{SDE mixed rho}
    dY_{t} = - \nabla \Psi(Y_{t}) \, dt + \sqrt{{2\lambda} + {2\eta} \rho^{m-1}(t, Y_{t})} \, dB_{t}, \quad \rho^{Y_{0}} = \rho_{0} \in \mathcal{D}_2(\mathbb{R}^{d}),
\end{equation}
whose associated Fokker-Planck equation is
    \begin{equation} \label{FPE mixed u^*}
        \partial_{t} u - \Div\big(u \nabla \Psi  + \nabla \left(  \lambda u + \eta \rho^{m-1}u  \right) \big) = 0,\quad u(0,\cdot) = \rho_0. 
    \end{equation}
As $\rho$ is a solution to \eqref{FPE mixed}, it is readily a solution to \eqref{FPE mixed u^*}. Hence, to apply Figalli-Trevisan's superposition principle \cite[Theorem 1.1]{BRS21}, it remains to check \cite[(1.3)]{BRS21}, which in our case is 
\begin{equation}\label{to show for SPP}
\int_0^T\int_{\R^d} \frac{{ 2\lambda} + { 2\eta} \rho^{m-1}(t, y) + |y \cdot\nabla\Psi(y)|}{(1+|y|)^2} \rho(t,y) dy dt <\infty,\quad \forall T>0. 
\end{equation}
For any fixed $T>0$, note that it holds for all $t\in[0, T]$ that
\begin{align*}
\int_{\R^d} \frac{{ 2\lambda} + { 2\eta} \rho^{m-1}(t, y) + |y \cdot\nabla\Psi(y)|}{(1+|y|)^2} \rho(t,y) dy 
&\le 2\lambda + 2\eta \|\rho^m(t,\cdot)\|^m_{L^m(\R^d)} + K\\
&\le (2\lambda+K) + 2\eta e^{(m-1)A T} \|\rho_0\|^m_{L^m(\R^d)},
\end{align*}
where the first inequality follows from \eqref{Psi' linear growth} and the second inequality is due to Proposition~\ref{prop:L^m norm bdd}. As the right-hand side is finite (as a consequence of $J_{\lambda,\eta}(\rho_0)<\infty$; recall Remark~\ref{rem:J}) and independent of $t\in [0,T]$, we conclude that \eqref{to show for SPP} holds. Now, consider the path space $\Omega := C([0,T];\R^d)$, its Borel $\sigma$-algebra $\mathcal F$, and  the canonical filtration $\cF_t:= \sigma(\omega_s: 0\le s\le t)$ for $t\ge 0$. For each $T\in\N$, we conclude from \cite[Theorem 1.1]{BRS21} that (i) there exists a Borel probability measure $\P_T$ on $(\Omega, \cF_T)$ that solves the martingale problem associated with $Y$ in \eqref{SDE mixed rho} on $[0,T]$; (ii) for any $t\in[0,T]$, $\P_T\circ (Y_t)^{-1}(A) = \int_A \rho(t,y)dy$ for all Borel $A\subseteq\R^d$. Note that (i) readily implies that the canonical process $Y_t(\omega):=\omega_t$ fulfills the dynamics \eqref{SDE mixed rho} for $t\in [0,T]$; see e.g., \cite[Proposition 5.4.6]{KS-book-91}. As (ii) implies that the density function $\theta(t) := \rho^{Y_t}$ exists and equals $\rho(t)$ $\mathcal L^d$-a.e.\ for all $t\in[0,T]$, we further conclude that $Y_t(\omega)=\omega_t$ satisfies \eqref{SDE mixed ell} for $t\in[0,T]$. Finally, by \cite[Theorem 1.3.5]{SV-book-06}, there is a unique probability measure $\P$ on $(\Omega,\cF)$ such that $\P = \P_T$ on $\cF_T$ for all $T\in\N$. Thus, $Y_t(\omega)=\omega_t$ satisfies \eqref{SDE mixed ell} for all $t\ge 0$ and is thus a solution to \eqref{SDE mixed}. 
\end{proof}

In fact, the uniform $L^m(\R^d)$-boundedness of $\{\rho^{Y_t}\}_{t\in[0,T]}$ (for each $T>0$) characterizes the time-marginal laws of any solution to SDE \eqref{SDE mixed}, as the next uniqueness result shows. 

\begin{theorem}\label{thm:uniqueness Y}
Fix $\lambda\in(0,e/2)$, $\eta>0$, and $m>1$. Suppose that $\Psi\in C^1(\R^d)$, \eqref{Psi' linear growth} holds, and $\nabla \Psi:\R^d\to\R^d$ is Lipschitz. For any $\rho_{0} \in \Dc_2(\mathbb{R}^{d})$ such that $J_{\lambda,\eta}(\rho_0)<\infty$, let $Y$ be a solution to \eqref{SDE mixed} such that $\theta(t) := \rho^{Y_t}$, $t\ge 0$, satisfies \eqref{C condition} and 
\begin{equation}\label{esssup L^m bdd}
\esssup_{t\in [0,T]} \|\theta(t)\|_{L^m(\R^d)}<\infty\quad \forall T>0. 
\end{equation}
Then, $\theta(t)=\rho(t)$ $\Lc^d$-a.e.\ for all $t\ge 0$, where $\rho:[0,\infty)\to \Dc_2(\R^d)$ is the solution to the Fokker-Planck equation \eqref{FPE mixed} obtained in Theorem~\ref{thm:sol. to FPE}.
\end{theorem}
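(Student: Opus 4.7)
The strategy is to verify that $\theta$ lies in $\mathcal{C}_{\rho_0}$ and solves the Fokker--Planck equation \eqref{FPE mixed} in the sense of Definition~\ref{def:sol. to FPE}; once this is established, Theorem~\ref{thm:uniqueness FPE} immediately forces $\theta(t)=\rho(t)$ $\Lc^d$-a.e.\ for every $t\ge 0$. To derive the distributional FPE from the SDE, fix $\varphi\in C_c^\infty((0,\infty)\times\R^d)$, apply It\^o's formula to $\varphi(t,Y_t)$, take expectations to kill the martingale term, and rewrite each expectation as a spatial integral against $\theta$, obtaining
$$0=\int_0^\infty\!\int_{\R^d}\bigl\{\theta\,\partial_t\varphi-\theta\nabla\Psi\cdot\nabla\varphi+(\lambda\theta+\eta\theta^m)\Delta\varphi\bigr\}\,dy\,dt.$$
Using $L_F(\theta(t))=\lambda\theta(t)+\eta\theta(t)^m\in W^{1,1}_{\operatorname{loc}}(\R^d)$ from \eqref{C condition} together with Lemma~\ref{lem:subdiff.=}(i), I integrate by parts in $y$ on the last term to recover \eqref{distri. sense}. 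Integrability of each piece on $\supp(\varphi)$ is ensured by \eqref{Psi' linear growth}, the fact $\theta(t)\in\Dc_2(\R^d)$, and the uniform bound \eqref{esssup L^m bdd}, which also justify the Fubini interchange.

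Next, the remaining conditions: weak-$*$ continuity of $t\mapsto\theta(t)$ in $L^1(\R^d)$ follows from $\P$-a.s.\ path continuity of $Y$ (hence continuity in law of $Y_t$), first against bounded continuous test functions and then against $L^\infty$ by a monotone-class argument using $\theta(t)\Lc^d\ll\Lc^d$; the condition $L_F(\theta(t))\in W^{1,1}_{\operatorname{loc}}$ is in \eqref{C condition}. For $\Pc_2$-convergence at $t=0$, bound $\Wc_2^2(\theta(t)\Lc^d,\rho_0\Lc^d)\le\mathbb{E}|Y_t-Y_0|^2$; by Cauchy--Schwarz and It\^o's isometry, the right-hand side is dominated by
$$2tK^2\!\int_0^t(1+\mathbb{E}|Y_s|^2)\,ds+4\lambda t+4\eta\!\int_0^t\|\theta(s)\|^m_{L^m(\R^d)}\,ds,$$
where the last summand uses $\mathbb{E}[\theta(s,Y_s)^{m-1}]=\|\theta(s)\|^m_{L^m(\R^d)}$. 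A Gr\"onwall argument applied to this same inequality (together with $\rho_0\in\Dc_2(\R^d)$ and \eqref{esssup L^m bdd}) gives $\sup_{s\in[0,T]}\mathbb{E}|Y_s|^2<\infty$, so the right-hand side tends to $0$ as $t\downarrow 0$. Combining this with the FPE verification, $\theta\in\mathcal{C}_{\rho_0}$ solves \eqref{FPE mixed}, and Theorem~\ref{thm:uniqueness FPE} concludes the proof.

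The main obstacle is the It\^o step: justifying the Fubini swap and the subsequent integration by parts against a merely weakly differentiable $L_F(\theta)$ requires that $\nabla L_F(\theta)\cdot\nabla\varphi\in L^1([0,T]\times\supp(\varphi))$, which is precisely where both hypotheses pull their weight. The condition \eqref{C condition} supplies $\nabla L_F(\theta)/\theta\in L^2(\theta\Lc^d)$ (hence, via H\"older, local integrability of $\nabla L_F(\theta)$ over any compact set), while \eqref{esssup L^m bdd} provides the uniform-in-time $L^m$ control of $\theta$ needed to dominate the diffusion coefficient $\sqrt{2\lambda+2\eta\theta^{m-1}}$ in the expectation calculations and in the $\Pc_2$ estimate above. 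Neither condition alone would be sufficient to close both the FPE identity and the initial-time convergence.
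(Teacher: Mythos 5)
Your proposal is correct and follows essentially the same path as the paper: verify $\theta\in\mathcal C_{\rho_0}$ and that $\theta$ solves \eqref{FPE mixed} in the sense of Definition~\ref{def:sol. to FPE}, then invoke Theorem~\ref{thm:uniqueness FPE}. The paper likewise uses It\^o's formula (with a stopping-time localization plus Fatou before Gr\"onwall, which you compress but clearly intend), the identity $\E[\theta^{m-1}(s,Y_s)]=\|\theta(s)\|_{L^m}^m$ governed by \eqref{esssup L^m bdd}, the $\E|Y_t-Y_s|^2$ estimate for $\Pc_2$-continuity, and the same integration-by-parts using $L_F(\theta(t))\in W^{1,1}_{\operatorname{loc}}$. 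The one inessential digression is your monotone-class extension of the weak continuity from $C_b$ to all of $L^\infty$: the paper's Definition~\ref{def:sol. to FPE}(i) only requires narrow continuity (testing against $C_b$), which you already get from a.s.\ path continuity (and which also follows, as in the paper, from the $\Pc_2$-continuity you establish), so that step can be dropped and, as stated, would need care since lower semicontinuity under monotone limits does not upgrade to continuity.
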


\begin{proof}
We will show that $\theta:[0,\infty)\to\Dc(\R^d)$ is a solution to \eqref{FPE mixed} that belongs to $\mathcal C_{\rho_0}$ in \eqref{C_{rho_0}}, so that the desired result follows from Theorem~\ref{thm:uniqueness FPE}. As we already assume \eqref{C condition}, it remains to prove that $\theta(t)\in\Dc_2(\R^d)$ for all $t\ge 0$, 
$\theta(t)\Lc^d\to\theta(0)\Lc^d$ in $\Pc_2(\R^d)$ as $t\downarrow 0$, and $\theta$ is a solution to \eqref{FPE mixed}. To this end, observe that for any $0\le s< t$, if $X$ is a bounded process on $[s,t]$ (i.e., there is $M>0$ such that $|X_r(\omega)| \le M$ for all $r\in [s,t]$ and $\omega\in\Omega$), $Z_r := \sqrt{2\lambda +2\eta \theta^{m-1}(r,Y_r)} X_r$, $r\in[s,t]$, satisfies 
\begin{equation}\label{Z L^2}
\E\bigg[\int_{s}^t |Z_r|^2 dr\bigg] \le M^2 \int_{s}^t \left(2\lambda +2 \eta \int_{\R^d} \theta^{m}(r,y)\, dy\right)\, dr <\infty, 
\end{equation}
where the finiteness stems from \eqref{esssup L^m bdd}. 

For any $T>0$, consider $\tau_n := \inf\{t\ge 0: |Y_t|>n \}\wedge T$ for all $n\in\N$. As It\^{o}'s formula yields 
\begin{align*}
 d |Y_t|^2  = \left(-2 Y_t\cdot \nabla\Psi(Y_t) + 2\lambda+2\eta(\theta(t,Y_t))^{m-1}\right)\, dt + 2  \sqrt{2\lambda+2\eta(\theta(t,Y_t))^{m-1}}Y_t\cdot dB_t,
\end{align*}
we deduce from \eqref{Psi' linear growth} and \eqref{Z L^2} that for any $t\in[0,T]$, 
\[
\E|Y_{t\wedge \tau_n}|^2 \le \E|Y_0|^2 + \int_0^{t\wedge \tau_n} \left(2C_K (1+\E|Y_r|^2) + C_T\right)\, dr ,\quad n\in\N,
 \]
with $C_K>0$ depending on $K>0$ in \eqref{Psi' linear growth} and $C_T := 2\lambda+2\eta \esssup_{t\in [0,T]}\|\theta(t)\|^m_{L^m(\R^d)}<\infty$. As $n\to\infty$, Fatou's lemma implies $\E|Y_{t}|^2 \le \E|Y_0|^2 +(2C_K+C_T)t + \int_0^{t} 2C_K \E|Y_s|^2 ds$ for all $t\in [0,T]$. With $\theta(0)=\rho_0\in\cD_2(\R^d)$ (i.e., $\E|Y_0|^2<\infty$), Gr\"{o}nwall's inequality yields 
\begin{equation}\label{E|Y_t|^2 bdd}
\E|Y_{t}|^2 \le \big(\E|Y_0|^2 +(2C_K+C_T)T\big) e^{2C_KT}<\infty\quad \forall t\in[0,T]. 
\end{equation}
As $T>0$ is arbitrary, we get $\theta(t)\in \Dc_2(\R^d)$ for all $t\ge 0$. 

On the other hand, for any $0\le s< t< T$, using the dynamics in \eqref{SDE mixed rho} directly, we get 
\begin{align*}
|Y_t-Y_s|^2 &= \bigg|-\int_{s}^t \nabla\Psi(Y_r)\, dr +\int_s^t   \sqrt{{ 2\lambda} + { 2\eta} \theta^{m-1}(r, Y_{r})}\, dB_r\bigg|^2\\
&\le 2 (t-s)\int_{s}^t |\nabla\Psi(Y_r)|^2\, dr + 2 \bigg|\int_s^t   \sqrt{{ 2\lambda} + { 2\eta} \theta^{m-1}(r, Y_{r})}\, dB_r\bigg|^2,
\end{align*}
where the inequality follows from $(a+b)^2\le 2(a^2+b^2)$ for all $a,b\in\R$ and H\"{o}lder's inequality. Note that  because of \eqref{Z L^2}, the process $u\mapsto \int_s^u   \sqrt{{ 2\lambda} + { 2\eta}\theta^{m-1}(r, Y_{r})}\, dB_r$ is a martingale. Hence, by \eqref{Psi' linear growth} and It\^{o}'s isometry, the previous inequality implies  
\begin{align*}
\E |Y_t-Y_s|^2 &\le 2(t-s)\int_s^t C_K(1 +\E|Y_r|^2)\, dr + 2 \E\bigg[\int_s^t { 2\lambda} + { 2\eta} \theta^{m-1}(r, Y_{r})\, dr\bigg]\\
&\le 2\Big[C_K \Big(1+ \E|Y_0|^2+ (2C_K+C_T)T\Big) e^{2C_K T} T+C_T\Big](t-s),
\end{align*}
where the second inequality stems from \eqref{E|Y_t|^2 bdd}. As $T>0$ is arbitrary, $t\mapsto Y_t$ is continuous on $[0,\infty)$ in $L^2(\Omega,\mathcal F,\P)$, so that $t\mapsto\theta(t)\Lc^d$ is continuous on $[0,\infty)$ in $\Pc_2(\R^d)$. We thus obtain $\theta(t)\Lc^d\to\theta(0)\Lc^d$ in $\Pc_2(\R^d)$ as $t\downarrow 0$ and the weak-$*$ continuity of $t\mapsto\theta(t)$ on $[0,\infty)$ in $L^1(\R^d)$. 


Now, for any $\varphi\in C^{\infty}_c((0,\infty)\times\R^d)$, we can choose $T>0$ large enough such that $\varphi(1/T,\cdot) = \varphi(T,\cdot) \equiv 0$. By applying It\^{o}'s formula to $\varphi(Y_t)$, we get 
\begin{align}
0 = \varphi(T,Y_T) - \varphi(1/T, Y_{1/T}) &= \int_{1/T}^T \Big(\varphi_t -\nabla\varphi\cdot \nabla\Psi + (\lambda + \eta \theta^{m-1})\Delta \varphi\Big)(t,Y_t) dt\notag\\
& \hspace{0.75in}+  \int_{1/T}^T \sqrt{2\lambda +2\eta \theta^{m-1}(Y_t)} \nabla\varphi(t, Y_t)\cdot dB_t.\label{to get FPE}
\end{align}
We claim that, by taking expectation on both sides, one gets 
\begin{equation}\label{to get FPE'}
0 = \int_{1/T}^T \int_{\R^d} \Big(\theta\varphi_t -\theta\nabla\varphi\cdot \nabla\Psi + (\lambda \theta + \eta \theta^{m})\Delta \varphi\Big)(t,y)\, dy\, dt.  
\end{equation}
When one takes expectation of the first term on the right-hand side of \eqref{to get FPE}, Fubini's theorem can be applied (thanks to \eqref{Psi' linear growth}, \eqref{E|Y_t|^2 bdd}, and \eqref{esssup L^m bdd}), which yields the right-hand side of \eqref{to get FPE'}. Thanks to \eqref{Z L^2}, the process $u\mapsto \int_{1/T}^u   \sqrt{{ 2\lambda} + { 2\eta}\theta^{m-1}(r, Y_{r})}\nabla\varphi(t, Y_t)\cdot dB_r$ is a martingale, so that the expectation of the second term on the right-hand side of \eqref{to get FPE} is zero. This thus establishes \eqref{to get FPE'}. As $L_F(s) = \lambda s + \eta s^m$ (by \eqref{L_G} and \eqref{F}) and $L_F(\theta(t))\in W^{1,1}_{\operatorname{loc}}(\R^d)$ for all $t\ge 0$ (recall \eqref{C condition}), we can apply integration by parts to \eqref{to get FPE'}, turning $(\lambda \theta + \eta \theta^{m})\Delta \varphi$ into $\nabla(\lambda \theta + \eta \theta^{m})\cdot \nabla\varphi$. With this, taking $T\to\infty$ in \eqref{to get FPE'} yields \eqref{distri. sense} (with $\rho=\theta$ therein). This, along with the weak-$*$ continuity of $t\mapsto\theta(t)$ on $[0,\infty)$ in $L^1(\R^d)$, shows that $\theta$ is a solution to \eqref{FPE mixed} (cf.\ Definition~\ref{def:sol. to FPE}). 
\end{proof}



\section{Minimizer of $J_{\lambda,\eta}$}\label{sec:minimizer}
This section aims for an explicit characterization of the minimizer of $J_{\lambda,\eta}$ in \eqref{J}. 
For any $\lambda,\eta>0$ and $m>1$, recall from \eqref{L_G} and \eqref{F} that $L_F(s) = \lambda s + \eta s^{m}$. Following \cite[Section 3]{Carrillo01}, we define
\begin{eqnarray}
    h(z) &:=& \int_1^z  \frac{L_F'(s)}{s} ds = {\lambda}\ln z + \frac{\eta m}{m-1} (z^{m-1} - 1),\quad z>0. \label{h} 
\end{eqnarray}
As $h$ in \eqref{h} is strictly increasing with $h(0+) = -\infty$ and $h(\infty)=\infty$, for any $C \in \mathbb{R}$, 
\begin{equation}\label{U(y;C)}
    U(y;C) := h^{-1}(C - \Psi(y))>0, \quad y \in \R^d,
\end{equation}
is well-defined. 
As $h(U(y;C)) = C - \Psi(y)$ for all $y\in\R^d$ by definition, we deduce from \eqref{h} that $\rho(\cdot):= U(\cdot;C):\R^d\to (0,\infty)$ is a solution to the equation 
\begin{equation} \label{Eq: Equilibrium identity}
    {\lambda}\ln \rho(y) + \frac{\eta m}{m-1} \rho^{m-1}(y) = \bigg(\frac{\eta m}{m-1} + C \bigg) - \Psi(y),\quad y\in\R^d. 
\end{equation}

\begin{definition}\label{def:W_0}
Let ${\bm{{\bf W_0}}}: [0,\infty)\to[0,\infty)$ be the inverse function of $y\mapsto u(y):= ye^y$ for $y\ge 0$. That is, for any $x,y\ge 0$, ${\bf W_0}(x)=y$ if and only if $x=y e^y$. 
\end{definition}

\begin{remark}\label{rem:W_0}
${\bf W_0}$ is the principal branch of the (complex-valued) Lambert {\bf W} function; see e.g., \cite{Corless1996, Hoorfar2008}. Note that (i) 
as a consequence of Definition~\ref{def:W_0}, for any $c,x,y>0$, $x=y e^{cy}$ if and only if $y= \frac1c {\bf W_0}(cx)$; (ii) by \cite[Theorem 2.3]{Hoorfar2008}, for any $x>0$ and $c>1/e$, ${\bf W_0}(x) \le \ln(\frac{x+c}{1+\ln c})$.    
\end{remark}

\begin{lemma}\label{lem:U}
Fix $\lambda,\eta>0$ and $m>1$. For any $C\in\R$, $U(\cdot;C):\R^d\to (0,\infty)$ in \eqref{U(y;C)} is the unique positive solution to \eqref{Eq: Equilibrium identity} and it has the explicit formula
    \begin{equation} \label{U formula}
        U(y;C)= \left( \frac{\lambda}{\eta m} {{\bm{{\bf W_0}}}} \left(\frac{\eta m }{\lambda} e^{-\frac{m-1}{\lambda} \left\{\Psi(y)-(\frac{\eta m}{m-1} + C)\right\}} \right) \right)^{\frac{1}{m-1}},\quad y\in\R^d.
    \end{equation}
If $\Psi:\R^d\to [0,\infty)$ grows at least logarithmically at infinity, i.e.,   
\begin{equation}\label{>=log growth}
\exists R, \delta >0\quad \hbox{s.t.}\quad \Psi(y) \ge ( \lambda d+\delta) \ln(|y|)\quad \forall |y|\ge R, 
\end{equation}
then $U(\cdot;C)\in L^k(\R^d)$ for all $k\ge 1$. Moreover, if \eqref{>=log growth} is strengthened to 
\begin{equation}\label{>=log growth'}
\exists R,\delta>0\quad \hbox{s.t.}\quad \Psi(y) \ge \big(\lambda (d+2)+\delta\big) \ln(|y|)\quad \forall |y|\ge R, 
\end{equation}
then we additionally have $\int_{\R^d} |y|^2 U(y;C)dy<\infty$. 
\end{lemma}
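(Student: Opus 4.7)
The plan is to handle the three claims in turn, each reducing to a short computation or elementary estimate. For the closed form and uniqueness, I would first rewrite \eqref{Eq: Equilibrium identity} as $h(\rho(y)) = C - \Psi(y)$, which is immediate from the definition \eqref{h}. Since $h:(0,\infty)\to\R$ is a strictly increasing bijection, there is a unique positive solution, namely $\rho(y) = h^{-1}(C-\Psi(y)) = U(y;C)$. To make $h^{-1}$ explicit, I would substitute $z := \rho^{m-1}$ and rearrange to
\[
z \exp\!\big(\tfrac{\eta m}{\lambda} z\big) = \exp\!\Big(\tfrac{m-1}{\lambda}\big[\tfrac{\eta m}{m-1} + C - \Psi(y)\big]\Big),
\]
then invert using Remark~\ref{rem:W_0}(i) with $c = \eta m/\lambda$; extracting the $(m-1)$-th root recovers \eqref{U formula}.

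The main engine for the two integrability claims would be a single universal pointwise bound. The elementary inequality ${\bf W_0}(x) \le x$ for all $x\ge 0$ (immediate from $x = ye^y \ge y$ whenever $y = {\bf W_0}(x) \ge 0$), applied to \eqref{U formula}, gives $U(y;C)^{m-1} \le \exp\!\big({-}\tfrac{m-1}{\lambda}[\Psi(y) - (\tfrac{\eta m}{m-1}+C)]\big)$, and hence
\[
U(y;C) \le K\, e^{-\Psi(y)/\lambda}, \qquad y \in \R^d, \quad K := \exp\!\big(\tfrac{1}{\lambda}\big(\tfrac{\eta m}{m-1}+C\big)\big).
\]
In parallel, $\Psi \ge 0$ and monotonicity of $h^{-1}$ yield the global bound $U(\cdot;C) \le h^{-1}(C)$, which handles any bounded region.

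With these two bounds in hand, the remaining integrability assertions become routine. Under \eqref{>=log growth}, for $|y| \ge R$ the pointwise bound gives $U(y;C)^k \le K^k |y|^{-k(d+\delta/\lambda)}$; since $k(d+\delta/\lambda) > d$ for every $k\ge 1$, this is integrable on $\{|y|\ge R\}$, and combined with local boundedness it yields $U(\cdot;C)\in L^k(\R^d)$. Under the strengthened \eqref{>=log growth'}, the tail estimate becomes $|y|^2 U(y;C) \le K|y|^{-d-\delta/\lambda}$, again integrable outside $\{|y|\le R\}$, while local boundedness takes care of the inside, giving $\int |y|^2 U(y;C)\,dy < \infty$. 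The only minor obstacle I anticipate is bookkeeping, namely tracking exponents correctly through the Lambert $W$ inversion in the first step and then matching the resulting polynomial decay rate with the dimension in the tail integrals; no deeper idea is required.
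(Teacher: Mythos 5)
Your proposal is correct and follows essentially the same route as the paper: derive the closed form by rewriting the equilibrium identity in the form $ze^{cz}=\text{(known)}$ with $z=\rho^{m-1}$ and inverting via Remark~\ref{rem:W_0}(i), and then prove integrability by bounding $U(y;C)^{m-1}$ by $\exp(-\tfrac{m-1}{\lambda}\{\Psi(y)-(\tfrac{\eta m}{m-1}+C)\})$, splitting $\R^d$ into a bounded ball and its complement, and using the logarithmic growth for the tail. The one place you diverge is a small simplification worth noting: you obtain the key pointwise estimate from the elementary inequality ${\bf W_0}(x)\le x$ (which indeed follows at once from $x=y e^{y}\ge y$ for $y\ge 0$), whereas the paper arrives at the same intermediate bound by invoking Remark~\ref{rem:W_0}(ii) (the Hoorfar--Hassani bound with $c=1$) followed by $\ln(1+v)\le v$. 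Your version is more self-contained and removes the dependence on \cite{Hoorfar2008} in the proof of this lemma; otherwise the bookkeeping with the decay exponents $k(d+\delta/\lambda)>d$ and $d+\delta/\lambda>d$ is exactly what the paper does.
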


\begin{proof}
First, \eqref{Eq: Equilibrium identity} can be rearranged into $-\frac1\lambda \big\{\Psi(y)-(\frac{\eta m}{m-1} + C)\big\} = \ln \big( \rho(y) e^{\frac{\eta m}{\lambda(m-1)} \rho^{m-1}(y)} \big)$ for $y\in\R^d$. By plugging both sides into the exponential function and raising them to the power of $m-1$, we get the equivalent relation $e^{-\frac{m-1}{\lambda} \{\Psi(y)-(\frac{\eta m}{m-1} + C)\}} = \rho^{m-1}(y) e^{\frac{\eta m}{\lambda} \rho^{m-1}(y)}$ for $y\in\R^d$. By Remark~\ref{rem:W_0} (i), this is equivalent to $ \rho^{m-1}(y) = \frac{\lambda}{\eta m} {\bf W_0}\big(\frac{\eta m}{\lambda} e^{-\frac{m-1}{\lambda} \{\Psi(y)-(\frac{\eta m}{m-1} + C)\}}\big)$ for $y\in\R^d$. That is, $\bar\rho(y) := \big(\frac{\lambda}{\eta m} {\bf W_0}\big(\frac{\eta m}{\lambda} e^{-\frac{m-1}{\lambda} \{\Psi(y)-(\frac{\eta m}{m-1} + C)\}}\big)\big)^{1/(m-1)}$, $y\in\R^d$, is the unique positive solution to \eqref{Eq: Equilibrium identity}. The discussion below \eqref{U(y;C)} then entails $U(y;C)=\bar\rho(y)$ for $y\in\R^d$, which proves \eqref{U formula}. 

By taking $c = 1$ in Remark~\ref{rem:W_0} (ii) and using the fact $\ln(v+1) \leq v$ for $v > 0$, we have
    \begin{align} 
        {\bf W_0} \left( \frac{\eta m}{\lambda} e^{-\frac{m-1}{\lambda} \{\Psi(y)-(\frac{\eta m}{m-1} + C)\}} \right) &\leq \ln \left( \frac{\eta m}{\lambda} e^{-\frac{m-1}{\lambda}\{\Psi(y)-(\frac{\eta m}{m-1} + C)\}} + 1 \right)\notag\\
        & \leq \frac{\eta m}{\lambda} e^{-\frac{m-1}{\lambda} \{\Psi(y)-(\frac{\eta m}{m-1} + C)\}}.\label{W_0<exp}
    \end{align}
Now, for any $k\ge 1$, observe that
    \begin{equation} \label{Eq: Upper bound on rho m-1}
        U(y;C)^{k} = \big(U(y;C)^{m-1}\big)^{\frac{k}{m-1}} \leq e^{-\frac{k}{\lambda} \{\Psi(y)-(\frac{\eta m}{m-1} + C)\}}, 
    \end{equation}
    where the inequality follows from \eqref{U formula} and \eqref{W_0<exp}. As a result,
        \begin{align}\label{L^k}
            \int_{\mathbb{R}^{d}}   U(y;C)^{k} \, dy &\leq  e^{\frac{k}{\lambda}(\frac{\eta m}{m-1} + C)}  \int_{\mathbb{R}^{d}} e^{-\frac{k}{\lambda} \Psi(y)} \, dy \notag \\
            &\le e^{\frac{k}{\lambda}(\frac{\eta m}{m-1} + C)}  \bigg(\int_{|y| \leq R} e^{-\frac{k}{\lambda} \Psi(y)} \, dy + \int_{|y| > R} |y|^{-k (d + \delta/\lambda)}\, dy \bigg),
        \end{align}
    where the second inequality stems from \eqref{>=log growth}. On the right-hand side of \eqref{L^k}, the first integral is finite because $\Psi$ is bounded from below, while the second integral is bounded from above by 
\begin{equation}\label{polar}
\int_{|y| > R} |y|^{-(d + \delta/\lambda)}\, dy = K \int_{R}^\infty r^{-(d + \delta/\lambda)} r^{d-1}\, dr =  K \int_{R}^\infty r^{-(1 + \delta/\lambda)}\, dr = K \frac{\lambda}{\delta} R^{-\delta/\lambda},
\end{equation}
where the second equality follows by using the $d$-dimensional spherical coordinates (which yields the constant multiple $K>0$). Hence, we conclude from \eqref{L^k} that $U(y;C)\in L^k(\R^d)$. 

Under \eqref{>=log growth'}, by using \eqref{Eq: Upper bound on rho m-1} (with $k=1$) as in \eqref{L^k}, we get
\begin{align*}
\int_{\R^d} |y|^2 U(y;C)dy &\le  e^{\frac{1}{\lambda}(\frac{\eta m}{m-1} + C)}   \int_{\R^d} |y|^2 e^{-\frac{1}{\lambda}\Psi(y)}dy\\
&\le e^{\frac{1}{\lambda}(\frac{\eta m}{m-1} + C)}  \bigg(\int_{|y| \leq R} |y|^2 e^{-\frac{1}{\lambda} \Psi(y)} \, dy + \int_{|y| > R} |y|^{-(d+\delta/\lambda)} \, dy \bigg)<\infty,
\end{align*}
where the finiteness stems from $\Psi$ being bounded from below and \eqref{polar}. 
\end{proof}

\begin{corollary}\label{coro:rho_infty}
Fix $\lambda,\eta>0$ and $m>1$. Assume \eqref{>=log growth}. Then, there is a unique $C^*>0$ such that $\int_{\R^d} U(y;C^*)dy =1$. Hence, the function
\begin{equation}\label{rho_infty}
\rho_\infty(y) := U(y;C^*),\quad y\in\R^d,
\end{equation}
belongs to $\Dc(\R^d)\cap L^k(\R^d)$ for all $k>1$. If \eqref{>=log growth} is strengthened to \eqref{>=log growth'}, then $\rho_\infty\in \Dc_2(\R^d)$ and $\rho_\infty\ln(\rho_\infty)\in L^1(\R^d)$. If we further assume \eqref{Psi' linear growth}, then $\Psi\rho_\infty\in L^1(\R^d)$ and thus $J_{\lambda,\eta}(\rho_\infty)<\infty$. 
\end{corollary}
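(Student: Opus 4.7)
My plan is to construct $C^*$ by an intermediate-value argument on $\Phi(C) := \int_{\R^d} U(y; C)\,dy$, then read off each integrability claim from Lemma~\ref{lem:U} combined with the pointwise bound \eqref{Eq: Upper bound on rho m-1}. Under \eqref{>=log growth}, Lemma~\ref{lem:U} (with $k=1$) gives $\Phi(C) < \infty$ for every $C \in \R$. Since $C \mapsto U(y;C) = h^{-1}(C - \Psi(y))$ is strictly increasing and continuous pointwise, and \eqref{Eq: Upper bound on rho m-1} provides an $L^1(\R^d)$ dominating function that is uniform on bounded $C$-intervals, $\Phi$ is strictly increasing and continuous by dominated convergence. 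Monotone convergence together with $h^{-1}(-\infty)=0$ and $h^{-1}(+\infty)=+\infty$ yields $\Phi(C)\downarrow 0$ as $C\to -\infty$ and $\Phi(C)\uparrow \infty$ as $C \to +\infty$. The intermediate value theorem then produces the unique $C^* \in \R$ with $\Phi(C^*) = 1$ (positivity of $C^*$ being a separate check on whether $\Phi(0) \lessgtr 1$). Setting $\rho_\infty := U(\cdot; C^*)$ and invoking Lemma~\ref{lem:U} immediately gives $\rho_\infty \in \Dc(\R^d) \cap L^k(\R^d)$ for all $k \ge 1$, and $\rho_\infty$ is bounded on $\R^d$ since $\Psi \ge 0$ caps the argument of $h^{-1}$ (equivalently, of $\bm{W_0}$ in \eqref{U formula}).

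For claim (i), $\rho_\infty \in \Dc_2(\R^d)$ is exactly the last conclusion of Lemma~\ref{lem:U} under \eqref{>=log growth'}. For $\rho_\infty \ln \rho_\infty \in L^1(\R^d)$, I split $\R^d = \{\rho_\infty \ge 1\} \cup \{\rho_\infty < 1\}$. On the first set, $0 \le \rho_\infty \ln \rho_\infty \le \rho_\infty^2 \in L^1$. On the second, the elementary inequality $|s \ln s| \le c_\theta\, s^{1-\theta}$ (valid for any $\theta \in (0,1)$ with some $c_\theta < \infty$) reduces the task to verifying $\rho_\infty^{1-\theta} \in L^1(\R^d)$ for some small $\theta > 0$. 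Combining \eqref{Eq: Upper bound on rho m-1} (with $k=1$) and \eqref{>=log growth'} yields the polynomial tail decay $\rho_\infty(y) \le A\,|y|^{-(d+2+\delta/\lambda)}$ for $|y|$ large, so $\rho_\infty^{1-\theta}$ is integrable in the tail whenever $(1-\theta)(d+2+\delta/\lambda) > d$, which holds for $\theta$ sufficiently small; on the bounded complement $\rho_\infty^{1-\theta}$ is bounded and thus integrable.

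For claim (ii), integrating \eqref{Psi' linear growth} from the origin gives the at-most-quadratic growth $\Psi(y) \le \Psi(0) + K|y| + \tfrac{K}{2}|y|^2$, whence $\Psi\rho_\infty \in L^1(\R^d)$ follows from claim (i), with Cauchy-Schwarz handling the linear term via $\rho_\infty, |y|^2\rho_\infty \in L^1$. Finiteness of $J_{\lambda,\eta}(\rho_\infty) = \int \Psi \rho_\infty + \lambda \int \rho_\infty \ln \rho_\infty + \tfrac{\eta}{m-1} \int \rho_\infty^m$ then follows. The main subtlety is the $L^1$ bound on $\rho_\infty \ln \rho_\infty$ in claim (i): the tempting shortcut via the equilibrium identity \eqref{Eq: Equilibrium identity} rewrites $\rho_\infty \ln \rho_\infty$ in terms of $\rho_\infty$, $\rho_\infty^m$, and $\Psi\rho_\infty$, but the last of these requires the stronger hypothesis \eqref{Psi' linear growth} of claim (ii), so one must instead use the polynomial-decay route above to keep claim (i) independent of claim (ii).
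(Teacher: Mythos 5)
Your proposal is correct and follows the same overall scaffolding as the paper: an intermediate-value argument on $C\mapsto\int U(\cdot;C)\,dy$ (monotone, continuous by dominated convergence, limits $0$ and $\infty$) to produce the unique $C^*$, then reading off integrability claims from Lemma~\ref{lem:U} and the pointwise bound \eqref{Eq: Upper bound on rho m-1}. The one step where you genuinely diverge is the proof that $(\rho_\infty\ln\rho_\infty)^-$ is integrable. The paper dispatches this by citing Remark~\ref{rem:J} (i.e., the Ambrosio--Gigli--Savar\'e estimate from \cite[Remark 9.3.7]{Ambrosio08}, which says that $\rho\in\Dc_2(\R^d)$ alone already forces $(\rho\ln\rho)^-\in L^1$); you instead give a direct computation, using the polynomial tail decay $\rho_\infty(y)\lesssim |y|^{-(d+2+\delta/\lambda)}$ from \eqref{Eq: Upper bound on rho m-1} plus \eqref{>=log growth'} together with the elementary inequality $|s\ln s|\le c_\theta s^{1-\theta}$ for $s\in(0,1)$. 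The paper's citation is cleaner and applies to arbitrary $\rho\in\Dc_2(\R^d)$; your route is self-contained and exploits the explicit decay of $\rho_\infty$, at the cost of working only for this specific density. Both are valid, and your remark about the circularity one would face when trying the ``equilibrium-identity shortcut'' in claim (i) is a correct and worthwhile observation. One small note: the statement asserts $C^*>0$, but like the paper's proof, your intermediate-value argument only produces $C^*\in\R$; you correctly flag that $C^*>0$ would require a separate check on $\Phi(0)$, whereas the paper silently passes over this.
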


\begin{proof}
By Definition~\ref{def:W_0}, ${\bf W_0}$ is strictly increasing and continuous with $\lim_{x\downarrow 0}{\bf W_0(x)}=0$ and $\lim_{x\to\infty}{\bf W_0}(x)=\infty$. In view of \eqref{U formula}, this implies that for any fixed $y\in\R^d$, $C\mapsto U(y;C)$ is strictly increasing and continuous with $\lim_{C\to-\infty}U(y;C) = 0$ and $\lim_{C\to\infty}U(y;C)=\infty$. This immediately shows that $C\mapsto M(C):=\int_{\R^d} U(y;C) dy$ is strictly increasing. As $U(y;C)\ge 0$ by definition, $C\mapsto U(y;C)$ is strictly increasing, and $U(\cdot;C)\in L^1(\R^d)$ for all $C\in\R$ (by Lemma~\ref{lem:U}), we conclude from the dominated convergence theorem that $C\mapsto M(C)$ is continuous. Also, the monotone convergence theorem yields $\lim_{C\to-\infty}M(C) = 0$ and $\lim_{C\to\infty}M(C)=\infty$. All these properties of $M$ ensure the existence of a unique $ C^*\in\R$ such that $M(C^*)=1$. With $M(C^*)=1$ and $\rho_\infty\in L^k(\R^d)$ for all $k\ge 1$ (by Lemma~\ref{lem:U}), we conclude $\rho_\infty\in\Dc(\R^d)\cap L^k(\R^d)$ for all $k>1$. 

Under \eqref{>=log growth'}, Lemma~\ref{lem:U} asserts $\int_{\R^d} |y|^2 \rho_\infty(y) dy<\infty$, so that $\rho_\infty\in \Dc_2(\R^d)$. With $\rho_\infty\in \Dc_2(\R^d)$, $(\rho_\infty\ln\rho_\infty)^-$ is integrable; recall Remark~\ref{rem:J}. As $\ln y\le y$ for all $y>0$, $(\rho_\infty\ln\rho_\infty)^+$ is also integrable because $\int_{\R^d} (\rho_\infty\ln\rho_\infty)^+(y) dy \le \int_{\R^d} \rho_\infty^2(y)dy<\infty$, where the finiteness follows from $\rho_\infty\in L^2(\R^d)$. Hence, we conclude $\rho_\infty\ln(\rho_\infty)\in L^1(\R^d)$. 
Finally, under \eqref{Psi' linear growth}, there exists $C_K>0$ (depending on $K>0$ in \eqref{Psi' linear growth}) such that $|\Psi(y)|\le C_K(1+|y|^2)$ for all $y\in\R^d$. Hence,
$\int_{\R^d} |\Psi(y) \rho_\infty(y)| \, dy \le  \int_{\R^d} C_K(1+|y|^2) \rho_\infty(y) \, dy <\infty$, where the finiteness results from $\rho_\infty\in \Dc_2(\R^d)$. That is, $\Psi\rho_\infty\in L^1(\R^d)$. This, along with $\rho_\infty\ln(\rho_\infty)\in L^1(\R^d)$ and $\rho_\infty\in L^m(\R^d)$, implies $J_{\lambda,\eta}(\rho_\infty)<\infty$. 
\end{proof}

\begin{theorem}\label{thm:minimizer of J}
Fix $\lambda,\eta>0$ and $m>1$. Assume \eqref{>=log growth'} and \eqref{Psi' linear growth}. Then, $\rho_{\infty}$ in \eqref{rho_infty} is the unique minimizer of $J_{\lambda,\eta}:\mathcal D_2(\R^d)\to\R$ in \eqref{J}. 
\end{theorem}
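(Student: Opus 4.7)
The plan is to prove optimality by a direct convexity argument built around the equilibrium identity \eqref{Eq: Equilibrium identity}. First I would note that $J_{\lambda,\eta}$ is strictly convex on $\mathcal{D}_2(\R^d)$: the map $\rho\mapsto \int \Psi\rho\,dy$ is linear, while $s\mapsto \lambda s\ln s$ and $s\mapsto \tfrac{\eta}{m-1}s^m$ (with $m>1$) are strictly convex on $[0,\infty)$, so $\rho\mapsto\int F(\rho)\,dy$ is strictly convex on $\Dc_2(\R^d)$ in the usual linear sense. Hence $J_{\lambda,\eta}$ admits at most one minimizer, and Corollary~\ref{coro:rho_infty}(ii) guarantees $\rho_\infty\in\Dc_2(\R^d)$ with $J_{\lambda,\eta}(\rho_\infty)<\infty$, so a candidate for the minimum exists.

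Next, the key observation is that $\rho_\infty$ satisfies a first–order condition. Since $F'(s)=\lambda(\ln s+1)+\tfrac{\eta m}{m-1}s^{m-1}$, the equation \eqref{Eq: Equilibrium identity} (which $\rho_\infty$ fulfills by Remark~\ref{rem:equation for U(y;C)} and Lemma~\ref{lem:U}) may be rearranged into
\begin{equation*}
F'(\rho_\infty(y))+\Psi(y)=\mu,\qquad\mu:=\tfrac{\eta m}{m-1}+C^*+\lambda,\quad y\in\R^d.
\end{equation*}
Thus the Lagrangian $\Psi+F'(\rho_\infty)$ is constant, which is exactly the Euler–Lagrange condition for minimizing $J_{\lambda,\eta}$ under the unit-mass constraint $\int\rho\,dy=1$.

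To convert this into a genuine minimization inequality, I would fix any $\rho\in\Dc_2(\R^d)$ with $J_{\lambda,\eta}(\rho)<\infty$ (the case $J_{\lambda,\eta}(\rho)=\infty$ being trivial) and invoke the convex inequality $F(\rho(y))-F(\rho_\infty(y))\ge F'(\rho_\infty(y))(\rho(y)-\rho_\infty(y))$ pointwise on $\R^d$. The nonnegative integrand $F(\rho)-F(\rho_\infty)-F'(\rho_\infty)(\rho-\rho_\infty)$ has a well-defined integral in $[0,\infty]$; using $F'(\rho_\infty)=\mu-\Psi$ one rewrites the cross term as $(\mu-\Psi)(\rho-\rho_\infty)$, each of whose pieces is integrable because $\rho,\rho_\infty\in L^1(\R^d)$ and $\Psi\rho,\Psi\rho_\infty\in L^1(\R^d)$ (the latter from $J_{\lambda,\eta}(\rho),J_{\lambda,\eta}(\rho_\infty)<\infty$). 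Splitting and using $\int\rho\,dy=\int\rho_\infty\,dy=1$, the $\mu$-terms cancel and one obtains
\begin{equation*}
0\le \int_{\R^d}\big[F(\rho)-F(\rho_\infty)-F'(\rho_\infty)(\rho-\rho_\infty)\big]\,dy
= J_{\lambda,\eta}(\rho)-J_{\lambda,\eta}(\rho_\infty).
\end{equation*}

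This proves $\rho_\infty$ is a minimizer, and strict convexity of $F$ then forces equality only when $\rho=\rho_\infty$ $\Lc^d$-a.e., giving uniqueness. The main point requiring care is the integrability bookkeeping: one cannot rearrange the inequality freely because $F'(\rho_\infty)(\rho-\rho_\infty)$ could have large positive and negative parts where $\rho_\infty$ is tiny (so $F'(\rho_\infty)$ is very negative) or where $\Psi$ is large. The trick is to keep the nonnegative combination $F(\rho)-F(\rho_\infty)-F'(\rho_\infty)(\rho-\rho_\infty)$ together (so its integral is an unambiguous element of $[0,\infty]$) and only split it into pieces after substituting $F'(\rho_\infty)=\mu-\Psi$, at which point each summand is integrable by the finiteness of $J_{\lambda,\eta}$ at both $\rho$ and $\rho_\infty$.
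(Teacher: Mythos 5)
Your proof is correct, and it takes a genuinely different (more self-contained) route than the paper. The paper's proof is very terse: it defines $\Phi(z):=\int_0^z h(s)\,ds$ and $E(\rho):=\int_{\R^d}\Psi\rho+\Phi(\rho)\,dy$, observes that $E(\rho)=J_{\lambda,\eta}(\rho)-(\lambda+\tfrac{\eta m}{m-1})$, and then simply cites Lemma 6 of \cite{Carrillo2001} to conclude $\rho_\infty$ uniquely minimizes $E$, hence $J_{\lambda,\eta}$. Your argument instead unpacks what that cited lemma does: you identify the Euler--Lagrange/first-order condition $F'(\rho_\infty)+\Psi\equiv\mu$ directly from \eqref{Eq: Equilibrium identity}, then run the pointwise convexity inequality $F(\rho)-F(\rho_\infty)\ge F'(\rho_\infty)(\rho-\rho_\infty)$ and integrate. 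Your handling of the integrability is the right way to make this rigorous: keeping the nonnegative quantity $F(\rho)-F(\rho_\infty)-F'(\rho_\infty)(\rho-\rho_\infty)$ intact, substituting $F'(\rho_\infty)=\mu-\Psi$, and only then splitting into the six pieces $F(\rho)$, $F(\rho_\infty)$, $\mu\rho$, $\mu\rho_\infty$, $\Psi\rho$, $\Psi\rho_\infty$, each of which is genuinely in $L^1(\R^d)$ under $J_{\lambda,\eta}(\rho)<\infty$ and $J_{\lambda,\eta}(\rho_\infty)<\infty$ (via Remark~\ref{rem:J} and Corollary~\ref{coro:rho_infty}), and noting $\rho_\infty>0$ everywhere so $F'(\rho_\infty)$ is finite. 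Strict convexity of $F$ on $(0,\infty)$ then forces the integrand to vanish identically only when $\rho=\rho_\infty$ a.e. The trade-off is clear: the paper's proof is shorter because it outsources the convexity bookkeeping to an external reference, while yours is longer but self-contained and makes the mechanism (Euler--Lagrange condition plus Bregman-type convexity gap) explicit. Both are valid.
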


\begin{proof}
By Corollary~\ref{coro:rho_infty}, $\rho_\infty\in\Dc_2(\R^d)$ and $J_{\lambda,\eta}(\rho_\infty)<\infty$. 
Recall $h:(0,\infty)\to\R$ in \eqref{h} 
and define $\Phi:[0,\infty)\to\R$ by $\Phi(z) := \int_0^z h(s) ds = {\lambda} z \ln z  + \frac{\eta}{m-1} z^{m}  - \big(\lambda+\frac{\eta m}{m-1}\big)z$ for $z\ge 0$.
Then, consider $E:\mathcal D_2(\R^d)\to\R\cup\{\infty\}$ defined by 
$E(\rho) :=  \int_{\mathbb{R}^{d}}  \Psi(y)\rho(y) + \Phi(\rho(y))\, dy$ for $\rho\in\Dc_2(\R^d).$ 
In view of \eqref{J}, a direct calculation shows that 
$E(\rho) = J_{\lambda,\eta}(\rho) -  \big(\lambda+\frac{\eta m}{m-1}\big)\ \forall \rho\in\Dc_2(\R^d).$ 
As $J_{\lambda,\eta}(\rho_\infty)<\infty$, we have $E(\rho_\infty)<\infty$. Under this condition, \cite[Lemma 6]{Carrillo01} asserts that $\rho_\infty$ is the unique minimizer of $E$, whence also the unique minimizer of $J_{\lambda,\eta}$. 
\end{proof}


\section{Convergence to the Minimizer of $J_{\lambda,\eta}$}\label{sec:convergence}
In this section, we will prove the long-time convergence of $\{\rho^{Y_t}\}_{t\ge 0}$, the density flow induced by a solution $Y$ to SDE \eqref{SDE mixed}, to the minimizer $\rho_\infty$ of the functional $J_{\lambda,\eta}$ in \eqref{J}. As $\{\rho^{Y_t}\}_{t\ge 0}$ coincides with the solution $\rho:[0,\infty)\to\Dc_2(\R^d)$ to the Fokker-Planck equation \eqref{FPE mixed} obtained in Theorem~\ref{thm:sol. to FPE} (recall Theorem~\ref{thm:uniqueness Y}), we will mainly focus on analyzing the long-time convergence of $\rho(t)$. 

First, we note that by the construction of $\rho$, $t\mapsto J_{\lambda,\eta}(\rho(t))$ is non-decreasing. 

\begin{lemma} \label{lem:J decreasing}
Fix $\lambda\in(0,e/2)$, $\eta>0$, and $m>1$. Assume  $\Psi\in C^1(\R^d)$ and \eqref{Psi' linear growth}. For any $\rho_0\in \mathcal D_2(\R^d)$ such that $J_{\lambda,\eta}(\rho_0) <\infty$, let $\rho:[0,\infty)\to\Dc_2(\R^d)$ be the solution to \eqref{FPE mixed} obtained in Theorem~\ref{thm:sol. to FPE}. Then, 
$J_{\lambda, \eta}(\rho(t)) \leq J_{\lambda, \eta}(\rho_{0})$ for all $t\ge 0.$
If additionally $\nabla\Psi$ is Lipschitz, then $t\mapsto J_{\lambda, \eta}(\rho(t))$ is non-increasing.  
\end{lemma}

\begin{proof}
Take $\mu_0 = \rho_0\Lc^d\in\Pc_2(\R^d)$. For any $\tau>0$, recall the minimizing movement scheme \eqref{Theta}-\eqref{M in argmin}. As shown in the proof of Proposition~\ref{prop:L^m norm bdd}, $\Jc_{\lambda,\eta}(M^n_\tau)= J_{\lambda,\eta}(\rho^n_\tau)\le J_{\lambda,\eta}(\rho_0)$ for all $n\in\N$; see \eqref{J^n<=J^n-1}. By \eqref{overline M}, $\Jc_{\lambda,\eta}(\overline M_\tau(t))\le J_{\lambda,\eta}(\rho_0)$ for all $t\ge 0$. As $\tau\downarrow 0$, recall from Remark~\ref{rem:Thm 11.1.6} that $\overline M_\tau(t)\to \rho(t)\Lc^d$ weakly in $\Pc(\R^d)$ for all $t>0$. Hence, by the lower semicontinuity of $\Jc_{\lambda,\eta}$ w.r.t.\ weak convergence in $\Pc(\R^d)$ (Remark~\ref{rem:LSC of Jc}), we get $\Jc_{\lambda,\eta}(\rho(t)\Lc^d)\le J_{\lambda,\eta}(\rho_0)$, i.e., $J_{\lambda, \eta}(\rho(t)) \leq J_{\lambda, \eta}(\rho_{0})$, for all $t\ge 0$. If $\nabla\Psi$ is Lipschitz, Corollary~\ref{coro:flow property} implies that for any $0\le s<t$, $\rho(t) = S(t)[\rho_0] = S(t-s) [S(s)[\rho_0]]= S(t-s)[\rho(s)]$ $\Lc^d$-a.e. Thus, $J_{\lambda,\eta}(\rho(t)) = J_{\lambda,\eta}(S(t-s)[\rho(s)]) \le J_{\lambda,\eta}(\rho(s))$. 
\end{proof}

In fact, how $J_{\lambda,\eta}(\rho(t))$ decreases over time can be characterized much more explicitly. To this end, we need to first show that $\Jc_{\lambda,\eta}$ in \eqref{Jc} is {\it regular}, following \cite[Definition 10.1.4]{Ambrosio08}. 

\begin{definition}
\label{def:regular}
Consider a lower semicontinuous $\phi: \mathcal{P}_{2}(\R^d) \to (-\infty,\infty]$ such that \eqref{D(phi)} holds. 
We say $\phi$ is regular if whenever $\mu_n\in \Pc_2(\R^d)$ and $\xi_n\in \partial \phi(\mu_n)$, $n\in\N$, satisfy
(i) $\mu_{n} \to \mu$ in $\mathcal{P}_{2}(\R^d)$ for some $\mu\in\Pc_2(\R^d)$, (ii) $\phi(\mu_n)\to \ell$ for some $\ell\in\R$, (iii) $\sup_{n\in\N} \|\xi_{n}\|_{L^{2}(\R^d,\mu_{n})} < \infty$, and (iv) ``$\xi_n$ converges weakly to some $\xi\in L^2(\R^d,\mu)$'' in the following sense:
\begin{equation}\label{weak converg. to xi}
\lim_{n\to\infty} \int_{\R^d} \varphi(y) \xi_n(y) d\mu_n(y) =  \int_{\R^d} \varphi(y) \xi(y) d\mu(y)\quad \forall \varphi\in C^\infty_c(\R^d), 
\end{equation}
then  $\xi \in \partial \phi(\mu)$ and $\phi(\mu)=\ell$.
 \end{definition}


\begin{lemma}\label{lem:Jc regular}
Fix $\lambda\in(0,e/2)$, $\eta>0$, and $m>1$. Assume $\Psi\in C^1(\R^d)$ and \eqref{Psi' linear growth}. Then, $\Jc_{\lambda,\eta}:\Pc_2(\R^d)\to (-\infty,\infty]$ in \eqref{Jc} is regular. 
\end{lemma}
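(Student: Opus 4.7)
The plan is to reduce the regularity of $\Jc_{\lambda,\eta}$ to the known regularity of the internal energy $\Ic_F$ (via \cite[Theorem 10.4.13]{Ambrosio08}) and the continuity/single-valuedness of the potential energy $\Gc$. Suppose we are given $\mu_n, \xi_n, \mu, \xi, \ell$ as in Definition~\ref{def:regular}. Since $\Jc_{\lambda,\eta}$ is lower semicontinuous w.r.t.\ weak convergence in $\Pc(\R^d)$ (Remark~\ref{rem:LSC of Jc}) and $\Jc_{\lambda,\eta}(\mu_n)\to\ell<\infty$, we have $\Jc_{\lambda,\eta}(\mu)<\infty$, so $\mu=\rho\Lc^d$ for some $\rho\in\Dc_2(\R^d)$, and likewise $\mu_n=\rho_n\Lc^d$ with $\rho_n\in\Dc_2(\R^d)$. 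By Corollary~\ref{coro:subdiff.=}, $\xi_n=\nabla\Psi+w_n$ with $w_n:=\nabla L_F(\rho_n)/\rho_n\in\partial\Ic_F(\mu_n)$.

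Next, I would transfer the hypotheses of Definition~\ref{def:regular} from $\xi_n$ to $w_n$. Under \eqref{Psi' linear growth}, $\Gc$ is continuous on $\Pc_2(\R^d)$ (the integrand $\Psi$ has at most quadratic growth), so $\Gc(\mu_n)\to\Gc(\mu)$. For the weak convergence \eqref{weak converg. to xi}, for any $\varphi\in C^\infty_c(\R^d)$ the map $y\mapsto \varphi(y)\nabla\Psi(y)$ is continuous with linear growth, hence $\int \varphi\,\nabla\Psi\,d\mu_n\to \int\varphi\,\nabla\Psi\,d\mu$ by $\Pc_2$-convergence; subtracting gives $\int\varphi\, w_n\,d\mu_n\to\int\varphi\,w\,d\mu$ with $w:=\xi-\nabla\Psi$. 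Bounded second moments along $\mu_n\to\mu$ in $\Pc_2$ combined with \eqref{Psi' linear growth} give $\sup_n\|\nabla\Psi\|_{L^2(\mu_n)}<\infty$, so $\sup_n\|w_n\|_{L^2(\mu_n)}<\infty$ as well. Finally, $\Ic_F(\mu_n)=\Jc_{\lambda,\eta}(\mu_n)-\Gc(\mu_n)\to\ell-\Gc(\mu)=:\ell_F\in\R$.

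Now I would invoke regularity of $\Ic_F$: under the doubling condition (Lemma~\ref{lem:doubling}), the superlinear growth of $F$ at infinity, and the McCann condition \eqref{makes I_G convex} (all verified in the paper), \cite[Theorem 10.4.13]{Ambrosio08} asserts that $\Ic_F$ is regular in the sense of Definition~\ref{def:regular}. Applied to $(\mu_n,w_n)$, this yields $w\in\partial\Ic_F(\mu)$ and $\Ic_F(\mu)=\ell_F$. Consequently $\Jc_{\lambda,\eta}(\mu)=\Gc(\mu)+\Ic_F(\mu)=\ell$.

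It remains to conclude $\xi\in\partial\Jc_{\lambda,\eta}(\mu)$. Since $w\in\partial\Ic_F(\mu)$, Lemma~\ref{lem:subdiff.=}(ii) (equivalently \eqref{subdiff. I_G}) gives $L_F(\rho)\in W^{1,1}_{\operatorname{loc}}(\R^d)$ and $w=\nabla L_F(\rho)/\rho\in L^2(\R^d,\rho\Lc^d)$. Adding the $L^2(\R^d,\rho\Lc^d)$ function $\nabla\Psi$ (which is there by \eqref{Psi' linear growth}) yields $\xi=\nabla\Psi+\nabla L_F(\rho)/\rho\in L^2(\R^d,\rho\Lc^d)$, and the sufficiency part of \eqref{subdiff. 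J} in Corollary~\ref{coro:subdiff.=} delivers $\xi\in\partial\Jc_{\lambda,\eta}(\mu)$. The main obstacle is the invocation of \cite[Theorem 10.4.13]{Ambrosio08} for $\Ic_F$; if its statement or hypotheses do not apply verbatim to our weak-convergence notion \eqref{weak converg. to xi}, one would fall back on the convexity-along-geodesics inequality \eqref{subdifferential' convex} for $\Ic_F$ and pass to the limit in $\int w_n\cdot(\bm t_{\mu_n}^\nu-\bm i)\,d\mu_n$ using stability of optimal transport plans (\cite[Proposition 7.1.3]{Ambrosio08}) together with the uniform $L^2$-bound on $w_n$ and the doubling-based lower semicontinuity of $\Ic_F$.
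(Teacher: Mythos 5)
Your proposal is correct and follows essentially the same route as the paper: decompose $\Jc_{\lambda,\eta}=\mathcal G+\Ic_F$, transfer the four hypotheses of Definition~\ref{def:regular} from $\xi_n$ to $w_n:=\xi_n-\nabla\Psi$ using the $\Pc_2$-convergence of $\mu_n$ together with \eqref{Psi' linear growth}, invoke regularity of $\Ic_F$, and reassemble via Corollary~\ref{coro:subdiff.=}. The only cosmetic difference is the reference for regularity of $\Ic_F$: the paper derives it from \cite[Proposition 9.3.9]{Ambrosio08} (McCann's condition \eqref{makes I_G convex} gives geodesic convexity) combined with \cite[Lemma 10.1.3]{Ambrosio08} (geodesically convex functionals are regular), whereas you cite \cite[Theorem 10.4.13]{Ambrosio08}; your stated fallback argument (convexity-along-geodesics inequality and pass to the limit) is precisely the content of Lemma~10.1.3, so it lands in the same place.
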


\begin{proof}
By \eqref{Psi' linear growth}, there exists $C_K>0$ (depending on $K>0$ in \eqref{Psi' linear growth}) such that $\Psi(y)\le C_K(1+|y|^2)$ for all $y\in\R^d$. Hence,
$\int_{\R^d} \Psi \rho\, dy <\infty$ for all $\rho\in\Dc_2(\R^d)$. Also note that $\int_{\R^d} (\rho\ln\rho)^+dy \le \int_{\R^d} \rho^2 dy$ for all $\rho\in \Dc(\R^d)$. We then deduce from Remark~\ref{rem:J} that $J_{\lambda,\eta}(\rho)<\infty$ if $\rho\in \Dc_2(\R^d)\cap L^{m\vee 2}(\R^d)$. Thus, in view of \eqref{Jc}, $D(\Jc_{\lambda,\eta})\neq\emptyset$ and $\mu\in D(\Jc_{\lambda,\eta})$ implies $\mu=\rho\Lc^d$ for some $\rho\in\Dc_2(\R^d)$. Also recall from Remark~\ref{rem:LSC of Jc} that $\Jc_{\lambda,\eta}$ is lower semicontinuous. 

Take $\mu_n\in \Pc_2(\R^d)$ and $\xi_n\in \partial \Jc_{\lambda,\eta}(\mu_n)$, $n\in\N$, that satisy conditions (i)-(iv) in Definition~\ref{def:regular}. By Definition~\ref{def:subdifferential}, $\mu_n$ must lie in $D(\Jc_{\lambda,\eta})$, such that $\mu_n = \rho_n\Lc^d$ for some $\rho_n\in\Dc_2(\R^d)$. Then, by Corollary~\ref{coro:subdiff.=}, $\xi_n\in \partial \Jc_{\lambda,\eta}(\rho_n\Lc^d)$ readily implies 
$
\xi_n = \nabla\Psi+{\nabla L_F(\rho_n)}/{\rho_n}$ \hbox{and} $\partial\Ic_F(\rho_n\Lc^d) = \{\nabla L_F(\rho_n)/\rho_n\}. 
$
As $\mu_n\to\mu$ in $\Pc_2(\R^d)$ and $\nabla\Psi$ is continuous and satisfies \eqref{Psi' linear growth}, by \cite[Definition 6.8 (iv) and Theorem 6.9]{Villani-book-09}, we have (a) $\int_{\R^d} \nabla\Psi d\mu_n\to \int_{\R^d} \nabla\Psi d\mu<\infty$; (b) $\int_{\R^d} |\nabla\Psi|^2 d\mu_n\to \int_{\R^d} |\nabla\Psi|^2 d\mu<\infty$. This has several implications: (i) Recall from \eqref{J=G+I_F} that $\mathcal J_{\lambda,\eta} (\bar\mu) = \mathcal G(\bar\mu) +  \mathcal I_F(\bar\mu) = \int_{\R^d} \nabla\Psi d\bar\mu+ \mathcal I_F(\bar\mu)$ for $\bar\mu\in\Pc_2(\R^d)$. Hence, by $\mathcal J_{\lambda,\eta}(\mu_n)\to \ell$ and (a), we get $\mathcal I_F(\mu_n)\to \ell - \int_{\R^d} \nabla\Psi d\mu\in\R$. (ii) By (b), $\sup_{n\in\N}\|\nabla\Psi\|_{L^2(\R^d,\mu_n)}<\infty$. Since ${\nabla L_F(\rho_n)}/{\rho_n} = \xi_n -\nabla\Psi$ and $\sup_{n\in\N} \|\xi_{n}\|_{L^{2}(\R^d,\mu_{n})} < \infty$, this implies $\sup_{n\in\N} \|\nabla L_F(\rho_n)/{\rho_n}\|_{L^{2}(\R^d,\mu_{n})} < \infty$. (iii) For any $\varphi\in C^\infty_c(\R^d)$, as $\varphi \nabla\Psi$ is continuous and bounded, $\int_{\R^d} \varphi \nabla\Psi d\mu_n \to \int_{\R^d} \varphi\nabla\Psi d\mu$. This, along with \eqref{weak converg. to xi}, implies that ${\nabla L_F(\rho_n)}/{\rho_n} = \xi_n -\nabla\Psi$ converges weakly to $\xi-\nabla\Psi\in L^2(\R^d,\mu)$ (i.e., \eqref{weak converg. to xi} is fulfilled with $\xi_n$ and $\xi$ therein replaced by ${\nabla L_F(\rho_n)}/{\rho_n}$ and $\xi-\nabla\Psi$, respectively). Now, in view of \eqref{Ic} and \eqref{F}, $\Ic_F:\Pc_2(\R^d)\to (-\infty,\infty]$ is geodesically convex (by \cite[Proposition 9.3.9]{Ambrosio08}) and therefore regular due to \cite[Lemma 10.1.3]{Ambrosio08}. Hence, the properties (i), (ii), and (iii) above imply $\xi-\nabla\Psi\in \partial\Ic_F(\mu)$ and $\Ic_F(\mu) = \ell - \int_{\R^d} \nabla\Psi d\mu\in\R$. 
As $\partial \mathcal G(\mu) = \{\nabla\Psi\}$ (see the proof of  Corollary~\ref{coro:subdiff.=}), we deduce from $\mathcal J_{\lambda,\eta} (\mu) = \mathcal G(\mu) +  \mathcal I_F(\mu)$ and Definition~\ref{def:subdifferential} that $\xi = \nabla\Psi + (\xi-\nabla\Psi)\in \partial \Jc_{\lambda,\eta}(\mu)$ and
$
\mathcal J_{\lambda,\eta} (\mu) = \int_{\R^d} \nabla\Psi d\mu + (\ell - \int_{\R^d} \nabla\Psi d\mu) = \ell.
$
We therefore conclude that $\Jc_{\lambda,,\eta}$ is regular. 
\end{proof}

As $\Jc_{\lambda,\eta}$ is regular, we can invoke a ``{chain rule}'' for absolutely continuous curves in $\Pc_2(\R^d)$. 

\begin{proposition}\label{prop:J'(t)}
Fix $\lambda\in(0,e/2)$, $\eta>0$, and $m>1$. Suppose that $\Psi\in C^1(\R^d)$, \eqref{Psi' linear growth} holds, and $\nabla\Psi$ is Lipschitz. For any $\rho_0\in \mathcal D_2(\R^d)$ such that $J_{\lambda,\eta}(\rho_0) <\infty$, let $\rho:[0,\infty)\to\Dc_2(\R^d)$ be the solution to \eqref{FPE mixed} obtained in Theorem~\ref{thm:sol. to FPE}. 
Then, 
        $\frac{{d}}{dt} J_{\lambda, \eta}(\rho(t)) = - \|v_\rho(t,\cdot)\|_{L^{2}(\R^d,\rho(t)\Lc^d)}^{2}$ for $\mathcal{L}^1$-a.e.\ $t > 0,$
where $v_\rho(t,y)$ is given by \eqref{v}. Hence,
    \begin{equation} \label{J<=J-}
        J_{\lambda, \eta}(\rho(t))  \leq J_{\lambda, \eta}(\rho_{0}) - \int_{0}^{t} \|v_\rho(r,\cdot)\|_{L^{2}(\mathbb{R}^{d}; \rho(r)\Lc^d)}^{2} \, dr, \quad\forall t \ge 0.
    \end{equation}
\end{proposition}

\begin{proof}
Consider $\mu_t := \rho(t)\Lc^d\in\Pc_2(\R^d)$ for $t\ge0$. By Theorem~\ref{thm:sol. to FPE}, $\{\mu_t\}_{t\ge 0}\in AC^2_{\operatorname{loc}}([0,\infty);\Pc_2(\R^d))$ and $\partial\mathcal J_{\lambda,\eta}(\mu_t) = \{-v_\rho(t,\cdot)\}$ for $\Lc^1$-a.e.\ $t>0$. Also note that \eqref{FPE mixed} can be equivalently written as $\partial_{t} \mu_{t} + \nabla \cdot (v_\rho \mu_{t}) = 0$. Now, for any $T>0$, we have $\{\mu_t\}_{t\in (0,T)}\in AC((0,T);\Pc_2(\R^d))$. As $\Jc_{\lambda, \eta}$ is regular (by Lemma~\ref{lem:Jc regular}), we may apply the chain rule in \cite[Section 10.1.2, part E]{Ambrosio08} to $t\mapsto\Jc_{\lambda, \eta}(\mu_t)$ at any $t\in(0,T)$ where the next three conditions hold: (a) The local slope  
$
|\partial \Jc_{\lambda, \eta}|(\mu_t) := \limsup_{\nu\to\mu_t} \frac{(\Jc_{\lambda, \eta}(\mu_t) - \Jc_{\lambda, \eta}(\nu))^+}{\Wc_2(\mu_t,\nu)}
$
is finite; (b) $\Jc_{\lambda,\eta}(\mu_t)$ is approximately differentiable at $t$; (c) \cite[(8.4.6)]{Ambrosio08} is satisfied at $t$. For $\Lc^1$-a.e.\ $t\in (0,T)$, as $\partial\mathcal J_{\lambda,\eta}(\mu_t) = \{-v_\rho(t,\cdot)\}$, by using \eqref{subdifferential'} and H\"{o}lder's inequality, $|\partial \Jc_{\lambda, \eta}|(\mu_t)\le \|v_\rho(t,\cdot)\|^2_{L^2(\R^d,\mu_t)}<\infty$. Also, as $t\mapsto\Jc_{\lambda,\eta}(\mu_t)=J_{\lambda,\eta}(\rho(t))$ is non-increasing (by Lemma~\ref{lem:J decreasing}), $\Jc_{\lambda,\eta}(\mu_t)$ is $\Lc^1$-a.e.\ differentiable (and thus approximately differentiable). Finally, for $\Lc^1$-a.e.\ $t\in (0,T)$, as $\partial\mathcal J_{\lambda,\eta}(\mu_t) = \{-v_\rho(t,\cdot)\}$ readily implies $v_\rho(t,\cdot)\in \operatorname{Tan}_{\mu_t}\Pc_2(\R^d)$ (recall Remark~\ref{rem:in Tan}), \cite[Proposition 8.4.6]{Ambrosio08} asserts that  \cite[(8.4.6)]{Ambrosio08} is fulfilled. With (a), (b), and (c) all satisfied $\Lc^1$-a.e.\ on $(0,T)$, the chain rule (specifically \cite[(10.1.16)]{Ambrosio08}) gives $\frac{{d}}{dt} \Jc_{\lambda, \eta}(\mu_t) = - \|v_\rho(t,\cdot)\|_{L^{2}(\R^d,\mu_t)}^{2}$ for $\Lc^1$-a.e.\ $t\in(0,T)$. As $T>0$ is arbitrary, the relation holds for $\Lc^1$-a.e.\ $t>0$. Thanks again to $t\mapsto J_{\lambda,\eta}(\rho_t)$ being non-increasing, \cite[Proposition 1.6.37]{Tao2011} asserts that 
       $
         J_{\lambda, \eta}(\rho(t)) \le  J_{\lambda, \eta}(\rho_{0}) +  \int_{0}^{t} \frac{{d}}{dr} J_{\lambda, \eta}(\rho(r)) \, dr 
    $ 
    for all $t\ge 0$. We therefore obtain \eqref{J<=J-}. 
\end{proof}

Now, we are ready to present the main convergence result of this paper. 

\begin{theorem}\label{thm:convergence to rho_infty}
Fix $\lambda\in(0,e/2)$, $\eta>0$, and $m>1$. Suppose that $\Psi\in C^1(\R^d)$ satisfies \eqref{>=log growth'} and $\nabla\Psi$ is Lipschitz and fulfills \eqref{Psi' linear growth}. For any $\rho_0\in \mathcal D_2(\R^d)$ such that $J_{\lambda,\eta}(\rho_0) <\infty$, let $\rho:[0,\infty)\to\Dc_2(\R^d)$ be the solution to \eqref{FPE mixed} obtained in Theorem~\ref{thm:sol. to FPE}. 
Then, there exists $\{t_n\}_{n\in\N}$ in $[0,\infty)$ with $t_n\uparrow \infty$ such that 
\begin{equation*}
\rho({t_n})\Lc^d \to \rho_\infty \Lc^d\quad \hbox{weakly in $\Pc(\R^d)$},\ \ \hbox{as $n\to\infty$},
\end{equation*}
where $\rho_\infty\in\Dc_2(\R^d)$, defined in \eqref{rho_infty}, is the unique minimizer of $J_{\lambda,\eta}$ in \eqref{J}. 
\end{theorem}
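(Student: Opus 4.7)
The plan is to extract a weak subsequential limit $\rho_*\Lc^d$ of $\{\rho(t)\Lc^d\}_{t\ge 0}$ and identify $\rho_*$ with the unique minimizer $\rho_\infty$ by showing that $\rho_*$ satisfies the characterizing equation \eqref{Eq: Equilibrium identity}, so by Lemma \ref{lem:U} and Corollary \ref{coro:rho_infty} (together with the normalization $\int\rho_*\,dy=1$) it must coincide with $U(\cdot;C^*)=\rho_\infty$.

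\textbf{Step 1 (tightness and $L^m$-bound).} Using \eqref{Eq: Equilibrium identity} with $C=C^*$ to eliminate $\Psi$ in $J_{\lambda,\eta}(\rho(t))$ from \eqref{J}, the entropy piece combines with $-\lambda\int\rho(t)\ln\rho_\infty\,dy$ to give $\lambda\,\KL(\rho(t)\Lc^d\,\|\,\rho_\infty\Lc^d)$, while the cross $m$-th-power terms are controlled by Young's inequality $\rho_\infty^{m-1}\rho\le\frac{m-1}{m}\rho_\infty^m+\frac1m\rho^m$. This yields
\[
\lambda\,\KL\bigl(\rho(t)\Lc^d\,\big\|\,\rho_\infty\Lc^d\bigr)\le J_{\lambda,\eta}(\rho(t))+C_0\le J_{\lambda,\eta}(\rho_0)+C_0
\]
by Lemma \ref{lem:J decreasing}, with $C_0$ depending only on $\rho_\infty$. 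Since $\rho_\infty$ has polynomially decaying tails (Corollary \ref{coro:rho_infty} under \eqref{>=log growth'}), this uniform KL-bound implies tightness of $\{\rho(t)\Lc^d\}_{t\ge 0}$ in $\Pc(\R^d)$. A parallel manipulation---bounding $\int\rho(t)\ln\rho(t)\,dy\ge\int\rho(t)\ln\rho_\infty\,dy$ and using the explicit expression for $\ln\rho_\infty$ together with $\|\rho_\infty\|_{L^\infty}<\infty$---lets the $\int\Psi\rho(t)\,dy$ terms cancel in $J_{\lambda,\eta}(\rho(t))\le J_{\lambda,\eta}(\rho_0)$, producing a uniform global $L^m$-bound $\|\rho(t)\|_{L^m(\R^d)}\le C$.

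\textbf{Step 2 (vanishing slope along a subsequence).} By Proposition \ref{prop:J'(t)} and Theorem \ref{thm:minimizer of J},
\[
\int_0^\infty\|v_\rho(r,\cdot)\|^2_{L^2(\R^d,\rho(r)\Lc^d)}\,dr\le J_{\lambda,\eta}(\rho_0)-J_{\lambda,\eta}(\rho_\infty)<\infty,
\]
so there exists $t_n\uparrow\infty$ with $\|v_\rho(t_n,\cdot)\|_{L^2(\R^d,\rho(t_n)\Lc^d)}\to 0$. Combined with the tightness of Step 1 and a further diagonal extraction (not relabeled), $\rho(t_n)\Lc^d\to\rho_*\Lc^d$ weakly in $\Pc(\R^d)$ for some density $\rho_*$.

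\textbf{Step 3 (identification via $BV_{\operatorname{loc}}$-compactness).} Writing $\rho_n:=\rho(t_n)$, Corollary \ref{coro:subdiff.=} and Theorem \ref{thm:sol. to FPE} give $-v_\rho(t_n,\cdot)=\nabla\Psi+\nabla L_F(\rho_n)/\rho_n$, with $L_F(\rho_n)\in W^{1,1}_{\operatorname{loc}}$. Squaring, integrating against $\rho_n$ over a compact $K\subset\R^d$, and using $\rho_n(K)\le 1$ with \eqref{Psi' linear growth} yields the $n$-uniform bound
\[
\int_K\frac{|\nabla L_F(\rho_n)|^2}{\rho_n}\,dy\le 2\|v_\rho(t_n,\cdot)\|^2_{L^2(\R^d,\rho_n\Lc^d)}+2\|\nabla\Psi\|^2_{L^\infty(K)}.
\]
Because \eqref{nabla L_F} gives $|\nabla L_F(\rho_n)|\ge\lambda|\nabla\rho_n|$ and $|\nabla L_F(\rho_n)|\ge\eta|\nabla\rho_n^m|$, Cauchy--Schwarz applied to $\int_K|\nabla\rho_n|=\int_K(|\nabla\rho_n|/\sqrt{\rho_n})\sqrt{\rho_n}$ (and the analog for $\rho_n^m$) produces uniform $L^1(K)$-bounds on both $\nabla\rho_n$ and $\nabla\rho_n^m$. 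Combined with the uniform $L^m(\R^d)$-bound from Step 1, $\{\rho_n\}$ and $\{\rho_n^m\}$ are bounded in $W^{1,1}(K)$, hence precompact in $L^1(K)$ by Rellich--Kondrachov. A common sub-subsequence therefore satisfies $\rho_n\to\rho_*$ and $\rho_n^m\to\rho_*^m$ in $L^1_{\operatorname{loc}}(\R^d)$ and pointwise a.e. Testing the subgradient identity against $\phi\in C^1_c(\R^d;\R^d)$ and integrating by parts,
\[
\int_{\R^d}\langle\nabla\Psi,\phi\rangle\rho_n\,dy-\int_{\R^d}\bigl(\lambda\rho_n+\eta\rho_n^m\bigr)\Div\phi\,dy=-\int_{\R^d}\langle v_\rho(t_n,\cdot),\phi\rangle\rho_n\,dy\;\longrightarrow\; 0
\]
by Cauchy--Schwarz; passing to the limit gives the distributional identity $\nabla\Psi+\lambda\nabla\ln\rho_*+\frac{\eta m}{m-1}\nabla\rho_*^{m-1}=0$ on $\{\rho_*>0\}$. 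Together with $\int\rho_*\,dy=1$ and Lemma \ref{lem:U}, this forces $\rho_*=U(\cdot;C^*)=\rho_\infty$.

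The main delicacy is Step 3: one must simultaneously use the vanishing-slope $\|v_\rho(t_n,\cdot)\|_{L^2(\rho_n\Lc^d)}\to 0$, the Fisher-information-type estimate on $\int|\nabla L_F(\rho_n)|^2/\rho_n\,dy$, and the uniform global $L^m$-bound to produce the $BV_{\operatorname{loc}}$-compactness of \emph{both} $\{\rho_n\}$ and the nonlinear sequence $\{\rho_n^m\}$; a secondary subtlety is arguing that the integration constant on connected components of $\{\rho_*>0\}$ must be the same throughout $\R^d$, so that Lemma \ref{lem:U}'s uniqueness applies globally.
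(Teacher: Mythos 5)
Your Steps 1 and 2 match the paper's argument, and your Step 3 takes a somewhat cleaner route to the invariance equation $\nabla\Psi\,\rho_*+\nabla L_F(\rho_*)=0$: you apply Cauchy--Schwarz to the Fisher-information-type quantity $\int_K|\nabla L_F(\rho_n)|^2/\rho_n$ directly to get $W^{1,1}(K)$-bounds on $\rho_n$ and $\rho_n^m$, whereas the paper bounds $\int_S|\nabla L_F(\rho_n)|$ via a split on $\{|v_\rho|\le 1\}$ and $\{|v_\rho|>1\}$ and then invokes $BV_{\mathrm{loc}}$-compactness; and you pass to the limit in the weak form directly by Cauchy--Schwarz against $\varphi\in C^1_c$, avoiding the appeal to \cite[Theorem 5.4.4]{Ambrosio08} that the paper uses to extract a weak limit $v_*$ and show $v_*=0$. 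Both routes are valid and produce the same equation.

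However, there is a genuine gap at the very end, and you understate it. You get $\nabla\Psi\,\rho_*+\nabla L_F(\rho_*)=0$ in $\mathcal D'(\R^d)$, and you then divide by $\rho_*$ and invoke Lemma~\ref{lem:U}'s uniqueness. But Lemma~\ref{lem:U} characterizes the unique \emph{strictly positive} solution $U(\cdot;C)$ of \eqref{Eq: Equilibrium identity}. Dividing by $\rho_*$ is only licit on $\{\rho_*>0\}$, and a priori that set could have many connected components (each with its own constant) \emph{and} its complement $\{\rho_*=0\}$ could carry positive Lebesgue measure, which the normalization $\int\rho_*=1$ alone does not rule out: if $\rho_*=U(\cdot;C_i)$ on pieces and $\rho_*=0$ elsewhere, there is no contradiction with the mass being $1$. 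You flag the constant-matching issue as a ``secondary subtlety,'' but what is actually required (and what resolves both problems at once) is the a.e.\ strict positivity $\rho_*>0$, which is a nontrivial qualitative property of the limit and cannot be read off the invariance equation alone. The paper devotes a full argument to this: it regards the constant curve $\bar\rho(t)\equiv\rho_*$ as a stationary solution of \eqref{FPE mixed}, lifts it to a stationary diffusion via the Figalli--Trevisan superposition principle (using the $L^m$-bound and \eqref{Psi' linear growth} to verify the integrability hypothesis), and then applies a Girsanov change of measure to the SDE \eqref{SDE mixed u^*} to conclude the law of $Y^*_T$ charges every set of positive Lebesgue measure, hence $\Lc^d(\{\rho_*=0\})=0$. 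Without an argument of this type your identification $\rho_*=\rho_\infty$ does not go through.
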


\begin{proof}
By the definition of $\rho_\infty$ in \eqref{rho_infty} and Lemma~\ref{lem:U}, $\rho_\infty$ satisfies \eqref{Eq: Equilibrium identity} (with $C=C^*$), i.e.,
\begin{equation*} 
  \Psi(y)= - {\lambda}\ln \rho_\infty(y) - \frac{\eta m}{m-1} \rho_\infty^{m-1}(y) +\Big(\frac{\eta m}{m-1} + C^* \Big),\quad y\in\R^d. 
\end{equation*}
When we plug this into \eqref{J}, a direct calculation shows that for any $\rho\in\Dc_2(\R^d)$, 
\begin{align}\label{J=KL...}
J_{\lambda,\eta}(\rho) = \lambda \KL(\rho\Lc^d \| \rho_\infty\Lc^d) &+ \frac{\eta}{m-1} \int_{\R^d} \rho^m dy- \frac{\eta m}{m-1} \int_{\R^d} \rho^{m-1}_{\infty} \rho\, dy +\frac{\eta m}{m-1} +C^*,
\end{align}
where $\KL(\rho\Lc^d \| \theta\Lc^d) := \int_{\R^d} \rho(y) \ln(\rho(y)/\theta(y)) dy$ is the 
Kullback-Leibler divergence of $\rho\Lc^d$ from $\theta\Lc^d$, for any $\rho,\theta\in\Dc(\R^d)$ satisfying ``$\theta=0$ implies $\rho=0$''. Also, as $\Psi\ge 0$, we deduce from \eqref{rho_infty} and \eqref{U formula} that $\rho_\infty$ is bounded. Hence, by taking $\rho = \rho(t) = \rho(t,\cdot)$ in \eqref{J=KL...}, we get
\begin{align*}
\KL(\rho(t)\Lc^d \| \rho_\infty\Lc^d) &\le \frac{1}{\lambda} \left( J_{\lambda,\eta}(\rho(t))  + \frac{\eta m}{m-1} \int_{\R^d} \rho^{m-1}_{\infty}(y) \rho(t,y) dy-\frac{\eta m}{m-1} -C^*\right)\\
&\le \frac{1}{\lambda} \left( J_{\lambda,\eta}(\rho_0) + \frac{\eta m}{m-1} \|\rho_\infty\|^{m-1}_{L^\infty(\R^d)} -\frac{\eta m}{m-1} -C^*\right)<\infty,\quad \forall t>0,  
\end{align*}
where the inequality stems from Lemma~\ref{lem:J decreasing} and $\rho_\infty$ being bounded. With $t\mapsto \KL(\rho(t)\Lc^d \| \rho_\infty\Lc^d)$ uniformly bounded from above, $\{\rho(t)\Lc^d\}_{t>0}$ is tight by \cite[Lemma 1.4.3]{Dupuis97}. As $\KL(\rho\Lc^d \| \rho_\infty\Lc^d)\ge 0$ by definition, when we take $\rho = \rho(t) = \rho(t,\cdot)$ in \eqref{J=KL...}, a similar calculation yields
\begin{align*}
\int_{\R^d} \rho^m(t) dy &\le\frac{m-1}{\eta} \left( J_{\lambda,\eta}(\rho_0) + \frac{\eta m}{m-1} \|\rho_\infty\|^{m-1}_{L^\infty(\R^d)} -\frac{\eta m}{m-1} -C^*\right)<\infty,\quad \forall t>0.  
\end{align*}
That is, $\{\rho(t)\}_{t>0}$ is bounded in $L^m(\R^d)$. On the other hand, recall $v_\rho(t,y)$ in \eqref{v}. Thanks to \eqref{J<=J-} and the fact that $\rho_\infty$ minimizes $J_{\lambda,\eta}$ (by Theorem~\ref{thm:minimizer of J}),
\[
\int_{0}^{t} \|v_\rho(r,\cdot)\|_{L^{2}(\mathbb{R}^{d}; \rho(r)\Lc^d)}^{2} dr \le J_{\lambda, \eta}(\rho_{0}) -J_{\lambda, \eta}(\rho(t))\le J_{\lambda, \eta}(\rho_{0})-J_{\lambda, \eta}(\rho_{\infty})<\infty,\quad \forall t\ge 0, 
\]
Hence, as $t\to\infty$, we get $
\int_{0}^{\infty} \|v_\rho(r,\cdot)\|_{L^{2}(\mathbb{R}^{d}; \rho(r)\Lc^d)}^{2} dr \le J_{\lambda, \eta}(\rho_{0})-J_{\lambda, \eta}(\rho_{\infty})<\infty.
$
This implies that there exists $\{t_n\}_{n\in\N}$ in $[0,\infty)$ with $t_n\uparrow \infty$ such that  
\begin{equation}\label{v L^2 to 0}
\|v_\rho(t_n,\cdot)\|_{L^{2}(\mathbb{R}^{d}; \rho(t_n)\Lc^d)}^{2}\to 0\quad \hbox{as $n\to\infty$}.
\end{equation}
As $\{\rho(t_n)\Lc^d\}_{n\in\N}$ is tight, there is a subsequence (without relabeling) such that $\rho(t_n)\Lc^d\to \mu_*$ weakly for some $\mu_*\in\Pc(\R^d)$, i.e., 
$
\int_{\R^d} \phi(y) \rho(t_n,y) dy \to \int_{\R^d} \phi(y) d\mu_*(y)$ for all $\phi\in C^\infty_c(\R^d). 
$
Also, with $\{\rho(t_n)\}_{n\in\N}$ bounded in $L^m(\R^d)$, there is a further subsequence (without relabeling) such that $\rho(t_n)$ converges weakly in $L^m(\R^d)$, i.e., there exists $\rho_*\in L^m(\R^d)$ such that
$\int_{\R^d} \phi(y) \rho(t_n,y) dy \to \int_{\R^d} \phi(y) \rho_*(y) dy$ for all $\phi\in L^{\frac{m}{m-1}}(\R^d). $
We thus conclude 
$\mu_* = \rho_*\Lc^d$ with $\rho_*\in \Dc(\R^d)\cap L^m (\R^d)$.


With ``$\rho(t_n)\Lc^d \to \mu_* = \rho_*\Lc^d$ weakly in $\Pc(\R^d)$'' established, it remains to show $\rho_*=\rho_\infty$ $\Lc^d$-a.e. To this end, we need to prove several auxiliary results. Recall from Theorem~\ref{thm:sol. to FPE} that $L_F(\rho(t)) = \lambda \rho(t) +\eta\rho^m(t)\in W^{1,1}_{\operatorname{loc}}(\R^d)$ for all $t>0$ (as part of Definition~\ref{def:sol. to FPE}). We claim that $\{L_F(\rho(t_n))\}_{n\in\N}$ is bounded in $W^{1,1}(S)$ for any compact $S\subset \R^d$. As $\| L_F(\rho(t_n))\|_{L^1(\R^d)} \le \lambda +\eta \|\rho(t_n)\|^m_{L^m(\R^d)}$ and $\{\rho(t_n)\}_{n\in\N}$ is bounded in $L^m(\R^d)$, $\{L_F(\rho(t_n))\}_{n\in\N}$ is bounded in $L^1(\R^d)$. Also, by \eqref{nabla L_F} and \eqref{v}, $\nabla L_F(\rho(t_n)) = -(v_\rho(t_n,\cdot) + \nabla\Psi)\rho(t_n)$. Hence, on any compact $S\subset\R^d$, 
\begin{align*}
\int_S |\nabla L_F(\rho(t_n))| dy &\le \int_S |v_\rho(t_n,\cdot)|\rho(t_n) dy + M\int_S \rho(t_n) dy\le \int_{\R^d} |v_\rho(t_n,\cdot)|\rho(t_n) dy + M \int_{\R^d} \rho(t_n) dy\\
&\le \int_{\{|v_\rho(t_n,\cdot)|\le 1\}} |v_\rho(t_n,\cdot)|\rho(t_n) dy+\int_{\{|v_\rho(t_n,\cdot)|>1\}} |v_\rho(t_n,\cdot)|\rho(t_n) dy + M\\
&\le 1+ \|v_\rho(t_n,\cdot)\|^2_{L^2(\R^d,\rho(t_n)\Lc^d)} +M,\qquad \hbox{with}\ M:= \max_{y\in S}|\nabla\Psi(y)|<\infty. 
\end{align*}
As $\sup_{n\in\N}\|v_\rho(t_n,\cdot)\|^2_{L^2(\R^d,\rho(t_n)\Lc^d)}<\infty$ (due to \eqref{v L^2 to 0}), we conclude that $\{L_F(\rho(t_n))\}_{n\in\N}$ is bounded in $W^{1,1}(S)$, as desired. It follows that $\{L_F(\rho(t_n))\}_{n\in\N}$ is also bounded in $BV(S)$, the set of functions on $S$ of bounded variation, as the $BV(S)$-norm of any $f\in W^{1,1}(S)$ coincides with its $W^{1,1}(S)$-norm. Hence, by \cite[Theorem 3.23]{Ambrosio-book-00}, there is a further subsequence (without relabeling) such that $L_{F}(\rho(t_n)) \to f$ in $L_{\operatorname{loc}}^{1}(\mathbb{R}^{d})$ and pointwise $\Lc^d$-a.e., for some $f\in BV_{\operatorname{loc}}(\R^d)$. The same argument below \cite[(10.4.65)]{Ambrosio08} then shows $f = L_F(\rho_*)$ $\Lc^d$-a.e. With $L_F(\rho_*)\in BV_{\operatorname{loc}}(\R^d)$, its distributional derivative (which is a finite $\R^d$-valued measure on $\R^d$) exists. By the $L^2$ duality theory (following the proof of \cite[Theorem 10.4.6]{Ambrosio08}), this distributional derivative 
takes the form $w\rho_*\Lc^d$ for some $\R^d$-valued $w\in L^2(\R^d,\rho_*\Lc^d)$. Hence, $L_F(\rho_*)\in W^{1,1}_{\operatorname{loc}}(\R^d)$ with $\nabla L_F(\rho_*) = w \rho_*$. This, along with $L_{F}(\rho(t_n)) \to L_F(\rho_*)$ in $L_{\operatorname{loc}}^{1}(\mathbb{R}^{d})$, implies $\nabla L_F(\rho(t_n)) \to \nabla L_F(\rho_*)$ in the sense of distributions. Also, by Lemma~\ref{lem:subdiff.=} (i) and its proof, $\rho_*,\rho_*^m\in W^{1,1}_{\operatorname{loc}}(\R^d)$ and $\rho_*$ fulfills \eqref{nabla L_F}.

As $\sup_{n\in\N} \|v_\rho(t_n,\cdot)\|_{L^{2}(\mathbb{R}^{d}; \rho(t_n)\Lc^d)}^{2} <\infty$, we may also apply \cite[Theorem 5.4.4]{Ambrosio08}, which asserts (i) $\gamma_n := (\bm i\times v_{\rho}(t_n,\cdot))_\#(\rho(t_n)\Lc^d)$ has a weak limit $\gamma_*$ in $\Pc(\R^d\times\R^d)$; (ii) $v_\rho(t_n,\cdot)\in L^2(\R^d,\rho(t_n)\Lc^d)$ converges weakly to $v_*\in L^2(\R^d,\rho_*\Lc^d)$ in the sense of \eqref{weak converg. to xi}, where $v_*$ is the barycenter of $\gamma_*$ (see e.g., \cite[Definition 5.4.2]{Ambrosio08}); (iii) for any convex and lower semicontinuous $g:\R^d\to(-\infty,\infty]$, 
\[
\liminf_{n\to\infty} \int_{\R^d}g(v_\rho(t_n,y)) \rho(t_n,y) dy \ge \int_{\R^d} g(v_*(y)) \rho_*(y) dy. 
\]
By taking $g(\cdot) = |\cdot |^2$ in property (iii) and using \eqref{v L^2 to 0}, we obtain
\[
0 = \liminf_{n\to\infty} \|v_\rho(t_n,\cdot)\|_{L^{2}(\mathbb{R}^{d}; \rho(t_n)\Lc^d)}^{2} \ge \|v_*\|_{L^{2}(\mathbb{R}^{d}; \rho_*\Lc^d)}^{2} = \int_{\R^d} |v_*(y)|^2 \rho_*(y)\, dy,
\]
which entails $v_* = 0$ $\rho_*\Lc^d$-a.e. 
Using this and property (ii), we see that for any $\varphi \in C_{c}^{\infty}(\mathbb{R}^{d})$, 
 \begin{equation} \label{Eq: Preliminary to get formula of v*}
        \begin{aligned}
           0= \int_{\R^d} \varphi(y )v_{*}(y) \rho_*(y)\, dy &= \lim_{n \to \infty} \int_{\R^d} \varphi(y) v_\rho(t_n,y) \rho(t_n,y)\, dy \\
            &= -\lim_{n \to\infty} \int_{\R^d} \varphi(y) \Big( \nabla \Psi(y) \rho(t_n,y) + \nabla L_{F}(\rho(t_n,y))\Big) \, dy \\
            &= -\int_{\mathbb{R}^{d}} \varphi(y)  \Big(\nabla \Psi(y) \rho_{*}(y) + \nabla L_{F}(\rho_{*}(y))\Big) \, dy, 
        \end{aligned}        
    \end{equation}
where the third equality follows from \eqref{v} and \eqref{nabla L_F}, the fourth equality stems from $\rho(t_n)\Lc^d\to\rho_*\Lc^d$ weakly in $\Pc(\R^d)$ (note that $\varphi \nabla\Psi$ is bounded and continuous) and $\nabla L_F(\rho(t_n)) \to \nabla L_F(\rho_*)$ in the sense of distributions. 
It follows that
\begin{equation}\label{invariance}
\nabla \Psi(y) \rho_{*}(y) + \nabla L_{F}(\rho_{*}(y)) = 0 \quad \hbox{for $\Lc^d$-a.e.\ $y\in\R^d$}. 
\end{equation}

Now, we claim that $\rho_*>0$ $\Lc^d$-a.e. Thanks to \eqref{invariance}, it can be checked directly that the constant flow $\bar\rho: [0,\infty)\to\Dc(\R^d)$, with $\bar\rho(t) := \rho_*$ for all $t\ge 0$, fulfills \eqref{distri. sense}. As $\rho_*\in \Dc(\R^d)$ and $L_F(\rho_*)\in W^{1,1}_{\operatorname{loc}}(\R^d)$, we conclude that $\bar\rho: [0,\infty)\to\Dc(\R^d)$ is a solution to the Fokker-Planck equation \eqref{FPE mixed} with $\rho_0=\rho_*$ (recall Definition~\ref{def:sol. to FPE}). Furthermore, by $\rho_*\in L^m(\R^d)$ and \eqref{Psi' linear growth}, 
\begin{equation*}
\int_{\R^d} \frac{{ 2\lambda} + { 2\eta} \rho_*^{m-1}(y) + |y \cdot\nabla\Psi(y)|}{(1+|y|)^2} \rho_*(y) dy <\infty. 
\end{equation*}
Hence, we may apply the Figalli-Trevisan superposition principle (i.e., \cite[Theorem 1.1]{BRS21}) as in the proof of Theorem~\ref{thm:sol. to Y}, to obtain a filtered probability space $(\Omega,\cF, \{\cF_t\}_{t\ge 0} ,\P)$ where a solution to 
\begin{equation} \label{SDE mixed u^*}
    dY^*_{t} = - \nabla \Psi(Y^*_{t}) \, dt + \sqrt{{2\lambda} + {2\eta} \rho_*^{m-1}(Y^*_{t})} \, dB_{t}, \quad \rho^{Y_{0}} = \rho_* \in \mathcal{D}(\mathbb{R}^{d}),
\end{equation}
exists and it satisfies $\rho^{Y^*_t} = \rho_*$ for all $t\ge 0$. Consider the strictly positive processes $\sigma_t := \sqrt{{2\lambda} + {2\eta} \rho_*^{m-1}(Y^*_{t})}\ge \sqrt{2\lambda}>0$ and 
$
Z_t := \exp\big(-\int_0^t \frac{\nabla\Psi(Y^*_r)}{\sigma_s} dB_s - \frac12 \int_0^t  \frac{|\nabla\Psi(Y^*_r)|^2}{\sigma^2_r}  dr\big)$ for $t\ge 0.
$
For any $T>0$, \cite[Lemma 4.1.1]{Bensoussan-book-92} asserts $\E[Z_T] =0$, such that $Z$ is a martingale on $[0,T]$. It follows that $\Q(A):= \E[1_A Z_T]$, $A\in\mathcal B(\R^d)$, is a probability measure equivalent to $\P$ and the dynamics of $Y^*$ under $\Q$ is $dY^*_t = \sigma_t \, dB^\Q_{t}$, $t\in[0,T]$, where $B^\Q$ is a $\Q$-martingale. This implies $\Q(Y^*_T\in A)>0$ for all $A\in\mathcal B(\R^d)$ with $\Lc^d(A)>0$. For $A:=\{\rho_* =0\}$, we have $\P(Y^*_T\in A) = \int_{A}\rho_*(y) dy =0$. As $\Q$ is equivalent to $\P$, $\Q(Y^*_T\in A)=0$ must hold. This entails $\Lc^d(A) =0$, i.e., $\rho_* >0$ $\Lc^d$-a.e.

With $\rho_* >0$ $\Lc^d$-a.e., \eqref{invariance} implies $\nabla \Psi + \nabla L_{F}(\rho_{*})/\rho_{*} = 0$ $\Lc^d$-a.e. By \eqref{nabla L_F}, this means $\nabla \Psi+ \lambda \nabla \rho_{*}/\rho_* + \eta m \rho^{m-2}_{*} \nabla \rho_{*} = 0$ $\Lc^d$-a.e., or equivalently,
$
\nabla \big(\Psi +\lambda \ln\rho_* +\frac{\eta m}{m-1} \rho_*^{m-1} \big) =0\ \hbox{$\Lc^d$-a.e.} 
$
Thus, there exists $\bar C\in\R$ such that $\Psi(y) +\lambda \ln\rho_*(y) +\frac{\eta m}{m-1} \rho_*^{m-1}(y) = \bar C$ for $\Lc^d$-a.e.\ $y\in\R^d$. That is, $\rho_*$ fulfills \eqref{Eq: Equilibrium identity}, with $C = \bar C - \frac{\eta m}{m-1}$, for $\Lc^d$-a.e.\ $y\in\R^d$. The same argument in the proof of Lemma~\ref{lem:U} then implies $\rho_*(y) = U(y;C)$ for $\Lc^d$-a.e.\ $y\in\R^d$. As $\rho_*\in\Dc(\R^d)$, we must have $C=C^*$, as specified in Corollary~\ref{coro:rho_infty}. We therefore conclude $\rho_* = \rho_\infty$ $\Lc^d$-a.e.
\end{proof}

The ultimate main result is the following, resulting from Theorem~\ref{thm:convergence to rho_infty}, Proposition~\ref{prop:L^m norm bdd}, and Theorem~\ref{thm:uniqueness Y}. 

\begin{theorem}\label{thm:ultimate result}
Fix $\lambda\in(0,e/2)$, $\eta>0$, and $m>1$. Suppose that $\Psi\in C^2(\R^d)$ satisfies \eqref{Psi' linear growth}, \eqref{Psi increasing}, \eqref{>=log growth'}, $(\Delta \Psi)^+ \in L^{\infty}(\mathbb{R}^{d})$, and $\nabla\Psi$ is Lipschitz. For any $\rho_0\in \mathcal D_2(\R^d)$ such that $J_{\lambda,\eta}(\rho_0) <\infty$, there exists a solution $Y$ to \eqref{SDE mixed} such that $\theta(t) := \rho^{Y_t}\in\Dc_2(\R^d)$ fulfills \eqref{C condition} and \eqref{esssup L^m bdd}. Moreover, for any such a solution $Y$, there exists $\{t_n\}_{n\in\N}$ in $[0,\infty)$ with $t_n\uparrow \infty$ such that 
\[
\theta({t_n})\Lc^d \to \rho_\infty \Lc^d\quad \hbox{weakly in $\Pc(\R^d)$},\ \ \hbox{as $n\to\infty$},
\]
\end{theorem}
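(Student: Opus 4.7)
The plan is to assemble the theorem from the results that have already been established, in essentially two phases: existence plus verification of the structural properties \eqref{C condition} and \eqref{esssup L^m bdd}, then application of the uniqueness and convergence results to a general solution.

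First I would invoke Theorem~\ref{thm:sol. to Y}, whose hypotheses (namely $\Psi\in C^2(\R^d)$ with $(\Delta\Psi)^+\in L^\infty(\R^d)$, \eqref{Psi' linear growth}, and \eqref{Psi increasing}) are all in force, to produce a solution $Y$ to \eqref{SDE mixed} whose time-marginal densities $\theta(t):=\rho^{Y_t}$ coincide $\Lc^d$-a.e.\ with $\rho(t)$, where $\rho:[0,\infty)\to\Dc_2(\R^d)$ is the unique solution to the Fokker--Planck equation \eqref{FPE mixed} obtained in Theorem~\ref{thm:sol. to FPE}. From Theorem~\ref{thm:sol. to FPE} itself, $\rho$ satisfies $L_F(\rho(t))\in W^{1,1}_{\operatorname{loc}}(\R^d)$ for every $t\ge 0$ together with the square-integrability estimate \eqref{int v^2} for the associated velocity field $v_\rho$ in \eqref{v}, which is exactly condition \eqref{C condition}. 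Since $\nabla\Psi$ is Lipschitz and satisfies \eqref{Psi' linear growth}, the hypotheses of Proposition~\ref{prop:L^m norm bdd} are met, yielding the explicit bound
\begin{equation*}
\|\rho(t)\|_{L^m(\R^d)}\le e^{\frac{m-1}{m}At}\|\rho_0\|_{L^m(\R^d)},\qquad A=\|(\Delta\Psi)^+\|_{L^\infty(\R^d)},
\end{equation*}
which upon taking the essential supremum over any bounded interval $[0,T]$ gives \eqref{esssup L^m bdd}. Hence $\theta=\rho$ fulfills both \eqref{C condition} and \eqref{esssup L^m bdd}, proving the existence claim.

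Next, for the uniqueness-type statement in the ``moreover'' part, I would let $Y$ be \emph{any} solution to \eqref{SDE mixed} whose density flow $\theta(t)=\rho^{Y_t}$ satisfies \eqref{C condition} and \eqref{esssup L^m bdd}. The hypotheses of Theorem~\ref{thm:uniqueness Y} are satisfied (note that $\Psi\in C^2(\R^d)$ with $\nabla\Psi$ Lipschitz and $\nabla\Psi$ of linear growth are exactly what is needed there), so Theorem~\ref{thm:uniqueness Y} forces $\theta(t)=\rho(t)$ $\Lc^d$-a.e.\ for every $t\ge 0$, where again $\rho$ denotes the solution to \eqref{FPE mixed} obtained in Theorem~\ref{thm:sol. to FPE}. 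In other words, the density flow of any admissible solution $Y$ is necessarily the canonical one.

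Finally, with $\theta(t)=\rho(t)$ $\Lc^d$-a.e., the long-time behavior of $\theta$ reduces to the long-time behavior of $\rho$. Since all the hypotheses of Theorem~\ref{thm:convergence to rho_infty} are in force ($\Psi\in C^1(\R^d)$ holds a fortiori from $\Psi\in C^2(\R^d)$, \eqref{>=log growth'} and \eqref{Psi' linear growth} are assumed, and $\nabla\Psi$ is Lipschitz), Theorem~\ref{thm:convergence to rho_infty} provides a sequence $t_n\uparrow\infty$ along which $\rho(t_n)\Lc^d\to \rho_\infty\Lc^d$ weakly in $\Pc(\R^d)$, and consequently $\theta(t_n)\Lc^d\to\rho_\infty\Lc^d$ weakly in $\Pc(\R^d)$. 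There is no real obstacle here beyond bookkeeping the assumptions; the substantive work has already been done in Theorems~\ref{thm:sol. to Y}, \ref{thm:uniqueness Y}, and~\ref{thm:convergence to rho_infty}, so the present theorem is a direct synthesis.
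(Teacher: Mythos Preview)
Your proposal is correct and follows the same approach as the paper, which simply states that the theorem ``results from Theorem~\ref{thm:convergence to rho_infty}, Proposition~\ref{prop:L^m norm bdd}, and Theorem~\ref{thm:uniqueness Y}'' without further elaboration. Your write-up is more explicit (and rightly also invokes Theorem~\ref{thm:sol. to Y} for the existence part); the only minor slip is that the hypotheses of Proposition~\ref{prop:L^m norm bdd} are $\Psi\in C^2(\R^d)$, $(\Delta\Psi)^+\in L^\infty(\R^d)$, \eqref{Psi' linear growth}, and \eqref{Psi increasing} rather than the Lipschitz condition you cite, but all of these are assumed anyway.
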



\section{A Numerical Example}\label{sec:example}
We consider the double-well function $\Psi:\R\to\R$ introduced in \cite[Section 4]{GXZ22}, i.e., 
\begin{equation*}
\Psi(y) := \begin{cases}
            -12 y - 52, &\ \ y \in (-\infty,-6], \\
            2(y+3)^{2} + 2, &\ \ y \in (-6, 2], \\
            8-y^{2}, &\ \ y \in (2, 6], \\
            (y-4)^{2}, &\ \ y \in (2, 6], \\
            4y - 20, &\ \ y \in (6, \infty).           
       \end{cases}
       \qquad
\vcenter{\hbox{\begin{minipage}{7.3cm}
\centering
\includegraphics[width=7.3cm]{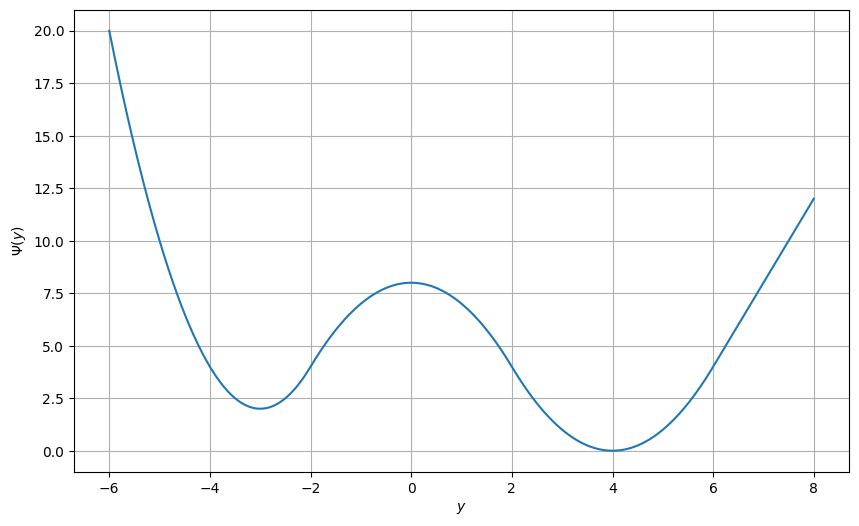}
\end{minipage}}}
\end{equation*}
To minimize $\Psi$, we will simulate the Langevin SDE \eqref{Langevin} and our proposed SDE \eqref{SDE mixed}. 
Let $\Delta t>0$ denote the time step and $\{ \xi_{i} \}_{i\in\N}$ be a sequence of independent and identically distributed standard normal random variables. The Euler-Maruyama schemes for \eqref{Langevin} and \eqref{SDE mixed} are, respectively, 
\begin{equation} \label{LD E-M Process}
    Y_{i+1} := Y_{i} - \Delta t \nabla \Psi(Y_{i}) + \sqrt{2 \lambda \Delta t} \xi_{i},
\end{equation}
and
\begin{equation} \label{Mixed E-M Process}
    Y_{i+1} := Y_{i} - \Delta t \nabla \Psi(Y_{i}) + \sqrt{(2 \lambda + 2 \eta (\rho^{Y_{i}}(Y_{i}))^{m-1}) \Delta t} \xi_{i}. 
\end{equation}
Similarly to \cite[Section 4]{GXZ22}, we initiate \eqref{LD E-M Process} and \eqref{Mixed E-M Process} at $y=-3$, the suboptimal local minimum of $\Psi$, perform the simulation up to $i=500$, and repeat the procedure 500 times. Note that in \eqref{Mixed E-M Process}, the density $\rho^{Y_i}$ is approximated by kernel density estimation. The performance of the algorithms is evaluated by $\{\E[\Psi(Y_i)]\}_{i=1}^{500}$, approximated by the sample average of the 500 simulated paths. 

Figure~\ref{fig: numerical results} presents the result: the blue curve is obtained from \eqref{LD E-M Process} with $\lambda = 500*(0.5)^{10} \approx 0.4883$ and $\Delta t = 0.5$, which are the best performing parameters reported in \cite[Section 4]{GXZ22}; other curves are obtained from \eqref{Mixed E-M Process} with $\lambda$ and $\Delta t$ as above, $\eta = 500*(0.5)^{9} \approx 0.9766$, and $m = 2, 5, 8, 10$. Clearly, $\E[\Psi(Y_i)]$ decays much faster under our algorithm \eqref{Mixed E-M Process} (for any $m$ value) than under the Langevin algorithm \eqref{LD E-M Process}. Moreover, as $m$ increases, the decay of  $\E[\Psi(Y_i)]$ accelerates and it eventually converges to a lower value (particularly, lower than that achieved by \eqref{LD E-M Process}). 

Under \eqref{LD E-M Process}, there is a tradeoff between the convergence rate and quality of the long-time limit. With a smaller $\lambda$, as the limiting distribution $\rho_\lambda$ in \eqref{rholambda} concentrates more on the global minimizer of $\Psi$, the long-time limit of $\E[\Psi(Y_i)]$ is generally smaller. It may yet take much longer to achieve this better (smaller) limit, as the convergence rate is exponential in $1/\lambda$; see \cite{BEGK04, BGK05}. 
Intriguingly, 
Figure~\ref{fig: numerical results} suggests that under \eqref{Mixed E-M Process}, one can {\it simultaneously} improve the convergence rate and quality of the long-time limit, by tuning location-wise random perturbation: a larger $m$ makes $(\rho^{Y_{i}}(\cdot))^{m-1}$ more sensitive to the landscape imposed by $\Psi$, rendering the simultaneous improvement. 
\begin{figure}
    \centering
    \includegraphics[width=0.6\linewidth]{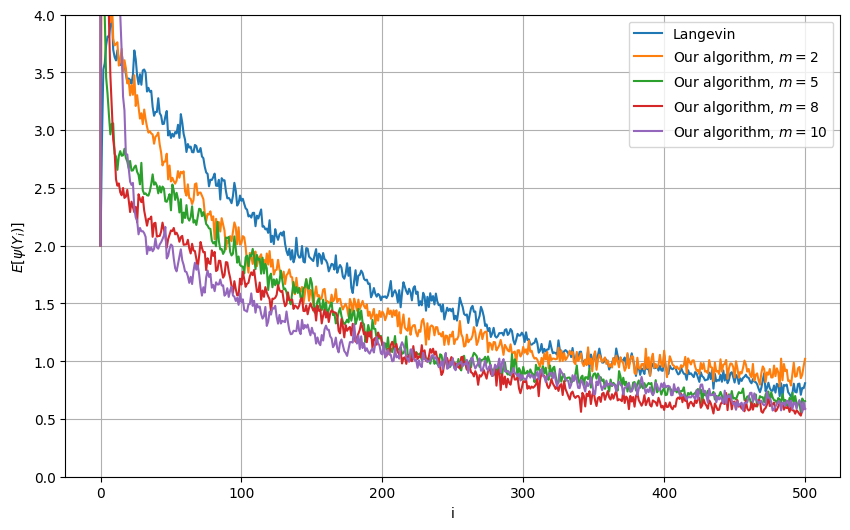}
    \caption{Performance of the tested algorithms.}
    \label{fig: numerical results}
\end{figure}





\appendix

\section{Proof of Lemma~\ref{lem:doubling}}\label{sec:proof of lem:doubling}
First, fix any $\lambda\in (0,1/\ln 2]$. Note that ``$H$ satisfies \eqref{doubling} for some $C>0$'' is equivalent to
\begin{equation}\label{G(x,y)}
\exists C>0\ \ \hbox{s.t.}\ \ G(x,y) := \frac{(x+y)^{\lambda(x+y)}}{x^{\lambda C x} y^{\lambda C y}} \le e^C\quad \forall x,y\ge 0.
\end{equation}
Let us first find the global maximum of $G(x,y)$. The first-order condition, i.e., 
$G_x(x,y) = \lambda G(x,y) \big(\ln(x+y)-C\ln x +1-C\big) = 0$ and 
$G_y(x,y) = \lambda G(x,y) \big(\ln(x+y)-C\ln y +1-C\big) = 0$,
reduces to $e^{C-1} x^C = e^{C-1} y^C = x+y$, which has a unique solution $x^*=y^* = 2^{\frac{1}{C-1}} e^{-1}$. Then, direct calculations yield
$
G_{xx}(x^*,y^*) = G_{yy}(x^*,y^*) = \big(\frac12-C\big)\frac{\lambda}{x^*} G(x^*,x^*)$ and $G_{xy}(x^*,y^*) = \frac{\lambda}{2x^*} G(x^*,x^*). 
$
Let $\mathcal H(x^*,y^*)$ denote the Hessian matrix of $G$ evaluated at $(x^*,y^*)$. Observe that for any $C>1$, we have $G_{xx}(x^*,y^*)<0$ and 
$
\hbox{det}(\mathcal H(x^*,y^*)) = G_{xx}(x^*,y^*) G_{yy}(x^*,y^*) - (G_{xy}(x^*,y^*))^2 = -\big(\frac{\lambda}{x^*}\big)^2 G^2(x^*,x^*) (1-C) C>0,
$
i.e., $\mathcal H(x^*,y^*)$ is negative definite. This implies that for any $C>1$, $(x^*,y^*)$ is the unique maximizer of $G(x,y)$. Thus, 
$G(x,y) \le G(x^*,y^*) = \exp\big({(C-1)\lambda 2^{\frac{C}{C-1}}e^{-1}}\big)$ for all $x,y\ge 0$. 
Hence, to prove \eqref{G(x,y)}, it suffices to choose $C>1$ such that $(C-1)\lambda 2^{\frac{C}{C-1}}e^{-1}\le C$, or equivalently, 
$
\lambda \le e f\big(\frac{C}{C-1}\big)$ with $f(z) := z/{2^{z}}$ for $z\ge 0.
$
Note that $f(z)$ achieves its maximum $e^{-1}/\ln 2$ at $z^*= 1/\ln 2 >1$. Hence, by taking $C=\frac{1}{1-\ln 2}>1$, the desired relation becomes $\lambda\le e\big(\frac{C}{C-1}\big) = e f(z^*) = 1/\ln 2$, which holds due to our choice $\lambda\in (0,1/\ln 2]$. 

Now, fix any $\lambda\in (0,e/2]$, $\eta>0$, and integer $m \ge 2$. For any $x,y\ge 0$, we first recall the relation 
\begin{equation}\label{(x+y)^m}
(x+y)^m \le (2^m-1)(x^m+y^m). 
\end{equation}
Take $C_1 = \frac{1}{1-\ln 2}$ and $C_2 = 2^m-1$. By the definition of $F$ in \eqref{F}, 
\begin{align*}
F&(x+y) = H(x+y) +\frac{\eta}{m-1} (x+y)^m \le C_1(1+H(x)+H(y)) + C_2 \bigg(\frac{\eta}{m-1} x^m + \frac{\eta}{m-1} y^m\bigg)\\
&\le (C_1\vee C_2) \left(1+H(x)+H(y) + \frac{\eta}{m-1} x^m + \frac{\eta}{m-1} y^m\right) = (C_1\vee C_2) (1+ F(x)+F(y)), 
\end{align*}
where the first inequality follows from ``$H$ satisfies \eqref{doubling} with $C=C_1$, for any $\lambda\in (0,1/\ln 2]$'' (Here, $\lambda\le e/2 <\ln 2$) and \eqref{(x+y)^m}, and the second inequality holds relying partly on the nonnegativity of $1+H(x)+H(y)$ (Indeed, as $\lambda \le e/2$, $1+H(x)+H(y)= 1 + \lambda x\ln x +\lambda y\ln y\ge 1-2\lambda/e\ge 0$). 

\small{
\bibliographystyle{siam}
\bibliography{bibliography}

\begin{thebibliography}{10}

\bibitem{Ambrosio24}
{\sc L.~Ambrosio, E.~Bru\'e, and D.~Semola}, {\em Lectures on {O}ptimal
  {T}ransport}, vol.~169 of Unitext, Springer, Cham, nd~ed., 2024.
\newblock La Matematica per il 3+2.

\bibitem{Ambrosio-book-00}
{\sc L.~Ambrosio, N.~Fusco, and D.~Pallara}, {\em Functions of bounded
  variation and free discontinuity problems}, Oxford Mathematical Monographs,
  The Clarendon Press, Oxford University Press, New York, 2000.

\bibitem{Ambrosio08}
{\sc L.~Ambrosio, N.~Gigli, and G.~Savar\'{e}}, {\em Gradient flows in metric
  spaces and in the space of probability measures}, Lectures in Mathematics ETH
  Z\"{u}rich, Birkh\"{a}user Verlag, Basel, second~ed., 2008.

\bibitem{BR20}
{\sc V.~Barbu and M.~R\"ockner}, {\em From nonlinear {F}okker-{P}lanck
  equations to solutions of distribution dependent {SDE}}, Ann. Probab., 48
  (2020), pp.~1902--1920.

\bibitem{BR21}
\leavevmode\vrule height 2pt depth -1.6pt width 23pt, {\em Solutions for
  nonlinear {F}okker-{P}lanck equations with measures as initial data and
  {M}c{K}ean-{V}lasov equations}, J. Funct. Anal., 280 (2021), pp.~Paper No.
  108926, 35.

\bibitem{BR23-evolution}
\leavevmode\vrule height 2pt depth -1.6pt width 23pt, {\em The evolution to
  equilibrium of solutions to nonlinear {F}okker-{P}lanck equation}, Indiana
  Univ. Math. J., 72 (2023), pp.~89--131.

\bibitem{BR23}
\leavevmode\vrule height 2pt depth -1.6pt width 23pt, {\em Uniqueness for
  nonlinear {F}okker-{P}lanck equations and for {M}c{K}ean-{V}lasov {SDE}s: the
  degenerate case}, J. Funct. Anal., 285 (2023), pp.~Paper No. 109980, 37.

\bibitem{BR-book-24}
\leavevmode\vrule height 2pt depth -1.6pt width 23pt, {\em Nonlinear
  {F}okker-{P}lanck flows and their probabilistic counterparts}, vol.~2353 of
  Lecture Notes in Mathematics, Springer, Cham, 2024.

\bibitem{Bensoussan-book-92}
{\sc A.~Bensoussan}, {\em Stochastic control of partially observable systems},
  Cambridge University Press, Cambridge, 1992.

\bibitem{BH86}
{\sc M.~Bertsch and D.~Hilhorst}, {\em A density dependent diffusion equation
  in population dynamics: stabilization to equilibrium}, SIAM J. Math. Anal.,
  17 (1986), pp.~863--883.

\bibitem{BRS21}
{\sc V.~I. Bogachev, M.~R\"ockner, and S.~V. Shaposhnikov}, {\em On the
  {A}mbrosio-{F}igalli-{T}revisan superposition principle for probability
  solutions to {F}okker-{P}lanck-{K}olmogorov equations}, J. Dynam.
  Differential Equations, 33 (2021), pp.~715--739.

\bibitem{BMV24}
{\sc J.~Bolte, L.~Miclo, and S.~Villeneuve}, {\em Swarm gradient dynamics for
  global optimization: the mean-field limit case}, Math. Program., 205 (2024),
  pp.~661--701.

\bibitem{BEGK04}
{\sc A.~Bovier, M.~Eckhoff, V.~Gayrard, and M.~Klein}, {\em Metastability in
  reversible diffusion processes. {I}. {S}harp asymptotics for capacities and
  exit times}, J. Eur. Math. Soc., 6 (2004), pp.~399--424.

\bibitem{BGK05}
{\sc A.~Bovier, V.~Gayrard, and M.~Klein}, {\em Metastability in reversible
  diffusion processes. {II}. {P}recise asymptotics for small eigenvalues}, J.
  Eur. Math. Soc., 7 (2005), pp.~69--99.

\bibitem{CD-book-18-I}
{\sc R.~Carmona and F.~Delarue}, {\em Probabilistic theory of mean field games
  with applications. {I}}, Springer, Cham, 2018.

\bibitem{Carrillo01}
{\sc J.~A. Carrillo, A.~J\"ungel, P.~A. Markowich, G.~Toscani, and
  A.~Unterreiter}, {\em Entropy dissipation methods for degenerate parabolic
  problems and generalized {S}obolev inequalities}, Monatsh. Math., 133 (2001),
  pp.~1--82.

\bibitem{CHS87}
{\sc T.-S. Chiang, C.-R. Hwang, and S.~J. Sheu}, {\em Diffusion for global
  optimization in {${\bf R}^n$}}, SIAM J. Control Optim., 25 (1987),
  pp.~737--753.

\bibitem{Corless1996}
{\sc R.~M. Corless, G.~H. Gonnet, D.~E.~G. Hare, D.~J. Jeffrey, and D.~E.
  Knuth}, {\em On the {L}ambert {$W$} function}, Adv. Comput. Math., 5 (1996),
  pp.~329--359.

\bibitem{DiB82}
{\sc E.~DiBenedetto}, {\em Continuity of weak solutions to certain singular
  parabolic equations}, Ann. Mat. Pura Appl. (4), 130 (1982), pp.~131--176.

\bibitem{Dupuis97}
{\sc P.~Dupuis and R.~S. Ellis}, {\em A weak convergence approach to the theory
  of large deviations}, Wiley Series in Probability and Statistics: Probability
  and Statistics, John Wiley \& Sons, Inc., New York, 1997.
\newblock A Wiley-Interscience Publication.

\bibitem{FQG97}
{\sc H.~Fang, M.~Qian, and G.~Gong}, {\em An improved annealing method and its
  large-time behavior}, Stochastic Process. Appl., 71 (1997), pp.~55--74.

\bibitem{GXZ22}
{\sc X.~Gao, Z.~Q. Xu, and X.~Y. Zhou}, {\em State-dependent temperature
  control for {L}angevin diffusions}, SIAM J. Control Optim., 60 (2022),
  pp.~1250--1268.

\bibitem{GM91}
{\sc S.~B. Gelfand and S.~K. Mitter}, {\em Recursive stochastic algorithms for
  global optimization in {${\bf R}^d$}}, SIAM J. Control Optim., 29 (1991),
  pp.~999--1018.

\bibitem{GH86}
{\sc S.~Geman and C.-R. Hwang}, {\em Diffusions for global optimization}, SIAM
  J. Control Optim., 24 (1986), pp.~1031--1043.

\bibitem{HRZ21}
{\sc Z.~Hao, M.~R\"ockner, and X.~Zhang}, {\em Euler scheme for density
  dependent stochastic differential equations}, J. Differential Equations, 274
  (2021), pp.~996--1014.

\bibitem{HKS89}
{\sc R.~A. Holley, S.~Kusuoka, and D.~W. Stroock}, {\em Asymptotics of the
  spectral gap with applications to the theory of simulated annealing}, J.
  Funct. Anal., 83 (1989), pp.~333--347.

\bibitem{Hoorfar2008}
{\sc A.~Hoorfar and M.~Hassani}, {\em Inequalities on the {L}ambert {$W$}
  function and hyperpower function}, JIPAM. J. Inequal. Pure Appl. Math., 9
  (2008), pp.~Article 51, 5.

\bibitem{Huang24}
{\sc Y.-J. Huang and Z.~Malik}, {\em Generative modeling by minimizing the
  {W}asserstein-2 loss},  (2024).
\newblock Preprint, available at https://arxiv.org/abs/2406.13619.

\bibitem{Huang23}
{\sc Y.-J. Huang and Y.~Zhang}, {\em {GAN}s as gradient flows that converge},
  Journal of Machine Learning Research, 24 (2023), pp.~1--40.

\bibitem{Hwang80}
{\sc C.-R. Hwang}, {\em Laplace's method revisited: weak convergence of
  probability measures}, Ann. Probab., 8 (1980), pp.~1177--1182.

\bibitem{Hwang24}
{\sc S.~Hwang, K.~Kang, and H.~K. Kim}, {\em Existence of weak solutions for
  porous medium equation with a divergence type of drift term}, Calc. Var.
  Partial Differential Equations, 62 (2023), p.~Paper No. 126.

\bibitem{IR23}
{\sc E.~Issoglio and F.~Russo}, {\em Mc{K}ean {SDE}s with singular
  coefficients}, Ann. Inst. Henri Poincar\'e{} Probab. Stat., 59 (2023),
  pp.~1530--1548.

\bibitem{JK96}
{\sc R.~Jordan and D.~Kinderlehrer}, {\em An extended variational principle},
  in Partial differential equations and applications, vol.~177 of Lecture Notes
  in Pure and Appl. Math., Dekker, New York, 1996, pp.~187--200.

\bibitem{JM98}
{\sc B.~Jourdain and S.~M\'el\'eard}, {\em Propagation of chaos and
  fluctuations for a moderate model with smooth initial data}, Ann. Inst. H.
  Poincar\'e{} Probab. Statist., 34 (1998), pp.~727--766.

\bibitem{KS-book-91}
{\sc I.~Karatzas and S.~E. Shreve}, {\em Brownian motion and stochastic
  calculus}, vol.~113 of Graduate Texts in Mathematics, Springer-Verlag, New
  York, second~ed., 1991.

\bibitem{Le24}
{\sc A.-D. Le}, {\em Well-posedness of {M}c{K}ean-{V}lasov {SDE}s with
  density-dependent drift},  (2024).
\newblock Preprint, available at https://arxiv.org/abs/2404.19499.

\bibitem{Marquez97}
{\sc D.~M\'arquez}, {\em Convergence rates for annealing diffusion processes},
  Ann. Appl. Probab., 7 (1997), pp.~1118--1139.

\bibitem{MN01}
{\sc T.~Munakata and Y.~Nakamura}, {\em Temperature control for simulated
  annealing}, Phys. Rev. E Stat. Nonlin. Soft Matter Phys., 64 (2001),
  p.~046127.

\bibitem{RR23}
{\sc M.~Rehmeier and M.~R\"{o}ckner}, {\em Nonlinear
  fokker–planck–kolmogorov equations as gradient flows on the space of
  probability measures},  (2023).
\newblock Preprint, available at https://arxiv.org/abs/2306.09530.

\bibitem{SV-book-06}
{\sc D.~W. Stroock and S.~R.~S. Varadhan}, {\em Multidimensional diffusion
  processes}, Classics in Mathematics, Springer-Verlag, Berlin, 2006.
\newblock Reprint of the 1997 edition.

\bibitem{Tao2011}
{\sc T.~Tao}, {\em An introduction to measure theory}, vol.~126 of Graduate
  Studies in Mathematics, American Mathematical Society, Providence, RI, 2011.

\bibitem{Trevisan16}
{\sc D.~Trevisan}, {\em Well-posedness of multidimensional diffusion processes
  with weakly differentiable coefficients}, Electron. J. Probab., 21 (2016),
  pp.~Paper No. 22, 41.

\bibitem{Vazquez15}
{\sc J.~L. V\'azquez}, {\em Fundamental solution and long time behavior of the
  porous medium equation in hyperbolic space}, J. Math. Pures Appl. (9), 104
  (2015), pp.~454--484.

\bibitem{Villani-book-09}
{\sc C.~Villani}, {\em Optimal transport}, vol.~338 of Grundlehren der
  mathematischen Wissenschaften [Fundamental Principles of Mathematical
  Sciences], Springer-Verlag, Berlin, 2009.
\newblock Old and new.

\bibitem{Wang23}
{\sc F.-Y. Wang}, {\em Singular density dependent stochastic differential
  equations}, J. Differential Equations, 361 (2023), pp.~562--589.

\end{thebibliography}
}
\end{document}